%
%
\documentclass[12pt]{amsart}
\usepackage{amssymb, amsmath, amsthm, latexsym, array,euscript}
\usepackage{fullpage}
\usepackage{verbatim}
\usepackage{tikz}
\usetikzlibrary{arrows,shapes,trees}
\usepackage[scr=boondoxo,scrscaled=1.01]{mathalfa}

\usepackage{xcolor}
\definecolor{darkblue}{rgb}{0,0,.5}
\definecolor{darkgreen}{rgb}{.2,0.5,.2}
\usepackage[colorlinks=true,linkcolor=darkblue,urlcolor=violet,citecolor=magenta]{hyperref}

\tolerance=3000
\numberwithin{equation}{section}


\setcounter{tocdepth}{1}

\input{cyracc.def}

\font\tencyr=wncyr10 
\font\tencyi=wncyi10 
\font\tencysc=wncysc10 

\def\rus{\tencyr\cyracc}
\def\rusi{\tencyi\cyracc}
\def\rusc{\tencysc\cyracc}

\newtheorem{thm}{Theorem}[section]
\newtheorem{conj}[thm]{Conjecture}
\newtheorem{lm}[thm]{Lemma}
\newtheorem{cl}[thm]{Corollary}
\newtheorem{prop}[thm]{Proposition}

\theoremstyle{remark}
\newtheorem{ex}[thm]{Example}
\newtheorem{rmk}[thm]{Remark}

\theoremstyle{definition}
\newtheorem{df}{Definition}



\newcommand {\be}{{\mathfrak b}}
\newcommand {\ce}{{\mathfrak c}}

\newcommand {\g}{{\mathfrak g}}
\newcommand {\h}{{\mathfrak h}}

\newcommand {\el}{{\mathfrak l}}

\newcommand {\n}{{\mathfrak n}}
\newcommand {\N}{{\mathfrak N}}
\newcommand {\p}{{\mathfrak p}}
\newcommand {\q}{{\mathfrak q}}
\newcommand {\rr}{{\mathfrak r}}
\newcommand {\es}{{\mathfrak s}}
\newcommand {\te}{{\mathfrak t}}
\newcommand {\ut}{{\mathfrak u}}


\newcommand {\sln}{{\mathfrak {sl}}_n}

\newcommand {\spn}{{\mathfrak {sp}}_{2n}}

\newcommand {\sone}{{\mathfrak {so}}_{2n}}

\newcommand {\soN}{{\mathfrak {so}}_{N}}
\newcommand {\soVB}{{\mathfrak {so}}(\VV, \caB)}


\newcommand {\eus}{\EuScript}

\newcommand {\gA}{{\eus A}}

\newcommand {\gN}{{\eus N}}

\newcommand {\gS}{{\eus S}}
\newcommand {\gZ}{{\eus Z}}


\newcommand {\caB}{{\mathscr{B}}}
\newcommand {\caL}{{\mathscr{L}}}

\newcommand {\esi}{\varepsilon}
\newcommand {\ap}{\alpha}

\newcommand {\lb}{\lambda}
\newcommand {\vp}{\varphi}
\newcommand {\vth}{\vartheta}

\newcommand {\bia}{{\un{\boldsymbol{i}}}}
\newcommand {\bja}{{\un{\boldsymbol{j}}}}

\newcommand {\ov}{\overline}
\newcommand {\un}{\underline}


\newcommand{\gt}{\mathfrak}

\newcommand{\SL}{{\rm SL}}
\newcommand{\GL}{{\rm GL}}

\newcommand{\Aut}{\mathsf{Aut}}

\newcommand{\ad}{\mathrm{ad}}

\newcommand {\hot}{{\mathsf{ht}}}

\newcommand{\ind}{{\rm ind\,}}

\newcommand{\rk}{\mathrm{rk\,}}

\newcommand{\Lie}{\mathsf{Lie}}

\newcommand{\Op}{\mathrm{O}}




\newcommand {\co}{{\mathcal O}}

\newcommand {\cz}{{\mathcal Z}}

\newcommand {\trdeg}{{\mathrm{tr.deg\,}}}
\newcommand {\ggs}{{\sf g.g.s.{}}}

\newcommand {\bbk}{{\Bbbk}}

\newcommand {\VV}{{\mathsf V}}

\newcommand {\md}{/\!\!/}

\newcommand {\GR}[2]{{\textrm{{\sf\bfseries #1}}}_{#2}}
\newcommand {\GRt}[2]{ {\widetilde{\textrm{\sf\bfseries #1}}}_{#2} }

\newcommand {\BZ}{{\mathbb Z}}
\newcommand {\BN}{{\mathbb N}}
\newcommand {\beq}{\begin{equation}}
\newcommand {\eeq}{\end{equation}}

\newcommand {\rg}{{\rangle}}
\renewcommand {\lg}{{\langle}}

\renewcommand{\le}{\leqslant}
\renewcommand{\ge}{\geqslant}


\newcommand {\BP}{{\mathbb P}}

\newcommand{\bb}{\boldsymbol{b}}

\newcommand{\Ord}{\mathsf{ord}(\sigma)}

\begin{document}
\hfill {\scriptsize November 19, 2022}
\vskip1ex

\title[Some remarks on periodic contractions]{Automorphisms of finite order, periodic 
contractions, and Poisson-commutative subalgebras of $\gS(\g)$}
\author[D.\,Panyushev]{Dmitri I. Panyushev}
\address[D.P.]%
{Institute for Information Transmission Problems of the R.A.S.,  
Moscow 127051, Russia}
\email{panyushev@iitp.ru}
\author[O.\,Yakimova]{Oksana S.~Yakimova}
\address[O.Y.]{Institut f\"ur Mathematik, Friedrich-Schiller-Universit\"at Jena,  07737 Jena,
Deutschland}
\email{oksana.yakimova@uni-jena.de}
\thanks{The research of the first author is supported by the R.F.B.R. grant {\rus N0} 20-01-00515.
The second author is funded by the DFG (German Research Foundation) --- project number 404144169.}
\keywords{index of Lie algebra, contraction, commutative subalgebra, symmetric invariants}
\subjclass[2010]{17B63, 14L30, 17B08, 17B20, 22E46}
\dedicatory{To Victor Kac with admiration  }
\begin{abstract}
Let $\g$ be a semisimple Lie algebra, $\vth\in {\sf Aut}(\g)$ a finite order automorphism, and $\g_0$ the subalgebra of fixed points of $\vth$. Recently, we noticed that using $\vth$ one can construct a pencil of
compatible Poisson brackets on $\gS(\g)$, and thereby a `large' Poisson-commutative subalgebra 
$\gZ(\g,\vth)$ of $\gS(\g)^{\g_0}$. In this article, we study invariant-theoretic properties of $(\g,\vth)$
that ensure good properties of $\gZ(\g,\vth)$. Associated with $\vth$ one has a natural Lie algebra 
contraction $\g_{(0)}$ of $\g$ and the notion of a {\it good generating system} (=g.g.s.) in 
$\gS(\g)^\g$. We prove that in many cases the equality $\ind\g_{(0)}=\ind\g$ holds
and $\gS(\g)^\g$ has a {g.g.s.} According to V.G.\,Kac's classification of finite order automorphisms (1969), 
$\vth$ can be represented by a Kac diagram, $\eus K(\vth)$, and our results often use this presentation. The most surprising observation is that $\g_{(0)}$ depends only on the set of nodes in $\eus K(\vth)$ with nonzero labels, and that if $\vth$ is inner and a certain label is nonzero, then $\g_{(0)}$ is isomorphic to 
a parabolic contraction of $\g$.  
\end{abstract}
\maketitle

\tableofcontents
\section{Introduction}   \label{sect:intro}

\subsection{}
Completely integrable Hamiltonian systems on symplectic algebraic varieties are fundamental objects having a 
rich structure. They have been extensively studied  from different points of view in various areas of mathematics such as 
differential geometry, classical mechanics, algebraic and Poisson geometries, and more recently, representation theory. 
A natural choice for the underlying variety is a coadjoint orbit of an algebraic Lie algebra $\q$.  
In this context, one may obtain an integrable system from a {\it Poisson 
commutative} ({=}{\sf PC}) subalgebra of the symmetric 
algebra $\gS(\q)$. 
As is well-known,  $\gS(\q)$ has the standard Lie--Poisson structure $\{\ ,\,\}$.

In this paper, the base field $\bbk$ is algebraically closed, ${\sf char}\,\bbk=0$, and 
$\g$ is the  Lie algebra 
of a connected reductive algebraic group $G$. 
Let $\eus U(\g)$ be the enveloping algebra of $\g$. 
We are interested in {\sf PC} subalgebras of $\gS(\g)^\h$, where $\h=\Lie(H)$ and 
$H\subset G$ is a connected reductive subgroup. 
These subalgebras are closely related to 
commutative subalgebras of ${\eus U}(\g)^{\h}$ and thereby to branching rules involving $G$ and $H$, see \cite[Sect.\,6.1]{OY} for some examples. Note also that the centre of ${\eus U}(\g)^{\h}$ is described in~\cite[Theorem\,10.1]{knop}.

Whenever a PC subalgebra of $\gS(\g)^\h$ is large enough, one extends it to a PC subalgebra of $\gS(\g)$, which provides 
completely integrable systems on generic orbits. This idea is employed in 
\cite{gs1,gs2}, where the foundation of a beautiful geometric theory has also been laid.  

The Lenard--Magri scheme provides a method for constructing ``large'' {\sf PC} subalgebras via 
compatible Poisson brackets. Let $\{\ ,\,\}'$ be another Poisson bracket on $\gS(\g)$ compatible with 
$\{\ ,\,\}$ and $\{\ ,\,\}_t= \{\ ,\,\}+ t\{\ ,\,\}'$. Using the centres of the Poisson algebras $(\gS(\g), \{\ ,\,\}_t)$ 
for regular values of  $t$, one obtains a {\sf PC} subalgebra $\gZ\subset \gS(\g)$,  
see Section~\ref{subs:compatible} for details. Here the main 
questions are:

\textbullet \quad  how to find/construct an appropriate compatible bracket $\{\ ,\,\}'$ ?

\textbullet \quad  what are the properties of {\sf PC} subalgebras $\gZ$ obtained?

\textbullet \quad  is it possible to quantise $\gZ$, i.e., lift it to $\eus U(\g)$ ?

\noindent
A well-known approach that exploits a Poisson bracket with a ``frozen'' argument as $\{\ ,\,\}'$ provides the Mishchenko--Fomenko subalgebras of $\gS(\g)$~\cite{bols}, and their quantisation is studied
in~\cite{rybn,fft,m-y,4-er}.

In recent articles~\cite{OY,bn-oy,py21}, we develop new methods for constructing $\{\ ,\,\}'$ and for studying the corresponding {\sf PC} subalgebras $\gZ$. 

{\bf (A)} In~\cite{OY}, we prove that any involution of $\g$ yields a compatible Poisson bracket on 
$\gS(\g)$ and consider the related {\sf PC} subalgebras of $\gS(\g)$. A generalisation of this approach to 
$\vth\in {\sf Aut}(\g)$ of arbitrary finite order is presented in~\cite{py21}. The latter heavily relies on 
Invariant Theory of $\vth$-groups developed by E.B.\,Vinberg in~\cite{vi76}.

{\bf (B)}  In~\cite{bn-oy}, we study compatible Poisson brackets related to a vector space sum 
$\g=\rr\oplus\h$, where $\rr,\h$ are subalgebras of $\g$. To expect some good properties of $\gZ$, one 
has to assume here that at least one of the subalgebras is spherical in $\g$.

In both cases, we get two compatible linear Poisson brackets $\{\ ,\,\}'$ and $\{\ ,\,\}''$ such that $\{\ ,\,\}=\{\ ,\,\}'+\{\ ,\,\}''$ is the initial Lie--Poisson structure and study the pencil of Poisson brackets
\[
    \text{$\{\ ,\,\}_{t}=\{\ ,\,\}'+t\{\ ,\,\}''$, \quad $t\in \BP^1=\bbk\cup\{\infty\}$}, 
\]
where $\{\ ,\,\}_{\infty}=\{\ ,\,\}''$. Each bracket $\{\ ,\,\}_{t}$ provides a Lie algebra structure on the 
vector space $\g$, denoted by $\g_{(t)}$. The brackets with $t\in\bbk^*{:}{=}\bbk\setminus\{0\}$ 
comprise Lie algebras isomorphic to $\g=\g_{(1)}$, while the Lie algebras $\g_{(0)}$ and $\g_{(\infty)}$ 
are different. Since both are contractions of the initial Lie algebra $\g$, we have 
$\ind\g_{(0)}\ge \ind\g$ and $\ind\g_{(\infty)}\ge \ind\g$.
\\ \indent
In case {\bf (A)}, the role of the Lie algebras $\g_{(0)}$ and $\g_{(\infty)}$ is not symmetric. The algebra 
$\g_{(\infty)}$ is nilpotent, while a maximal reductive subalgebra of $\g_{(0)}$ is $\g^\vth$. Roughly 
speaking, the output of \cite{OY,py21} is that in order to expect some good properties of the {\sf PC}
subalgebra $\gZ=\gZ(\g,\vth)$, one needs (at least) the following two properties of $\vth$:
\begin{itemize}
\item[\sf (i)] \ $\ind\g_{(0)}= \ind\g$;
\item[\sf (ii)] \ the algebra $\gS(\g)^\g$ contains a {\it good generating system} ({\sf g.g.s.}) with
respect to $\vth$, see Section~\ref{subs:theta} for details. (Then we also say that $\vth$ {\it admits\/} 
a {\sf g.g.s.})
\end{itemize}
The Lie algebra $\g_{(0)}$ is said to be the $\vth$-{\it contraction\/} or a {\it periodic contraction\/} of $\g$.

\subsection{} This article is a sequel to~\cite{py21}. It is devoted to invariant-theoretic properties of 
a $\BZ_m$-graded simple Lie algebra $\g$, which is motivated by our study of {\sf PC} subalgebras of
$\gS(\g)$. We concentrate on proving 
{\sf (i)} and {\sf (ii)} for various types of $\g$ and $\vth\in{\sf Aut}(\g)$. Accordingly, we establish some
good properties of related {\sf PC} subalgebras. Let ${\sf Aut}^f(\g)$ (resp. ${\sf Int}^f(\g)$) be the set of 
all (resp. inner) automorphisms of $\g$ of finite order. For $\vth\in {\sf Aut}^f(\g)$, we also say that $\vth$ 
is {\it periodic}. Let $m=|\vth|$ be the order of $\vth$ and $\zeta=\sqrt[m]1$ a fixed primitive root of unity. If 
$\g_i$ is the eigenspace of $\vth$ corresponding to $\zeta^i$, then $\g=\bigoplus_{i=0}^{m-1}\g_i$ is the 
$\BZ_m$-grading of $\g$ associated with $\vth$. A classification of periodic automorphisms of $\g$ is due 
to V.\,Kac~\cite{k69}, and our results often invoke the {\it Kac diagram} of $\vth$. We refer 
to~\cite[\S\,8]{vi76}, \cite[Chap.\,3,\,\S\,3]{t41} and \cite[Ch.\,8]{kac}
for generalities on Kac's classification and the Kac 
diagrams. The Kac diagram of $\vth$, $\eus K(\vth)$, is an affine Dynkin diagram of $\g$ (twisted, if 
$\vth$ is outer) endowed with nonnegative integral labels. We recall the relevant setup and give an explicit construction of $\vth$ via 
$\eus K(\vth)$, see Sections~\ref{subs:kac}, \ref{sect:inner-auto}, and~\ref{sect:outer}.

Actually, Kac's classification stems from the study of $\BZ$-gradings of ``his'' infinite-dimensional Lie 
algebras \cite{k69}. Our recent results on $\g_{(0)}$ and $\gZ(\g,\vth)$ have applications to the infinite-dimensional 
case, too~\cite[Sect.\,8]{py21}. However, in this article, we do not refer explicitly to Kac--Moody algebras, 
which agrees with the  approach taken in \cite{t41}.

It is known that $\ind\g_{(0)}= \ind\g$, if $m=2$~\cite{p07} or $\g_1$ contains regular elements of 
$\g$~\cite{p09}. Here we prove equality {\sf (i)} for $\ind\g_{(0)}$ in the following cases:
\begin{enumerate}
\item either $m=3$ or $m=4,5$ and the $G_0$-action on $\g_1$ is stable, see 
Section~\ref{sect:index-g_0}; 
\item $\vth$ is inner and a certain label on the Kac diagram of $\vth$ is nonzero, see Theorem~\ref{thm:semidir} and Proposition~\ref{prop:parab&index};
\item $\vth$ is an arbitrary {\bf inner} automorphism of $\g=\sln$, see Proposition~\ref{prop:An}; 
\item $\vth\in {\sf Aut}^f(\spn)$ and $m$ is odd, see Proposition~\ref{prop:ind-sp-odd};
\item $\vth$ is an arbitrary automorphism of $\GR{G}{2}$ (Example~\ref{ex:G2}) or of $\gt{so}_N$, see Section~\ref{sect:son}.
\end{enumerate}
Our proofs for {\it\bfseries (3)}-{\it\bfseries (5)} rely on a new result that $\g_{(0)}$ depends only 
on the set of nodes in $\eus K(\vth)$ with nonzero labels, i.e., having replaced all nonzero labels with 
`1', one obtains the same periodic contraction $\g_{(0)}$, see Theorem~\ref{thm:change-to-1} 
(resp.~\ref{thm:change2}) for the inner (resp.~outer) automorphisms of $\g$. Another ingredient is that if 
$\vth$ is inner and a certain label on $\eus K(\vth)$ is nonzero, then the $\vth$-contraction $\g_{(0)}$ is
isomorphic to a {\it parabolic contraction\/} of $\g$ (Theorem~\ref{thm:semidir}). The theory of parabolic 
contraction is developed in~\cite{sel}, and an interplay between two types of contractions enriches our 
knowledge of {\sf PC} subalgebras in both cases. For instance, we prove that $\gZ(\sln,\vth)$ is polynomial for any $\vth\in {\sf Int}^f(\sln)$ (Theorem~\ref{thm:svob-alg-sl}).

Frankly, we believe the equality $\ind\g_{(0)}=\ind\g$ holds for any $\vth\in {\sf Aut}^f(\g)$, and it is a 
challenge to prove it in full generality. This equality can be thought of as a $\vth$-generalisation of the 
{\it Elashvili conjecture}. For, a possible proof would require to check that, for a nilpotent element $x\in\g_1$, one has $\ind(\g^{x})_{(0)}=\ind \g^x$,
cf.~Corollary~\ref{cor:2}. 

We say that $\vth\in {\sf Aut}^f(\g)$ is $\gN$-{\it regular}, if $\g_1$ contains a regular nilpotent element 
of $\g$. Properties of the $\gN$-{regular} automorphisms are studied in~\cite[\S\,3]{p05}. In particular, if a 
connected component of ${\sf Aut}(\g)$ contains elements of order $m$, then it contains a unique 
$G$-orbit of $\gN$-regular elements of order $m$. That is, there are sufficiently many $\gN$-regular 
automorphisms of $\g$. We prove that a {\sf g.g.s.} exists for the $\gN$-regular $\vth$, see 
Theorem~\ref{thm:main1}. 
Furthermore, if $\vth$ and $\vth'$ belong to the same connected component of ${\sf Aut}(\g)$,
$|\vth|=|\vth'|$,  $\dim\g^\vth=\dim\g^{\vth'}$, and $\vth$ is $\gN$-regular, then
$\vth'$ also admits a {\sf g.g.s.}~(Theorem~\ref{thm:main3}). 

Another interesting feature is that if $\vth$ is inner and $\gN$-regular, then at most one label on 
$\eus K(\vth)$ can be bigger that $1$~(Theorem~\ref{thm:p_i=0,1}). Moreover, if $|\vth|$ does not exceed
the Coxeter number of $\g$, then all Kac labels belong to $\{0,1\}$.

\section{Preliminaries on {\sf PC} subalgebras and periodic automorphisms}
\label{sect:prelim}

\subsection{Compatible Poisson brackets}  \label{subs:compatible}
Let $\q$ be an arbitrary algebraic Lie algebra. The {\it index\/} of $\q$, $\ind\q$, is the minimal dimension 
of the stabilisers of $\xi\in\q^*$ with respect to the coadjoint representation of $\q$. If $\q$ is reductive, 
then $\ind\q=\rk\q$.
Two Poisson brackets are said to be {\it compatible} if their sum is again a Poisson bracket.
Suppose that $\{\ ,\,\}_t= \{\ ,\,\}'+ t\{\ ,\,\}''$, $t\in\BP^1$, is a pencil of compatible linear Poisson brackets on $\gS(\q)$, 
where $\BP^1=\bbk\cup\{\infty\}$ and $\{\ ,\,\}_1$ is the initial Lie--Poisson structure on $\q$.

Let $\q_{(t)}$ denote the Lie algebra structure on the vector space $\q$ corresponding to $\{\ ,\,\}_t$.
The function $(t\in \BP^1)\mapsto \ind \q_{(t)}$ is upper semi-continuous and therefore is constant on a
dense open subset of $\BP^1$. This subset is denoted by $\BP_{\sf reg}$, and we set
$\BP_{\sf sing}=\BP^1\setminus \BP_{\sf reg}$.  Then $\BP_{\sf sing}$ is finite and
\[
   t_0\in \BP_{\sf sing} \ \Longleftrightarrow \ \ind \q_{(t_0)}>\min_{t\in\BP^1}\ind \q_{(t)} .
\]
Let $\cz_{t}$ be the centre of the Poisson algebra $(\gS(\q), \{\ ,\,\}_t)$ and $\gZ$ the subalgebra
of $\gS(\q)$ generated by all $\cz_{t}$ with $t\in\BP_{\sf reg}$. We also write
\[
      \gZ={\sf alg}\lg \cz_t\mid t\in \BP_{\sf reg}\rg.
\]
Then $\gZ$ is Poisson commutative with respect to {\bf any} bracket $\{\ ,\,\}_t$ with $t\in\BP^1$. In cases to be treated below, $1\in \BP_{\sf reg}$ and all but finitely 
many algebras $\q_{(t)}$ are isomorphic to $\q$.  Then one can prove that such a $\gZ$ is a
{\sf PC} subalgebra of maximal transcendence degree in an appropriate class of subalgebras of
$\gS(\q)$, see~\cite{OY,bn-oy}.

\subsection{Periodic automorphisms of $\g$ and related {\sf PC} subalgebras of $\gS(\g)$}
\label{subs:theta}
Suppose that $\g$ is reductive and $\vth\in {\sf Aut}^f(\g)$. Using $\vth$, one can construct a pencil 
$\{\ ,\,\}_t= \{\ ,\,\}_{(0)}+ t\{\ ,\,\}_{(\infty)}$ of compatible
linear Poisson brackets on $\gS(\g)$, see \cite{py21} and Section~\ref{sect:index-g_0}. This pencil and
the related {\sf PC} subalgebra $\gZ=\gZ(\g,\vth)$ have the following properties:
\begin{itemize}
\item \ the Lie algebras $\g_{(t)}$, $t\in \bbk\setminus \{0\}$, are isomorphic to $\g$ and hence
$\BP_{\sf sing}\subset \{0,\infty\}$;
\item \ $\infty\in \BP_{\sf reg}$ if and only if $\g_0:=\g^\vth$ is abelian~\cite[Theorem\,3.2]{py21};
\item \ $\gZ(\g,\vth)\subset \gS(\g)^{\g_0}$~\cite[(3.6)]{py21}.
\end{itemize}
By~\cite[Prop.\,1.1]{m-y}, if $\gA$ is a {\sf PC} subalgebra of $\gS(\g)^{\g_0}$, then
\[
   \trdeg \gA \le \frac{1}{2}(\dim\g-\dim\g_0+\rk\g+\rk\g_0)=: \bb(\g,\vth) .
\]
If $\g_0$ is abelian, then the right-hand side becomes $(\dim\g+\rk\g)/2=:\bb(\g)$.
\\  \indent
Recall that $\gZ(\g,\vth)$ is generated by the centres $\cz_t$ with $t\in\BP_{\sf reg}$.

\begin{thm}[{\cite[Theorem\,3.10]{py21}}]    \label{thm:2.1}
If\/ $\ind\g_{(0)}=\ind\g$ (i.e., $0\in\BP_{\sf reg}$), then $\trdeg \gZ(\g,\vth) =\bb(\g,\vth)$.
\end{thm}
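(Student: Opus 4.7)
The upper bound $\trdeg \gZ(\g,\vth)\le \bb(\g,\vth)$ is immediate: $\gZ(\g,\vth)\subset \gS(\g)^{\g_0}$ by the third bullet of Section~\ref{subs:theta}, and \cite[Prop.\,1.1]{m-y} supplies the bound. The task therefore reduces to exhibiting $\bb(\g,\vth)$ algebraically independent elements inside $\gZ(\g,\vth)$, using the hypothesis $\ind\g_{(0)}=\ind\g$ as the crucial input. Note that for every $t\in\BP_{\sf reg}\cap\bbk^*$ the Lie algebra $\g_{(t)}$ is isomorphic to $\g$, so $\trdeg\cz_t=\ind\g_{(t)}=\rk\g$; the hypothesis promotes this also to $t=0$, giving $\trdeg\cz_0=\rk\g$.

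The natural construction would take homogeneous generators $F_1,\dots,F_l$ of $\gS(\g)^\g$ (with $l=\rk\g$, $d_i=\deg F_i$) and produce components via a rescaling tied to the $\BZ_m$-grading. Concretely, writing $x\in\g^*$ as $x=\sum_j x_j$ with $x_j\in\g_j^*$, one rescales $x_j\mapsto t^{a_j}x_j$ with weights $a_j$ read off the grading, expands $F_i(x(t))$ as a polynomial in $t$, and collects the coefficients $F_i^{(k)}\in\gS(\g)$. Compatibility of $\{\ ,\,\}_{(0)}$ and $\{\ ,\,\}_{(\infty)}$ together with the $\g$-invariance of $F_i$ forces each $F_i^{(k)}$ to be a Casimir of an appropriate bracket in the pencil, placing all of them inside $\gZ(\g,\vth)$; in particular, the leading coefficient lies in $\cz_0$ and the extremal one in $\cz_\infty$ (up to the usual caveat that the latter is relevant only when $\g_0$ is abelian).

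A degree/weight bookkeeping shows that the number of nonzero coefficients produced this way matches $\bb(\g,\vth)=\tfrac12(\dim\g-\dim\g_0+\rk\g+\rk\g_0)$; for each $F_i$ one extracts $d_i$ components corresponding to the depths in the grading, and the sum telescopes. For algebraic independence I would invoke the Jacobian criterion at a generic point $\xi\in\g^*$. The hypothesis $\ind\g_{(0)}=\ind\g$ is precisely what guarantees that the coefficients $F_i^{(0)}$ form a system of $\rk\g$ functionally independent Casimirs on $\g_{(0)}^*$; a standard lower semi-continuity argument in $t$ then propagates functional independence to the full family $\{F_i^{(k)}\}$. This is the bi-Hamiltonian principle that a pencil of Kronecker type at a generic point yields Casimirs spanning the maximal admissible transcendence degree, and $0\in\BP_{\sf reg}$ is exactly the input that prevents a Jordan block from appearing at $t=0$, while the other potentially singular value $t=\infty$ is handled by the explicit description of $\{\ ,\,\}_{(\infty)}$.

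The main obstacle I expect is this functional-independence step: producing the components and placing them in $\gZ(\g,\vth)$ is essentially formal, but controlling the Jordan--Kronecker type of the pencil at a generic $\xi$ is delicate, and matching the count to the exact value $\bb(\g,\vth)$ requires tracking how many of the formal coefficients $F_i^{(k)}$ vanish identically versus survive as independent Casimirs. The hypothesis $0\in\BP_{\sf reg}$ is exactly the combinatorial input that removes this obstruction, which is why it appears as the sole assumption of the theorem.
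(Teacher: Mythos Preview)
This theorem is not proved in the present paper; it is quoted from~\cite[Theorem\,3.10]{py21}. Your overall architecture---the upper bound via $\gZ(\g,\vth)\subset\gS(\g)^{\g_0}$ together with~\cite[Prop.\,1.1]{m-y}, and the lower bound via the bi-Hamiltonian/Jordan--Kronecker mechanism---is the right shape and is indeed what~\cite{py21} does.

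There is, however, a concrete error in your counting. You write that ``for each $F_i$ one extracts $d_i$ components \dots\ and the sum telescopes'' to $\bb(\g,\vth)$. But $\sum_{i=1}^l d_i=\sum_i(m_i+1)=\tfrac12(\dim\g+\rk\g)=\bb(\g)$, which equals $\bb(\g,\vth)$ only when $\g_0$ is abelian. In general $\bb(\g,\vth)=\bb(\g)-\tfrac12(\dim\g_0-\rk\g_0)$, and this defect is exactly what the potentially singular point $t=\infty$ contributes. The number of nonzero $\vp$-homogeneous components of $F_i$ is not $d_i$; it is governed by the $\vp$-degree $d_i^\bullet$ and the residue $r_i$ (cf.\ Theorem~\ref{thm:main2}{\sf (iii)} in the $\gN$-regular case), and without a {\sf g.g.s.} hypothesis one cannot even control these numbers individually.

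The actual proof in~\cite{py21} does not proceed by counting explicit components of the $F_i$. It works directly with the linear algebra of the pencil $\pi_t(\xi)$ of skew forms at a generic $\xi\in\g^*$: the hypothesis $0\in\BP_{\sf reg}$ rules out Jordan blocks with eigenvalue $0$, and the structure of $\{\ ,\,\}_{(\infty)}$ (a nilpotent Lie algebra whose index is known, \cite[Theorem\,3.2]{py21}) controls the contribution at $\infty$. The transcendence degree of $\gZ$ is then read off from the Jordan--Kronecker decomposition, yielding $\bb(\g,\vth)$ without ever fixing a generating system for $\gS(\g)^\g$. Your sketch conflates this general pencil argument with the component-expansion approach of Theorem~\ref{thm:2.3}, which requires the additional {\sf g.g.s.} hypothesis and produces the stronger conclusion that $\gZ_\times$ is polynomial.
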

It is convenient to introduce the {\sf PC} subalgebra 
$\gZ_\times={\sf alg}\lg \cz_t\mid t\in \bbk\setminus \{0\}\rg \subset \gZ(\g,\vth)$, whose structure is easier 
to understand. Although $\gZ_\times$ can be a proper subalgebra of $\gZ(\g,\vth)$, this does not affect 
the transcendence degree, see \cite[Cor.\,3.8]{py21}. Moreover, there are many cases in which the centre 
$\cz_0$ can explicitly be described and one can check that $\cz_0\subset \gZ_\times$, see 
e.g.~\cite[Cor.\,4.7]{py21}. Then $\gZ(\g,\vth)$ is either equal to $\gZ_\times$  (if $\g_0$ is not abelian) or generated by $\gZ_\times$ and $\cz_\infty$  (if $\g_0$ is abelian). 

Another notion, which is useful in describing the structure of $\gZ_\times$, is that of a {\it good generating
system\/} in $\cz_1=\gS(\g)^\g$. As is well known, $\gS(\g)^\g$ is a polynomial algebra in $\rk\g$ 
generators. Let $H_1,\dots,H_l$ ($l=\rk\g$) be a set of algebraically independent homogeneous 
generators of $\gS(\g)^\g$ such that each $H_i$ is a $\vth$-eigenvector. Then we say that
$H_1,\dots,H_l$ is a set of $\vth$-{\it generators\/} in $\gS(\g)^\g$. If $|\vth|=m$ and 
$\g=\bigoplus_{i=0}^{m-1}\g_i$ is the associated $\BZ_m$-grading, then we consider the 1-parameter 
group $\vp: \bbk^*\to \GL(\g)$ such that $\vp(t){\cdot}x=t^ix$ for $x\in\g_i$. (Note that $\vp(\zeta)=\vth$.) 
This yields the natural $\BZ$-grading in $\gS(\g)$. If $\vp(t){\cdot}H_j=\sum_i t^i H_{j,i}$, then the nonzero 
polynomials $H_{j,i}$ are called the $\vp$-{\it homogeneous} (or {\it bi-homogeneous}) components of 
$H_j$. We say that $i$ is the $\vp$-{\it degree\/} of $H_{j,i}$. Let $H_j^\bullet$ denote the 
$\vp$-homogeneous component of $H_j$ of the maximal $\vp$-degree. This maximal $\vp$-degree is 
denoted by $\deg_\vp(H_j)$.

\begin{df}   \label{df:ggs}
A set of $\vth$-generators $H_1,\dots,H_l\in \gS(\g)^\g$ is called a {\it good generating system} 
(={\sf g.g.s.}) with respect to $\vth$, if $H_1^\bullet,\dots,H_l^\bullet$ are algebraically independent. If
there is {\sf g.g.s.} with respect to $\vth$, we also say that $\vth$ {\it admits\/} a {\sf g.g.s.}
\end{df}

The following is the main tool for checking that a set of $\vth$-generators forms a {\sf g.g.s.}

\begin{thm}[{\cite[Theorem\,3.8]{contr}}]    \label{thm:kokosik}
Let $H_1,\dots,H_l$ be a set of $\vth$-generators in $\gS(\g)^\g$. Then
\begin{itemize}
\item $\sum_{i=1}^l \deg_\vp H_j \ge  \sum_{i=1}^{m-1}i\dim\g_i =: D_\vth$;
\item $H_1,\dots,H_l$ is a {\sf g.g.s.} if and only if \ $\sum_{i=1}^l \deg_\vp H_j = D_\vth$.
\end{itemize}
 \end{thm}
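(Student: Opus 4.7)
My plan is to combine Kostant's Jacobian criterion for algebraic independence with a careful $\vp$-degree analysis of the differential form $\omega := dH_1 \wedge \cdots \wedge dH_l$. Identifying $\g^*\cong \g$ via the Killing form, $\omega$ lies in $\gS(\g)\otimes \Lambda^l\g$; since $\gS(\g)^\g = \bbk[H_1,\ldots,H_l]$ is a polynomial ring and the $H_j$ are algebraically independent, $\omega$ is nonzero, with value at a regular semisimple $\xi$ spanning the line $\Lambda^l\g^\xi$.

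Extending the $\vp$-action to $\gS(\g)\otimes \Lambda^l\g$, each $dH_j$ inherits a $\vp$-filtration whose top piece is $dH_j^\bullet$, of $\vp$-degree $d_j := \deg_\vp H_j$. Consequently $\omega$ is $\vp$-filtered with top $\vp$-degree at most $\sum_j d_j$, and this maximum is realised precisely by the leading wedge $dH_1^\bullet \wedge \cdots \wedge dH_l^\bullet$; by the Jacobian criterion again, this leading wedge is nonzero iff $H_1^\bullet,\ldots,H_l^\bullet$ are algebraically independent. For the companion lower bound $\sum_j d_j\ge D_\vth$, I would exploit the $\vth$-contraction $\g_{(0)}$: consider the rescaled Lie algebra $\g_{[t]}$ with bracket $[x,y]_{[t]} := \vp(t)^{-1}[\vp(t)x,\vp(t)y]$, polynomial in $t$ and specialising at $t=0$ to the bracket of $\g_{(0)}$. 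Each $t^{d_j}\vp(t)^{-1}{\cdot}H_j$ is invariant in $\g_{[t]}$ for $t\ne 0$ (since $\vp(t)\colon \g_{[t]}\to\g$ is a Lie-algebra isomorphism and $H_j\in \gS(\g)^\g$) and specialises at $t=0$ to $H_j^\bullet$, so by continuity $H_j^\bullet\in \gS(\g_{(0)})^{\g_{(0)}}$. Hence $\bbk[H_1^\bullet,\ldots,H_l^\bullet]\subset \gS(\g_{(0)})^{\g_{(0)}}$, and combining this inclusion with a Poincar\'e-series identity for the $\vp$-graded algebra $\gS(\g_{(0)})^{\g_{(0)}}$ will yield $\sum d_j\ge D_\vth$.

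The main technical difficulty is precisely this Poincar\'e-series identity, which should identify the minimal total $\vp$-degree of an algebraically independent family of $\g_{(0)}$-invariants with $D_\vth$. The cleanest route uses that $\g_{(0)} = \g_0\ltimes \n$ is a $\BZ$-graded semidirect product with $\n := \bigoplus_{i=1}^{m-1}\g_i$ a nilpotent ideal, and exploits the Killing-form duality $\g_i\cong (\g_{-i\bmod m})^*$ to show that the $\vp$-degree of the top exterior power $\Lambda^{\dim\g}\g$ equals exactly $D_\vth$; splitting this compatibly with a generic centraliser $\te = \g^\xi$ then gives the required relation. Once these inputs are in place, the two halves combine to give $\sum d_j\ge D_\vth$ with equality iff the $H_j^\bullet$ are algebraically independent, i.e.\ iff $H_1,\ldots,H_l$ form a \textsf{g.g.s.}
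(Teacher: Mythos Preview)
This theorem is not proved in the present paper; it is quoted verbatim from \cite[Theorem\,3.8]{contr} and used as a black box (see the attribution in the theorem header and the two later references to ``\cite[Theorem\,3.8]{contr}'' in the proofs of Theorems~\ref{thm:main1} and~\ref{thm:main3}). So there is no proof here to compare your proposal against.

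That said, a remark on your sketch. The first half---passing to $\omega=dH_1\wedge\cdots\wedge dH_l\in\gS(\g)\otimes\Lambda^l\g$, observing that the component of $\omega$ in $\vp$-degree $\sum_j d_j^\bullet$ is exactly $dH_1^\bullet\wedge\cdots\wedge dH_l^\bullet$, and invoking the Jacobian criterion---is sound and is indeed the mechanism behind the equivalence in \cite{contr}. What is missing is an \emph{independent} computation showing that the top $\vp$-degree of $\omega$ equals $D_\vth$ regardless of the choice of $\vth$-generators; once you know that, both bullets follow at once. Your proposed route to this via ``$H_j^\bullet\in\gS(\g_{(0)})^{\g_{(0)}}$ plus a Poincar\'e-series identity'' does not close the gap: the $H_j^\bullet$ need not be algebraically independent, so their mere membership in $\gS(\g_{(0)})^{\g_{(0)}}$ gives no lower bound on $\sum_j d_j^\bullet$, and no Poincar\'e-series identity for $\gS(\g_{(0)})^{\g_{(0)}}$ of the required strength is available without already knowing $\ind\g_{(0)}=\ind\g$ and polynomiality---precisely the open issues the paper is trying to settle. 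The argument in \cite{contr} instead computes the top $\vp$-degree of $\omega$ directly, by pairing $\omega(\xi)\in\Lambda^l\g^\xi$ against the top power of the Kirillov form $\hat\xi\in\Lambda^2\g^*$ and tracking $\vp$-weights through the identification $\Lambda^l\g\cong\Lambda^{\dim\g-l}\g^*\otimes\Lambda^{\dim\g}\g$; the factor $\Lambda^{\dim\g}\g$ contributes exactly $D_\vth$. If you want to make your approach self-contained, that is the computation to supply.
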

 
By Theorems~4.3 \& 4.6 in \cite{py21}, we have
\begin{thm}        \label{thm:2.3}
If\/ $\ind\g_{(0)}=l$ and $H_1,\dots,H_l$ is {\sf g.g.s.} with respect to $\vth$, then $\gZ_\times$ is a 
polynomial algebra, which is freely generated by the $\vp$-homogeneous components of
$H_1,\dots,H_l$.
\end{thm}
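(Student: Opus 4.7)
The plan is to identify $\gZ_\times$ as the algebra generated by the $\vp$-homogeneous components of $H_1,\dots,H_l$, establish their algebraic independence via a leading-$\vp$-term argument built on the {\sf g.g.s.}\ hypothesis, and verify consistency against the transcendence-degree formula of Theorem~\ref{thm:2.1}. I would begin with the $\vp$-symmetry of the pencil: the explicit form of $\{\,,\,\}_{(0)}$ and $\{\,,\,\}_{(\infty)}$ in terms of the $\BZ_m$-grading shows that $\vp(s)$ intertwines $\{\,,\,\}_1$ with a bracket of the form $\{\,,\,\}_{\tau(s)}$ for a monomial $\tau(s)\in\bbk^*$ whose image as $s$ varies covers all of $\bbk^*$. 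Consequently $\vp(s){\cdot}\cz_1=\cz_{\tau(s)}\subset\gZ_\times$, and Vandermonde interpolation over finitely many values of $s$ extracts every individual $H_{j,i}$ as a $\bbk$-linear combination of the polynomials $\vp(s)H_j=\sum_i s^i H_{j,i}$. This yields the generation statement $\gZ_\times=\bbk[H_{j,i}\mid 1\leq j\leq l,\ i\geq 0]$.

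For algebraic independence, I would pass to the associated graded with respect to the $\vp$-filtration on $\gS(\g)$, in which $\g_i$ sits in degree $i$. Under this $\gr_\vp$ operation, each $H_j$ maps to its top $\vp$-component $H_j^\bullet$, and the $H_j^\bullet$ are algebraically independent by Definition~\ref{df:ggs}. Any nontrivial polynomial relation $P(H_{j,i})=0$ among the bi-homogeneous components produces, upon extracting the top $\vp$-homogeneous part on both sides, a nontrivial polynomial relation among the $H_j^\bullet$, contradicting the {\sf g.g.s.}\ hypothesis. Hence the $H_{j,i}$ are algebraically independent generators, and $\gZ_\times$ is polynomial in them.

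A consistency check ties the picture to the transcendence-degree bound: the hypothesis $\ind\g_{(0)}=l$ places $0\in\BP_{\sf reg}$, so Theorem~\ref{thm:2.1} gives $\trdeg\gZ(\g,\vth)=\bb(\g,\vth)$, and by \cite[Cor.\,3.8]{py21} the same holds for $\gZ_\times$; Theorem~\ref{thm:kokosik}, via the identity $\sum_j\deg_\vp H_j=D_\vth$, confirms that the number of nonzero bi-homogeneous components matches $\bb(\g,\vth)$, so no generator is redundant. The main obstacle I anticipate is the leading-term step: extracting a genuine algebraic relation among the $H_j^\bullet$ from one among all $H_{j,i}$ requires care because a single monomial of $P$ may mix components of different $\vp$-degrees. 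This is handled by filtering $\bbk[H_{j,i}]$ by total $\vp$-degree and observing that the top stratum of this filtration consists exactly of polynomials in the $H_j^\bullet$, so the top $\vp$-part of $P$ lives in that stratum and descends to the desired relation.
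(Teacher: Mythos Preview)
There is a genuine gap in your algebraic-independence step. You claim that the top $\vp$-homogeneous part of a relation $P(H_{j,i})=0$ must be a polynomial in the $H_j^\bullet$ alone, but this is false: a monomial $\prod_k H_{j_k,i_k}$ has $\vp$-degree $\sum_k i_k$, and the maximum of this sum over the monomials of $P$ can be attained without any factor being a top component. For instance, if $H_1$ has nonzero components in $\vp$-degrees $0$ and $3$ and $H_2$ in $\vp$-degrees $1$ and $4$, then $H_{1,3}H_{2,1}$ and $H_{1,0}H_{2,4}$ both have $\vp$-degree $4$; a putative relation $H_{1,3}H_{2,1}=c\,H_{1,0}H_{2,4}$ would survive passage to top $\vp$-degree intact, yielding no relation among $H_1^\bullet=H_{1,3}$ and $H_2^\bullet=H_{2,4}$ alone. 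Your assertion that ``the top stratum of this filtration consists exactly of polynomials in the $H_j^\bullet$'' is precisely the point that fails.

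The argument that works is the one you relegate to a ``consistency check''. The paper does not prove Theorem~\ref{thm:2.3} here at all; it is quoted from \cite[Theorems~4.3 \& 4.6]{py21}, where algebraic independence is obtained by a counting argument rather than a leading-term argument. Your Vandermonde step shows that $\gZ_\times$ is generated by the nonzero $H_{j,i}$. The {\sf g.g.s.}\ hypothesis enters only through Theorem~\ref{thm:kokosik}: the identity $\sum_j d_j^\bullet=D_\vth$ lets one compute that the number of admissible bi-degrees $(j,i)$ with $0\le i\le d_j^\bullet$ and $i\equiv r_j\pmod m$ is exactly $\bb(\g,\vth)$. On the other hand, the hypothesis $\ind\g_{(0)}=l$ gives $\trdeg\gZ_\times=\bb(\g,\vth)$ via Theorem~\ref{thm:2.1} and \cite[Cor.\,3.8]{py21}. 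Since an integral domain generated over $\bbk$ by $n$ elements and of transcendence degree $n$ is a polynomial ring in those elements, the $H_{j,i}$ are forced to be nonzero and algebraically independent. Thus the two hypotheses play separate roles---one bounds the number of generators from above, the other bounds the transcendence degree from below---and the {\sf g.g.s.}\ is not used as a leading-term device.
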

Theorems \ref{thm:2.1} and \ref{thm:2.3} imply that under these hypotheses the total number of the 
nonzero bi-homogeneous components of all generators $H_j$ equals $\bb(\g,\vth)$. 

\subsection{The Kac diagram of $\vth\in{\sf \Aut}^f(\g)$} 
\label{subs:kac}
A pair $(\g,\vth)$ is {\it decomposable}, if $\g$ is a direct sum of non-trivial
$\vth$-stable ideals. Otherwise  $(\g,\vth)$ is said to be {\it indecomposable}. A classification of finite order 
automorphisms readily reduces to the indecomposable case. The centre of $\g$ is always a $\vth$-stable ideal and automorphisms of an abelian Lie algebra have no particular significance (in our context). Therefore, assume that $\g$ is semisimple. 

If $\g$ is not simple and $(\g,\vth)$ is  indecomposable, then $\g=\h^{\oplus n}$ is a sum of $n$ copies of 
a simple Lie algebra $\h$ and $\vth$ is a composition of a periodic automorphism of $\h$ and a cyclic 
permutation of the summands. 

Below we assume that $\g$ is simple. By a result of R.\,Steinberg~\cite[Theorem\,7.5]{st},
every semisimple automorphism of $\g$ fixes a Borel subalgebra of $\g$ and a Cartan 
subalgebra thereof. Let $\be$ be a $\vth$-stable Borel subalgebra and $\te\subset\be$ a 
$\vth$-stable Cartan subalgebra. This yields a $\vth$-stable triangular decomposition 
$\g=\ut^-\oplus\te\oplus\ut$, where $\ut=[\be,\be]$. Let $\Delta=\Delta(\g)$ be the set of roots of $\te$, 
$\Delta^+$ the set of positive roots corresponding to $\ut$, and $\Pi\subset\Delta^+$ the set of simple 
roots. Let $\g^\gamma$ be the root space for $\gamma\in\Delta$. Hence $\ut=\bigoplus_{\gamma\in\Delta^+}\g^\gamma$.

Clearly, $\vth$ induces a permutation of $\Pi$, which is an automorphism of the Dynkin diagram,
and $\vth$ is inner if and only if this permutation is trivial. Accordingly, $\vth$ can be written as a product 
$\sigma{\cdot}\vth'$, where $\vth'$ is inner and $\sigma$ is the so-called {\it diagram automorphism\/} of 
$\g$. We refer to \cite[\S\,8.2]{kac} for an explicit construction and properties of $\sigma$. In particular, 
$\sigma$ depends only on the connected component of ${\sf Aut}(\g)$ that contains $\vth$ and $\Ord$ 
equals the order of the corresponding permutation of $\Pi$. The {\it index\/} of $\vth\in {\sf Aut}^f(\g)$ is 
the order of the image of $\vth$ in ${\sf Aut}(\g)/{\sf Int}(\g)$, i.e., the order of the corresponding diagram 
automorphism.

\subsubsection{The inner periodic automorphisms}
\label{subsub:int}
Set $\Pi=\{\ap_1,\ldots,\ap_l\}$ and let $\delta=\sum_{i=1}^l n_i \ap_i$ be the highest root in $\Delta^+$. 
An inner periodic automorphism with $\te\subset\g_0$ is determined by an $(l+1)$-tuple of non-negative 
integers ({\it Kac labels})  $\boldsymbol{p}=(p_0,p_1,\ldots,p_l)$ such that $\gcd(p_0,\ldots,p_l)=1$ and 
$\boldsymbol{p}\ne (0,\dots,0)$.
Set $m:=p_0+\sum_{i=1}^{l} n_i p_i$ and let $\ov{p_i}$ denote the unique representative of
$\{0,1,\dots,m-1\}$ such that $p_i \equiv \ov{p_i} \pmod m$. The $\BZ_m$-grading 
$\g=\bigoplus_{i=0}^{m-1}\g_i$ corresponding to $\vth=\vth(\boldsymbol{p})$ is defined 
by the conditions that
\[
   \g^{\ap_i}\subset\g_{\ov{p_i}} \ \text{ for $i=1,\dots,l$}, 
   \enskip \g^{-\delta}\subset\g_{\ov{p_0}},  \text{ and } \ \te\subset\g_0.
\]
For our purposes, it is better to introduce first the $\BZ$-grading of $\g$ defined by $(p_1,\ldots,p_l)$ 
and then factorise ("glue") it modulo $m$, see Section~\ref{sect:inner-auto} for details.

The {\it Kac diagram\/} $\eus K(\vth)$ of $\vth=\vth(\boldsymbol{p})$ is the {\bf affine} (=\,extended) 
Dynkin diagram of $\g$, $\tilde{\eus D}(\g)$, equipped with the labels $p_0,p_1,\dots,p_l$. In 
$\eus K(\vth)$, the $i$-th node of the usual Dynkin diagram ${\eus D}(\g)$ represents $\ap_i$ and the 
extra node represents $-\delta$. It is convenient to assume that $\ap_0=-\delta$ and $n_0=1$. Then 
$(l+1)$-tuple $(n_0,n_1,\dots,n_l)$ yields coefficients of linear dependence for $\ap_0,\ap_1,\dots,\ap_l$. 
Set $\widehat\Pi=\Pi\cup\{\ap_0\}$. If $n_i=1$ for $i\ge 1$, then the subdiagram without the $i$-th node is 
isomorphic to ${\eus D}(\g)$ and $\widehat\Pi\setminus \{\ap_i\}$ is another set of simple roots in 
$\Delta$. Hence any node of $\tilde{\eus D}(\g)$ with $n_i=1$ can be regarded as an extra node, which 
merely corresponds to another choice of a Borel subalgebra containing our fixed Cartan subalgebra 
$\te$. Practically this means that we consider these Kac diagrams modulo the action of the 
automorphism group of the graph $\tilde{\eus D}(\g)$.

\subsubsection{The outer periodic automorphisms}
\label{subsub:out}
Let $\sigma$ be the diagram automorphism of $\g$ related to $\vth$. The orders of nontrivial 
diagram automorphisms are:
\begin{itemize}
\item \ $\GR{A}{n}$ ($n\ge2$), \ $\GR{D}{n}$ ($n\ge4$), \ $\GR{E}{6}$: \ $\Ord=2$;
\item \ $\GR{D}{4}$: \ $\Ord=3$.
\end{itemize}

\noindent
Therefore, $\sigma$ defines either $\BZ_2$- or $\BZ_3$-grading of $\g$. To avoid confusion with 
the $\vth$-grading, this $\sigma$-grading is denoted as follows:
\beq      \label{eq:sigma-grad}
\g=  \begin{cases}  \g^{(\sigma)}_0\oplus\g_1^{(\sigma)}, &\text{ if } \Ord=2 ;\\
         \g^{(\sigma)}_0\oplus\g_1^{(\sigma)}\oplus\g^{(\sigma)}_2, & \text{ if } \Ord=3 , \end{cases}
\eeq    
and the latter occurs only for $\g=\gt{so}_8$. In all cases, $\g^\sigma=\g^{(\sigma)}_0$ is a simple Lie 
algebra and each $\g^{(\sigma)}_i$ is a simple $\g^{\sigma}$-module. If $\Ord=3$, then 
$\g_1^{(\sigma)}\simeq\g_2^{(\sigma)}$ as $\g^{\sigma}$-modules and 
$\g_2^{(\sigma)}=[\g_1^{(\sigma)},\g_1^{(\sigma)}]$. Since $\be$ and $\te$ are $\sigma$-stable, 
$\be^\sigma=\te^\sigma\oplus\ut^\sigma$ is a Borel subalgebra of $\g^\sigma$ and $\te_0=\te^\sigma$ is 
a Cartan subalgebra of both $\g^\sigma$ 
and $\g_0=\g^\vth$. Let $\Delta^+(\g^\sigma)$ be the set of positive roots of $\g^\sigma$ corresponding 
to $\ut^\sigma$ and let $\{\nu_1,\dots,\nu_r\}$ be the set of simple 
roots in $\Delta^+(\g^\sigma)$.  

The Kac diagrams of  outer periodic automorphism are supported on the twisted affine Dynkin diagrams
of index 2 and 3, see \cite[\S\,8]{vi76} and~\cite[Table\,3]{t41}. Such a diagram has $r+1$ nodes, where 
$r=\rk\g^\sigma$, certain $r$ nodes comprise the Dynkin diagram of the simple Lie algebra $\g^{\sigma}$, 
and the additional node represents the lowest weight $-\delta_1$ of the $\g^\sigma$-module 
$\g_1^{(\sigma)}$. Write $\delta_1=\sum_{i=1}^r a_i' \nu_i$ and set $a'_0=1$. Then the $(r+1)$-tuple 
$(a'_0,a'_1,\dots,a'_r)$  yields coefficients of linear dependence for $-\delta_1,\nu_1,\dots,\nu_r$.

The subalgebras $\g^\sigma$ and $\g^\sigma$-module $\g_1^{(\sigma)}$ are gathered in the following 
table, where $\VV_\lb$ is a simple $\g^\sigma$-module with highest weight $\lb$, and the numbering of 
simple roots and fundamental weights $\{\vp_i\}$ for  $\g^\sigma$ follows~\cite[Table\,1]{t41}.

\begin{center}
\begin{tabular}{|c|lllll|}  \hline
$\g$ & $\GR{A}{2r}$ & $\GR{A}{2r-1}$ & $\GR{D}{r+1}$ & $\GR{E}{6}$ & $\GR{D}{4}$ \\
$\g^\sigma$ & $\GR{B}{r}$ & $\GR{C}{r}$  &  $\GR{B}{r}$ & $\GR{F}{4}$ & $\GR{G}{2}$ \\
$\g^{(\sigma)}_1$ & $\VV_{2\vp_1}$ & $\VV_{\vp_2}$ & $\VV_{\vp_1}$ & $\VV_{\vp_1}$ & 
$\VV_{\vp_1}$  \\ \hline
twisted diagram \rule{0pt}{3ex} & $\GR{A}{2r}^{(2)}$ & $\GR{A}{2r-1}^{(2)}$ & $\GR{D}{r+1}^{(2)}$ & $\GR{E}{6}^{(2)}$ & 
$\GR{D}{4}^{(3)}$ 
\\  \hline
\end{tabular}
\end{center}

\vskip1ex\noindent
Some of the twisted affine diagrams are depicted below. We enhance these 
diagrams with the coefficients $\{a'_i\}$ over the nodes and the corresponding roots under the nodes.

\centerline{
$\GR{A}{2}^{(2)}$: \quad 
\raisebox{-2ex}{
\begin{picture}(55,36)(-3,-5)
\setlength{\unitlength}{0.018in} 
\multiput(10,8)(20,0){2}{\circle{6}}
\put(20.32,5.75){$>$} 
\multiput(12.86,6.5)(0,3){2}{\line(1,0){10.6}}    
\multiput(12.86,7.5)(0,1){2}{\line(1,0){11.5}} 
\put(7.5,14.2){\footnotesize $1$} \put(27.5,14.2){\footnotesize $2$}
 \put(2,-4){\footnotesize $-\delta_1$} \put(27,-4){\footnotesize $\nu_1$}
 \put(38,6){;}
\end{picture} }
\qquad
$\GR{A}{2r}^{(2)}$, $r\ge 2$: \quad 
\raisebox{-2ex}{
\begin{picture}(115,36)(75,-5)
\setlength{\unitlength}{0.018in} 
\multiput(70,8)(20,0){3}{\circle{6}}
\multiput(150,8)(20,0){2}{\circle{6}}
\multiput(72.86,7)(0,2){2}{\line(1,0){10.8}}    \put(80.24,5.72){$>$} 
\multiput(152.86,7)(0,2){2}{\line(1,0){10.8}}      \put(160.24,5.72){$>$}
\put(93,8){\line(1,0){14}}
\multiput(113,8)(27,0){2}{\line(1,0){7}}
\put(124,5){$\cdots$}
\put(67.5,14.2){\footnotesize $1$}  \put(87.5,14.2){\footnotesize $2$} \put(107.5,14.2){\footnotesize $2$}
\put(147.5,14.2){\footnotesize $2$} \put(167.5,14.2){\footnotesize $2$}
\put(62,-4){\footnotesize $-\delta_1$}
\put(86.5,-4){\footnotesize $\nu_1$} \put(106.5,-4){\footnotesize $\nu_2$}
\put(143.5,-4){\footnotesize $\nu_{r{-}1}$} \put(166.5,-4){\footnotesize $\nu_{r}$}
\put(177,6){;}
\end{picture} }
}

\begin{center} 
$\GR{E}{6}^{(2)}$: \quad  
\raisebox{-1.7ex}{ \begin{picture}(116,35)(-23,-3)
\setlength{\unitlength}{0.018in} 
\multiput(-10,8)(20,0){5}{\circle{6}}
\put(7,-4){\footnotesize $\nu_1$} \put(27,-4){\footnotesize $\nu_2$}
\put(47,-4){\footnotesize $\nu_3$} \put(67,-4){\footnotesize $\nu_4$}
\put(-19,-4){\footnotesize $-\delta_1$}
\put(-7,8){\line(1,0){14}}
\put(13,8){\line(1,0){14}} \put(53,8){\line(1,0){14}}
\multiput(36.34,7)(0,2){2}{\line(1,0){10.8}}
\put(32.8,5.72){$<$} 
\put(-12.5,14.2){\footnotesize $1$}  \put(7.5,14.2){\footnotesize $2$} \put(27.5,14.2){\footnotesize $3$}
\put(47.5,14.2){\footnotesize $2$} \put(67.5,14.2){\footnotesize $1$}
\put(77,6){;}
\end{picture} }
\qquad
$\GR{D}{4}^{(3)}$: \quad  
\raisebox{-1.7ex}{\begin{picture}(56,35)(-23,-3)
\setlength{\unitlength}{0.018in} 
\multiput(-10,8)(20,0){3}{\circle{6}}
\put(7,-4){\footnotesize $\nu_1$} \put(27,-4){\footnotesize $\nu_2$}
\put(-19,-4){\footnotesize $-\delta_1$}
\put(-7,8){\line(1,0){14}}
\put(13,8){\line(1,0){14}}
\multiput(16.34,6.6)(0,2.8){2}{\line(1,0){11.16}}
\put(12.8,5.72){$<$} 
\put(-12.5,14.2){\footnotesize $1$}  \put(7.5,14.2){\footnotesize $2$} \put(27.5,14.2){\footnotesize $1$}
\put(37,6){.}
\end{picture} }
\end{center} 

\vskip1ex\noindent
Let $\boldsymbol{p}=(p_0, p_1,\dots, p_r)$ be an $(r+1)$-tuple such that 
$\boldsymbol{p}\ne (0,0,\dots,0)$ and $\gcd(p_0, p_1\dots, p_r)=1$. The Kac diagram of 
$\vth=\vth(\boldsymbol{p})$ is the required twisted affine diagram equipped with the labels 
$(p_0, p_1,\dots, p_r)$ over the nodes. Then $m=|\vth(\boldsymbol{p})|=\Ord{\cdot} \sum_{i=0}^r a_i' p_i$. 
\\ \indent
Similar to the inner case, the $\BZ_m$-grading $\g=\bigoplus_{i=0}^{m-1}\g_i$ corresponding to 
$\vth=\vth(\boldsymbol{p})$ is defined by the conditions that
\[
   (\g^{\sigma})^{\nu_i}\subset\g_{\ov{p_i}} \ \text{ for $i=1,\dots,r$}, 
   \enskip (\g_1^{(\sigma)})^{-\delta_1}\subset\g_{\ov{p_0}},  \text{ and } \ \te^\sigma\subset\g_0.
\]
In Section~\ref{sect:outer}, we give a detailed description of this $\BZ_m$-grading  
and use it to prove a modification result on $\eus K(\vth)$ and the structure of $\g_{(0)}$.

\subsection{The description of $\g_0$ and $\g_1$ via the Kac diagram of $\vth$}
\label{subs:g_0}
Let $p_0,p_1,\dots,p_l$ be the Kac labels of $\vth\in {\sf Int}^f(\g)$. Then the subdiagram of nodes in 
$\tilde{\eus D}(\g)$ such that $p_i=0$ is the Dynkin diagram of $[\g_0,\g_0]$, while the dimension of the 
centre of $\g_0$ equals $\#\{i\mid p_i\ne 0\}-1$. Then $\{\ap_i\mid i\in\{0,1,\dots,l\} \ \& \ p_i=1\}$ are the 
lowest weights of the simple $\g_0$-modules in $\g_1$, i.e., if $\VV^-_{\mu}$ stands for the 
$\g_0$-module with {\sl lowest\/} weight $\mu$, then
\[
    \g_1=\bigoplus_{i:\ p_i=1} \VV^-_{\ap_i} .
\]
The same principle applies to the outer periodic automorphisms, $\tilde{\eus D}(\g)$ being replaced
with the respective twisted affine Dynkin diagram. These results are 
contained in~\cite[Prop.\,17]{vi76}.

It follows  that the subalgebra of $\vth$-fixed points, $\g_0$, is semisimple if and only 
if $\eus K(\vth)$ has a unique nonzero label. At the other extreme, $\g_0$ is abelian if and only if all $p_i$ are nonzero. Furthermore, if all $p_i\le 1$, then the following conditions are equivalent:
\begin{itemize}
\item \ $\g_0=\g^\vth$ is semisimple;
\item \ $\g_1$ is a simple $\g_0$-module;
\item \ $\eus K(\vth)$ has a unique nonzero label.
\end{itemize}

\begin{ex}
Take the automorphism of $\GR{D}{4}$ of index $3$ with Kac labels $p_0=p_2=1, p_1=0$, 
i.e., $\eus K(\vth)$ is 
\quad  
\raisebox{-1.5ex}{\begin{picture}(65,34)(-18,0)
\setlength{\unitlength}{0.018in} 
\multiput(-10,8)(20,0){3}{\circle{6}}
\put(-7,8){\line(1,0){14}}
\put(13,8){\line(1,0){14}}
\multiput(16.34,6.6)(0,2.8){2}{\line(1,0){11.16}}
\put(12.8,5.72){$<$} 
\put(-12.5,14.2){\footnotesize {\it\bfseries 1}}  \put(7.5,14.2){\footnotesize {\it\bfseries 0}} 
\put(27.5,14.2){\footnotesize {\it\bfseries 1}}
\put(35,6){.}
\end{picture} } Then $|\vth|=3(1+1)=6$, $G_0=\SL_2\times T_1$, and 
$\g_1=\VV_{\vp}{\cdot}\esi+\VV_{3\vp}{\cdot}\esi^{-1}$ as $G_0$-module. Here $\vp$ is the fundamental weight of $\SL_2$ and $\esi$ is the basic character of $T_1$.
\end{ex}

\section{On the index of periodic contractions of semisimple Lie algebras}
\label{sect:index-g_0}

\noindent
In this section, we recall the structure of Lie algebras $\g_{(0)}$ and $\g_{(\infty)}$ and then prove that
$\ind \g_{(0)}=\ind\g$ for small values of $m$.
Let $\zeta=\sqrt[m]1$ be a fixed primitive root of unity.  Then 
\beq         \label{eq:grading}
     \g=\bigoplus_{i=0}^{m-1} \g_i ,
\eeq
where the eigenvalue of $\vth$ on $\g_i$ is $\zeta^i$. The Lie algebras $\g$, $\g_{(0)}$, and 
$\g_{(\infty)}$ have the same underlying vector space, but different Lie brackets, denoted 
$[\ ,\,]$, $[\ ,\,]_{(0)}$, and $[\ ,\,]_{(\infty)}$, respectively. More precisely, 
\beq         \label{eq:g-and-g_0}
\begin{split}  
 &\text{ if  $i+j\le m-1$, then $[\g_i,\g_j]=[\g_i,\g_j]_{(0)} \subset \g_{i+j}$;} \\
 &\text{ if  $i+j > m-1$, then $[\g_i,\g_j]_{(0)}=0$, while $[\g_i,\g_j] \subset \g_{i+j-m}$.}
\end{split}
\eeq
Hence vector space decomposition~\eqref{eq:grading} is a $\BZ_m$-grading for $\g$, but it is
an $\BN$-grading for $\g_{(0)}$. Then the $(\infty)$-bracket can be defined as
\[
   [\ ,\,]_{(\infty)}=[\ ,\,] - [\ ,\,]_{(0)} .
\]
One readily verifies that $\g_{(\infty)}$ is also $\BN$-graded and its component of grade $i$ is
$\g_{m-i}$ for $i=1,2,\dots,m$; in particular, the component of grade $0$ is trivial.
This implies that $\g_{(\infty)}$ is nilpotent, cf.~\cite[Prop.\,2.3]{py21}.

Since $\ind\g_{(\infty)}$ is known~\cite[Theorem\,3.2]{py21}, we are interested now in the 
problem of computing $\ind\g_{(0)}$. Let us recall some relevant results. 
\begin{itemize}
\item By the semi-continuity of index under contractions, one has $\ind  \g_{(0)}\ge \ind\g$;
\item if $m=2$, then the $\BZ_2$-contraction $\g_{(0)}\simeq\g_0\ltimes\g_1^{\sf ab}$ is a semi-direct product and therefore
$\ind \g_{(0)}=\ind\g$~\cite[Prop.\,2.9]{p07};
\item if $\g_1$ contains a regular element of $\g$, 
then $\ind \g_{(0)}=\ind\g$~\cite[Prop.\,5.3]{p09}. 
\end{itemize}

\begin{conj}   \label{conj1}
For any periodic automorphism $\vth$, one has $\ind \g_{(0)}=\ind\g$.
\end{conj}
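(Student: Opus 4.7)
The plan is to combine the diagram-level simplifications established in the paper with an Elashvili-type recursion on centralisers. First one reduces to the case that $\g$ is simple and $(\g,\vth)$ is indecomposable, since both $\g$ and $\g_{(0)}$ split compatibly with any $\vth$-stable direct sum decomposition. Then by Theorem~\ref{thm:change-to-1} in the inner case and Theorem~\ref{thm:change2} in the outer case, the isomorphism type of $\g_{(0)}$ is controlled solely by the \emph{support} of the nonzero Kac labels, so we may restrict to $\vth$ whose labels all lie in $\{0,1\}$. This reduces an a priori infinite set of cases (one for every Kac diagram) to a finite one.

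The next step is to split according to whether the support of $\eus K(\vth)$ contains a node of the kind singled out in Theorem~\ref{thm:semidir}. If it does, then in the inner case $\g_{(0)}$ is isomorphic to a parabolic contraction and Proposition~\ref{prop:parab&index} gives $\ind\g_{(0)}=\ind\g$ at once; the outer case should admit an analogous treatment. For the remaining diagrams, the strategy is a Ra\"is/Elashvili style reduction. Using $\g\simeq\g^*$ as $\vth$-modules via the Killing form, pick $\xi\in\g^*$ realising the minimum $\dim (\g_{(0)})^{\xi}$, decompose $\xi$ into its semisimple and nilpotent pieces, and absorb the semisimple part into a $\vth$-stable Levi subalgebra. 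The problem is then reduced to showing, for each nilpotent $x\in\g_1$, the $\vth$-analogue of the Elashvili conjecture
\[
  \ind (\g^{x})_{(0)} \;=\; \ind \g^x \;=\; \rk\g ,
\]
flagged in the introduction and already verified for $\gN$-regular $\vth$ by Theorem~\ref{thm:main1}. An induction on $\dim\g$, exploiting the $\BZ_m$-grading that $\g^x$ inherits from $\g$ via an adapted $\mathfrak{sl}_2$-triple, would in principle close the argument.

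The chief obstacle lies precisely in this last step. Centralisers $\g^x$ of nilpotent elements are almost never reductive, so Kac's classification and the diagram reduction used above do not apply directly to $(\g^x)_{(0)}$; one must verify the identity for the induced grading on a Lie algebra whose reductive quotient is $\g^{x}/\rr(\g^x)$ and whose nilradical carries a nontrivial $\vth$-action. This is quite analogous to the way the classical Elashvili conjecture resisted uniform proof and was finally settled only by detailed case analysis. Consequently, a realistic expectation is that the $\vth$-version will likewise require type-by-type verification in the spirit of items \textit{(1)}--\textit{(5)} of the introduction, with the exceptional types $\GR{E}{7}$, $\GR{E}{8}$ and outer automorphisms having scattered nonzero labels providing the principal technical difficulty. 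Any uniform argument would need a finer tool, perhaps a Vinberg-theoretic analysis of how $\dim (\g_{(t)})^{\xi}$ varies with $t\in\BP^1$ and a systematic way to control the jump at $t=0$.
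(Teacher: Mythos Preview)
The statement you are addressing is \emph{Conjecture~\ref{conj1}}, and the paper does \emph{not} prove it; it establishes various special cases (small $m$, $\gt{sl}_n$, $\gt{so}_N$, $\GR{G}{2}$, parabolic-type inner automorphisms) and explicitly presents the general assertion as an open problem. Consequently there is no paper proof to compare against, and your proposal is, as you yourself acknowledge in the final paragraph, a strategy sketch with a decisive gap rather than a proof.

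Your reduction steps largely mirror the paper's toolkit, but two points deserve correction. First, the result you attribute to Theorem~\ref{thm:main1} is wrong: that theorem concerns the existence of a {\sf g.g.s.}\ for $\gN$-regular $\vth$, not the index equality. The index statement for $\gN$-regular $\vth$ is quoted from~\cite[Prop.\,5.3]{p09}, and in any case it follows from the fact that $\g_1$ then contains a regular element, so Corollary~\ref{cor:2} applies with $\g^x$ abelian. Second, your remark that ``the outer case should admit an analogous treatment'' for parabolic contractions is speculative: the paper has no outer analogue of Theorem~\ref{thm:semidir}, and none is known.

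On the main reduction, the paper's inductive device (Proposition~\ref{prop:1} and Corollary~\ref{cor:2}) is cleaner than the Jordan-decomposition-of-$\xi$ scheme you describe: one needs only \emph{some} $x\in\g_1$ with $\ind(\g^x)_{(0)}=\ind\g^x$, and choosing $x$ semisimple keeps $\g^x$ reductive, so one descends to a smaller semisimple algebra with the induced $\vth$. The base of this induction is the case $\g_1\subset\gN$, and that is precisely where the problem becomes hard---for the reason you correctly identify, namely that $\g^x$ for nilpotent $x$ is not reductive and the Kac-diagram machinery no longer applies. Your proposal does not supply a way around this, so it remains a plan rather than a proof.
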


Let us record the following simple fact.
\begin{lm}      \label{lm:ss}
It suffices to verify Conjecture~\ref{conj1} for the semisimple Lie algebras. 
\end{lm}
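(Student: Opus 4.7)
The plan is to reduce directly to the semisimple derived subalgebra. Since $\g$ is the Lie algebra of a connected reductive group, we have the canonical decomposition $\g=\z(\g)\oplus\es$, where $\z(\g)$ is the (abelian) centre and $\es=[\g,\g]$ is semisimple. Both summands are characteristic ideals and are therefore preserved by every automorphism of $\g$, in particular by $\vth$. Accordingly, the $\BZ_m$-grading \eqref{eq:grading} refines to $\g_i=\z(\g)_i\oplus\es_i$ with $\z(\g)_i=\z(\g)\cap\g_i$ and $\es_i=\es\cap\g_i$, and $\vth|_\es$ is a periodic automorphism of the semisimple Lie algebra $\es$.

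Next I would verify that the contraction respects this splitting. Because $[\z(\g),\g]=0$ in $\g$, the defining formulae \eqref{eq:g-and-g_0} give $[\z(\g),\g]_{(0)}=0$ as well, so $\z(\g)$ remains central in $\g_{(0)}$. The bracket induced by $[\ ,\,]_{(0)}$ on $\es$ is, by the very same formulae, exactly the $\vth|_\es$-contracted bracket on $\es$. Hence, as Lie algebras,
\[
\g_{(0)}\;=\;\z(\g)\oplus\es_{(0)}.
\]

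To finish, I would invoke three elementary facts: the index is additive over direct sums of Lie algebras, $\ind\z(\g)=\dim\z(\g)$ since $\z(\g)$ is abelian, and $\ind\es=\rk\es$ since $\es$ is semisimple. Together they yield
\[
\ind\g_{(0)}-\ind\g\;=\;\ind\es_{(0)}-\ind\es,
\]
so Conjecture~\ref{conj1} for the reductive pair $(\g,\vth)$ is equivalent to Conjecture~\ref{conj1} for the semisimple pair $(\es,\vth|_\es)$. There is no serious obstacle here; the argument is essentially bookkeeping, and the only substantive inputs are the three standard identities used in the last display together with the invariance of $\z(\g)$ and $\es$ under $\vth$.
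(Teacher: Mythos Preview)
Your proof is correct and follows essentially the same approach as the paper: decompose $\g$ as centre plus semisimple part, observe that the $\vth$-contraction respects this direct sum (the centre stays abelian), and use additivity of the index. The paper's version is terser, writing $\g_{(0)}=\es_{(0)}\oplus\ce_{(0)}$ and noting $\ind\ce=\ind\ce_{(0)}$ since $\ce$ is abelian, but the content is the same; your mention of $\ind\es=\rk\es$ is harmless but not actually needed for the displayed identity.
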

\begin{proof}
Write $\g=\es\oplus\ce$, where $\ce$ is the centre of $\g$ and $\es=[\g,\g]$. Then $\g_{(0)}=\es_{(0)}\oplus\ce_{(0)}$. Since $\ce$ is an Abelian Lie algebra, then so is $\ce_{(0)}$ and 
$\ind\ce=\ind\ce_{(0)}$. The result follows.
\end{proof}

\begin{lm}      \label{lm:1}
Suppose that\/  $\ind(\g_{(0)})^\xi=\ind\g$ \ for some $\xi\in\g_{(0)}^*$. Then\/ $\ind\g_{(0)}=\ind\g$.
\end{lm}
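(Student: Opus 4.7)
The plan is to combine the inequality $\ind\g_{(0)}\ge\ind\g$ --- already available from semi-continuity of the index under the contraction $\g\rightsquigarrow\g_{(0)}$ --- with the opposite inequality $\ind\g_{(0)}\le\ind\g$, which must be extracted from the hypothesis. Thus the content of the lemma is to produce the upper bound on $\ind\g_{(0)}$.

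For this I would invoke the following general fact: for \emph{any} finite-dimensional Lie algebra $\q$ and any $\xi\in\q^*$, one has $\ind\q\le\ind\q^\xi$. Applied to $\q=\g_{(0)}$ and the given $\xi$, combined with the hypothesis $\ind(\g_{(0)})^\xi=\ind\g$, this yields $\ind\g_{(0)}\le\ind\g$, completing the argument.

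To prove the general inequality (which goes back to Vinberg and was used by the first author, e.g.~in~\cite{p09}), I would apply the argument-shift technique. Fix $\mu\in(\q^\xi)^*$ with $\dim(\q^\xi)^\mu=\ind\q^\xi$, extend $\mu$ to $\tilde\mu\in\q^*$, and consider the pencil $\xi_s:=\xi+s\tilde\mu$, $s\in\bbk$. Choose a vector-space complement $W$ of $\q^\xi$ in $\q$ and write $x=x_0+x_1$ with $x_0\in W$, $x_1\in\q^\xi$. The stabilizer equation $x\cdot\xi_s=0$ becomes $x_0\cdot\xi_s=-s\,x_1\cdot\tilde\mu$. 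For generic $s$, the map $w\mapsto w\cdot\xi_s$ identifies $W$ with its image $V_s\subset\q^*$, so $x_0$ is uniquely determined by $x_1$ subject to the condition $x_1\cdot\tilde\mu\in V_s$. Hence $\dim\q^{\xi_s}$ equals the dimension of the kernel of the map $\q^\xi\to\q^*/V_s$, $x_1\mapsto x_1\cdot\tilde\mu \bmod V_s$. Taking $s\to0$, one has $V_0=\q\cdot\xi$, and using $(\q\cdot\xi)^{\perp}=\q^\xi$ together with $\tilde\mu|_{\q^\xi}=\mu$ one checks that the limit kernel is precisely $(\q^\xi)^\mu$, of dimension $\ind\q^\xi$. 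Upper semi-continuity of kernel dimension in the family $\{V_s\}$ then gives $\dim\q^{\xi_s}\le\ind\q^\xi$ for generic $s$, whence $\ind\q\le\ind\q^\xi$.

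The only step requiring any care is the identification of the $s\to0$ limit kernel with $(\q^\xi)^\mu$; everything else is formal linear algebra of a pencil. Since no feature of the specific Lie algebra $\g_{(0)}$ is used, the lemma really reduces to an off-the-shelf fact about coadjoint representations of arbitrary Lie algebras, and in the text it can either be cited from~\cite{p09} or proved in a few lines along the lines above.
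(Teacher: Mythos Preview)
Your proposal is correct and follows essentially the same line as the paper: combine the semi-continuity inequality $\ind\g_{(0)}\ge\ind\g$ with Vinberg's inequality $\ind\q^\xi\ge\ind\q$ applied to $\q=\g_{(0)}$. The paper's own proof is a two-line citation of exactly these two facts (referring to \cite{p03} for Vinberg's inequality rather than \cite{p09}); your additional sketch of the argument-shift proof of Vinberg's inequality is extra detail the paper does not include, but it is sound.
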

\begin{proof}
By Vinberg's inequality for $\g_{(0)}$ (cf.~\cite[Prop.\,1.6 \& Cor.\,1.7]{p03}) and semi-continuity of index, one has 
\[
              \ind(\g_{(0)})^\xi\ge\ind\g_{(0)}\ge \ind\g . \qedhere
\]
\end{proof}
\noindent
The Killing form $\kappa$ on $\g$ induces the isomorphism $\tau:\g\to\g^*$ with 
$\tau(x)(y):=\kappa(x,y)$ for all $x,y\in\g$.
Clearly $\tau$ restricts to an isomorphism 
$\g_i\simeq \g^*_{m-i}$ for each $i$. 
Set $\xi_x:=\tau(x)$.
Having identified $\g^*$ and $\g_{(0)}^*$ as vector spaces, we may regard $\xi_x$ as an element of
$\g_{(0)}^*$. Then $(\g_{(0)})^{\xi_x}$ denotes the stabiliser of $\xi_x$ with respect to the coadjoint representation of $\g_{(0)}$.

\begin{prop}   \label{prop:1}
Let $x\in\g_1\subset\g$ be arbitrary.
\begin{itemize}
\item[\sf (i)] \ Upon the identification of $\g$ and $\g_{(0)}$,  the vector spaces $\g^x$ and 
$(\g_{(0)})^{\xi_x}$ coincide. 
\item[\sf (ii)] \ Moreover, the Lie algebra $\g^x$ is $\vth$-stable and its $\vth$-contraction
$(\g^x)_{(0)}$ is isomorphic to $(\g_{(0)})^{\xi_x}$ as a Lie algebra.
\end{itemize}
\end{prop}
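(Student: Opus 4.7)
The plan is to exploit the homogeneity of $x\in\g_1$ together with the invariance of the Killing form $\kappa$, and to compare the two stabilisers piece by piece under the $\BZ_m$-grading.

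First I would verify that $\g^x$ is $\vth$-stable by decomposing $y=\sum_i y_i$ with $y_i\in\g_i$: since $x\in\g_1$, the bracket $[y_i,x]$ lies in $\g_{i+1\bmod m}$, so the condition $[y,x]=0$ splits as $[y_i,x]=0$ for each $i$. Hence $\g^x=\bigoplus_i(\g^x\cap\g_i)$ is a graded (in particular $\vth$-stable) Lie subalgebra of $\g$. This half of (ii) is immediate.

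Next I would compute $(\g_{(0)})^{\xi_x}$. Because $x\in\g_1$, the functional $\xi_x=\kappa(x,\cdot)$ vanishes off $\g_{m-1}$, so for $y=\sum_i y_i$ and any homogeneous $z_j\in\g_j$ only the component of $[y,z_j]_{(0)}$ lying in $\g_{m-1}$ matters. By the defining formula \eqref{eq:g-and-g_0}, that component equals $[y_{m-1-j},z_j]_{(0)}=[y_{m-1-j},z_j]$, since $(m-1-j)+j=m-1\le m-1$. Using $\kappa$-invariance,
\[
\xi_x\bigl([y,z_j]_{(0)}\bigr)=\kappa\bigl(x,[y_{m-1-j},z_j]\bigr)=\kappa\bigl([x,y_{m-1-j}],z_j\bigr).
\]
Since $[x,y_{m-1-j}]\in\g_{m-j\bmod m}$ and $\kappa$ pairs $\g_j$ non-degenerately with $\g_{m-j\bmod m}$, the vanishing of this expression for all $z_j$ is equivalent to $[x,y_{m-1-j}]=0$. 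Letting $j$ run through $0,1,\dots,m-1$, I would conclude that $y\in(\g_{(0)})^{\xi_x}$ if and only if $[x,y_i]=0$ for every $i$, i.e.\ $y\in\g^x$. This gives (i).

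Finally, for (ii), both $(\g^x)_{(0)}$ and $(\g_{(0)})^{\xi_x}$ carry Lie brackets that I would now check coincide. On $\g^x\cap\g_i$ and $\g^x\cap\g_j$, the contraction bracket of $\g^x$ (formed inside the graded algebra $\g^x=\bigoplus_i\g^x\cap\g_i$) is given by the same recipe \eqref{eq:g-and-g_0} as the bracket of $\g_{(0)}$ restricted to the coadjoint stabiliser, namely the $\g$-bracket when $i+j\le m-1$ and zero otherwise. Hence the identification of (i) is an isomorphism of Lie algebras.

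The proof is really bookkeeping with the grading; the only thing to be careful about is the step that uses the non-degenerate pairing of $\g_j$ with $\g_{m-j\bmod m}$ to move from vanishing of $\kappa([x,y_{m-1-j}],\g_j)$ for every $j$ to vanishing of each $[x,y_i]$, and the fact that all $y_i$ get captured as $j$ varies. No genuine obstacle is anticipated.
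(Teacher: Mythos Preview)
Your proposal is correct and follows essentially the same route as the paper: both arguments reduce to comparing homogeneous pieces, use that $\xi_x$ is supported on $\g_{m-1}$ so only $[y_{m-1-j},z_j]_{(0)}=[y_{m-1-j},z_j]$ contributes, and then invoke invariance of $\kappa$ together with the non-degenerate pairing $\g_j\times\g_{m-j\bmod m}\to\bbk$ to conclude $[x,y_i]=0$ for each $i$. Your treatment is in fact slightly more explicit (you spell out the $\vth$-stability of $\g^x$ and the fact that all indices $i$ are hit as $j$ varies), but there is no substantive difference.
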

\begin{proof}
{\sf (i)} \ Since the Lie algebra $\g_{(0)}$ is $\BN$-graded, $(\g_{(0)})^{\xi_x}$ is $\BN$-graded as well. 
On the other hand, $\g^x$ inherits the $\BZ_m$-grading from $\g$. Let us show that
the vector spaces $\g^x\cap\g_i$ and $(\g_{(0)})^{\xi_x}\cap\g_i$ are equal for each $i$.
Let $\ad^*_{(0)}$ denote the coadjoint representation of $\g_{(0)}$. For $y\in\g_j$, we have 

$[x,y]\in\begin{cases}\g_{j+1}, & 0\le j\le m{-}2 \\ \g_0, & j=m-1 \end{cases}$ \quad and \  
$\ad^*_{(0)}(y)(\xi_x)\in \g^*_{m-1-j}$ for $j=0,1,\dots,m{-}1$. \\ 
For any $j$, we then obtain
\[
 \ad_0^*(y)\xi_x=0 \ \Longleftrightarrow \  \xi_x([y,\g_{m-1-j}])=0 \  \Longleftrightarrow \ 
 \kappa([x,y], \g_{m-1-j})=0 \  \Longleftrightarrow \ [x,y]=0.
\]
This proves {\sf (i)}. 

{\sf (ii)} This follows from {\sf (i)} and the general relationship between the Lie brackets of the initial Lie 
algebra and a $\BZ_m$-contraction of it, cf.~\eqref{eq:g-and-g_0}.
\end{proof}

\begin{cl}      \label{cor:2}
If there is an $x\in \g_{1}$ such that\/ $\ind (\g^{x})_{(0)}=\ind\g^{x}$, then\/ $\ind\g_{(0)}=\ind\g$.
\end{cl}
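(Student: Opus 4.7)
The plan is to chain together Proposition~\ref{prop:1}, the Elashvili conjecture (now a theorem for reductive $\g$), and Lemma~\ref{lm:1}, applied to the covector $\xi_x\in\g_{(0)}^*$ associated to the given $x\in\g_1$ via the Killing form isomorphism $\tau$.

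First I would observe, using Proposition~\ref{prop:1}{\sf (ii)}, that the Lie algebras $(\g^x)_{(0)}$ and $(\g_{(0)})^{\xi_x}$ are isomorphic, and therefore
\[
   \ind(\g_{(0)})^{\xi_x}=\ind(\g^x)_{(0)}.
\]
By the hypothesis of the corollary, the right-hand side equals $\ind\g^x$. Next, since $\g$ is reductive and $x\in\g$ is arbitrary, the Elashvili conjecture yields $\ind\g^x=\rk\g=\ind\g$. Combining these three equalities gives
\[
   \ind(\g_{(0)})^{\xi_x}=\ind\g.
\]

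Finally, I would invoke Lemma~\ref{lm:1} with $\xi=\xi_x$: the lemma says precisely that if $(\g_{(0)})^\xi$ already has index $\ind\g$ for some $\xi\in\g_{(0)}^*$, then the semi-continuity sandwich $\ind(\g_{(0)})^\xi\ge\ind\g_{(0)}\ge\ind\g$ forces $\ind\g_{(0)}=\ind\g$, which is the desired conclusion.

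The only non-routine ingredient is the use of the Elashvili conjecture $\ind\g^x=\ind\g$ for reductive $\g$; everything else is formal manipulation of the previously established identifications. Since Elashvili's conjecture is fully established in the reductive case, there is no real obstacle. I should note that, by Lemma~\ref{lm:ss}, it is enough to verify this argument for $\g$ semisimple, where the Elashvili conjecture is available in its classical form.
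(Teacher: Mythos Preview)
Your proof is correct and follows exactly the same route as the paper's: chain Proposition~\ref{prop:1}{\sf (ii)}, the hypothesis, and the Elashvili conjecture to obtain $\ind(\g_{(0)})^{\xi_x}=\ind(\g^{x})_{(0)}=\ind\g^{x}=\ind\g$, then invoke Lemma~\ref{lm:1}. The closing remark about Lemma~\ref{lm:ss} is harmless but unnecessary, since the Elashvili conjecture holds for reductive $\g$ directly.
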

\begin{proof}
One has  $\ind(\g_{(0)})^{\xi_x}=\ind(\g^{x})_{(0)}=\ind\g^{x}=\ind\g$, where the last equality is the celebrated {\sl Elashvili conjecture} proved via contributions of many people, see~\cite{CM}. Then Lemma~\ref{lm:1} applies.
\end{proof}

These results yield the {\it\bfseries induction step} for computing $\ind\g_{(0)}$. If $\g$ is semisimple and 
$x\in\g_1$ is a nonzero semisimple element, then $\g^x\subsetneq \g$, $\g^x$ is reductive, 
$\ind\g^x=\ind\g$, and $\vth$ preserves $\g^x$. Hence it suffices to verify Conjecture~\ref{conj1} for the 
smaller semisimple Lie algebra $[\g^x,\g^x]$. One can perform such a step as long as $\g_1$ contains 
semisimple elements. The base of induction is the case in which $\g_1$ contains 
no nonzero semisimple elements. Then the existence of the Jordan decomposition in $\g_1$~\cite[\S\,1.4]{vi76} 
implies that all elements of $\g_1$ are nilpotent. Actually,  the `base' can be achieved in just one step.
Recall from \cite{vi76} that a {\it Cartan subspace\/} of $\g_1$ is a maximal subspace $\ce$ consisting of
pairwise commuting semisimple elements. By~\cite[\S\,3,4]{vi76}, all Cartan subspaces are 
$G_0$-conjugate and $\dim\ce=\dim \g_1\md G_0$. The number $\dim\ce$ is called the {\it rank\/} of 
$(\g,\vth,m)$. We also denote it by $\rk\!(\g_0,\g_1)$. If $x\in \ce$ is a generic element, then 
$\es=[\g^x,\g^x]$ has the property that $\es_1$ consists of nilpotent elements.

Thus, in order to confirm Conjecture~\ref{conj1}, one should be able to handle the automorphisms 
$\vth$ of semisimple Lie algebras $\g$ such that $\g_1\subset\N$.
Using previous results, we can do it now for $m=3$ and for $m=4,5$ (with some reservations, see Proposition~\ref{prop:3}).

\begin{prop}  \label{prop:2}
If $m=3$, then $\ind\g_{(0)}=\ind\g$.
\end{prop}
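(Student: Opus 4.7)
My plan is to proceed by induction on $\dim\g$ and reduce via Corollary~\ref{cor:2}: it suffices to exhibit some $x\in\g_1$ with $\ind(\g^x)_{(0)}=\ind\g^x$. By Lemma~\ref{lm:ss} I may assume $\g$ is semisimple; the nontrivial case is $\vth\ne 1$, so $\g_1\ne 0$. The inequality $\ind\g_{(0)}\ge\ind\g$ is automatic by semi-continuity, and the work is to prove the reverse.

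\textbf{Case 1: $\ce\ne 0$.} This is the standard induction step indicated in the discussion preceding the statement. Pick a nonzero semisimple $x\in\ce\subset\g_1$. Then $\g^x\subsetneq\g$ is a proper $\vth$-stable reductive subalgebra with $\ind\g^x=\rk\g=\ind\g$ (Elashvili). The derived algebra $[\g^x,\g^x]$ is $\vth$-stable, semisimple, and of strictly smaller dimension, so the induction hypothesis yields $\ind([\g^x,\g^x])_{(0)}=\rk[\g^x,\g^x]$; the centre of $\g^x$ is abelian, its $\vth$-contraction is again abelian, and both contribute the same to the index. Hence $\ind(\g^x)_{(0)}=\ind\g^x=\ind\g$, and Corollary~\ref{cor:2} concludes the case.

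\textbf{Case 2: $\ce=0$ (base case).} Here $\g_1\subset\N$, and the special feature of $m=3$ enters: the nilpotent radical $\n=\g_1\oplus\g_2$ of $\g_{(0)}=\g_0\ltimes\n$ is $2$-step nilpotent, with $[\g_1,\g_1]_{(0)}\subset\g_2$ and $\g_2$ central in $\n$ (since $[\g_2,\g_2]_{(0)}=[\g_1,\g_2]_{(0)}=0$). My plan is to choose $x\in\g_1$ in the open $G_0$-orbit---which exists because $\bbk[\g_1]^{G_0}=\bbk$ when $\ce=0$---and analyse the $\BZ_3$-graded centraliser $\g^x=\g_0^x\oplus\g_1^x\oplus\g_2^x$ through the kernels of the maps $\ad(x)\colon\g_i\to\g_{i+1}$. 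Then $\ind(\g^x)_{(0)}$ can be computed via a Rais-type formula for indices of semidirect products by a $2$-step nilpotent ideal (or, failing a uniform argument, by enumerating the cases from Vinberg's classification in which $\ce=0$ for a $\BZ_3$-grading of a simple $\g$; by the indecomposability reduction one need only treat simple or cyclic-sum cases). One then verifies $\ind(\g^x)_{(0)}=\rk\g=\ind\g^x$ and applies Corollary~\ref{cor:2}.

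The principal obstacle is Case 2: the centraliser $\g^x$ of a nilpotent $x$ is not reductive, so the induction does not directly reduce the problem, and the index $\ind(\g^x)_{(0)}$ must be extracted from the explicit $\BZ_3$-graded structure rather than by recursion. The advantage of $m=3$, which one hopes makes a uniform direct computation feasible, is the $2$-step nilpotent shape of the radical of the contraction---precisely the reason this case is cleaner than $m=4,5$.
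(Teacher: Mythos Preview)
Your Case~1 matches the paper's reduction. The gap is in Case~2: you correctly pick $x$ in the dense $G_0$-orbit and set up the graded centraliser, but you then stop short and defer to a Rais-type formula or a case enumeration. Neither is needed, and as written Case~2 is not a proof but a programme.

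The observation you are missing is that $\g_2^x=0$. Density of $G_0{\cdot}x$ in $\g_1$ means $[\g_0,x]=\g_1$, so $\ad x:\g_0\to\g_1$ is surjective. Since the Killing form pairs $\g_0$ with $\g_0$ and $\g_1$ with $\g_2$ nondegenerately, and $\ad x$ is skew with respect to $\kappa$, the map $\ad x:\g_2\to\g_0$ is injective; hence $\g_2^x=0$ and $\g^x=\g_0^x\oplus\g_1^x$. Now $[\g_1^x,\g_1^x]\subset\g_2^x=0$, so every bracket in $\g^x$ lives in degrees $i+j\le 2=m-1$, and by \eqref{eq:g-and-g_0} the contraction does nothing: $(\g^x)_{(0)}\simeq\g^x$ as Lie algebras. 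Then $\ind(\g^x)_{(0)}=\ind\g^x=\ind\g$ by Elashvili, and Corollary~\ref{cor:2} finishes. This is exactly the paper's argument.

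A minor point: the existence of the open $G_0$-orbit is not a consequence of $\bbk[\g_1]^{G_0}=\bbk$ alone; the correct reason is Vinberg's finiteness of nilpotent $G_0$-orbits in $\g_1$.
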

\begin{proof}
By the inductive procedure above, we may assume that $\g_1\subset\N$. Then $G_0$ has finitely many 
orbits in $\g_1$~\cite[\S\,2.3]{vi76}. Take $x\in\g_1$ from the dense $G_0$-orbit. Then $[\g_0,x]=\g_1$ 
and hence $\g^{x}$ has the trivial projection to $\g_2$, i.e., $\g^{x}=\g_0^{x}\oplus\g_1^{x}$. This implies 
that $[\g_1^{x},\g_1^{x}]=0$ and therefore the Lie algebras $\g^{x}$ and $\g^{x}_{(0)}$ are isomorphic. 
Since $\ind\g^x=\ind\g$ by the {\sl Elashvili conjecture},  the assertion follows from Corollary~\ref{cor:2}.
\end{proof}

Recall that the action of a reductive group $H$ on an irreducible affine variety $X$ is {\it stable}, if the 
union of all closed $H$-orbits is dense in $X$. For $x\in\g_1=X$ and $H=G_0$, the orbit  $G_0{\cdot}x$ 
is closed if and only if $x$ is semisimple in $\g$~\cite[\S\,2.4]{vi76}. Therefore, the linear action of $G_0$ 
on $\g_1$ is stable if and only if the subset of semisimple elements of $\g$ is dense in $\g_1$. 

\begin{prop}       \label{prop:3}
Suppose that $m=4,5$ and the action $(G_0:\g_1)$ is stable. Then $\ind\g_{(0)}=\ind\g$.
\end{prop}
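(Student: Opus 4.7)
The plan is to iterate the inductive reduction set out immediately before Proposition~\ref{prop:2}. By Lemma~\ref{lm:ss} we may assume $\g$ is semisimple and argue by induction on $\dim\g$. Under the stability hypothesis the $\vth$-semisimple locus of $\g_1$ is dense, so a Cartan subspace $\ce \subset \g_1$ is non-zero; pick a generic semisimple $x \in \ce$. Then $\g^x = \g^{\ce}$ is a proper reductive $\vth$-stable subalgebra with $\ind\g^x = \ind\g$, and crucially $\ce \subset \z(\g^x)$. Decomposing $\g^x = \z(\g^x) \oplus \es$ $\vth$-equivariantly with $\es := [\g^x,\g^x]$, the abelian centre $\z(\g^x)$ contributes by its full dimension to both $\ind (\g^x)_{(0)}$ and $\ind \g^x$, so Corollary~\ref{cor:2} reduces the claim to the equality $\ind \es_{(0)} = \rk \es$.

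A short argument now gives $\es_1 \subset \N_\es$: any $z \in \es_1$ that is semisimple in $\g$ automatically commutes with all of $\ce$ (since $\es \subset \g^{\ce}$), so $\ce \oplus \bbk z$ would be an abelian subspace of $\g_1$ consisting of semisimple elements; maximality of $\ce$ forces $z \in \ce \cap \es = 0$. This places us in the base case of the procedure. By Vinberg's finiteness of $E_0$-orbits on $\N \cap \es_1 = \es_1$, there is a dense orbit; choosing $y$ from it yields $[\es_0, y] = \es_1$, which by Killing-form duality between $\es_i$ and $\es_{m-i}$ is equivalent to $\es^y \cap \es_{m-1} = 0$. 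Hence $\es^y \subset \bigoplus_{i=0}^{m-2}\es^y_i$, and a final application of Corollary~\ref{cor:2} applied inside $\es$ reduces the problem to $\ind(\es^y)_{(0)} = \ind \es^y$.

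The restriction $m \in \{4,5\}$ enters decisively at this final step. For $m = 4$ the only bracket of $\es^y$ that is contracted to zero in $(\es^y)_{(0)}$ is $[\es^y_2,\es^y_2] \subset \es^y_0$, so $(\es^y)_{(0)}$ is the semidirect product $\es^y_0 \ltimes (\es^y_1 \oplus \es^y_2)$ with a two-step nilpotent ideal; for $m = 5$ only the brackets $[\es^y_2,\es^y_3]$ and $[\es^y_3,\es^y_3]$ are contracted. In either case one produces a covector $\xi \in (\es^y)^*$ whose stabilisers with respect to $[\,\cdot\,,\,\cdot\,]_{\es^y}$ and $[\,\cdot\,,\,\cdot\,]_{(0)}$ have the same dimension (for example by choosing $\xi$ whose restriction to the components involved in the contracted brackets is suitably generic), giving $\ind(\es^y)_{(0)} \le \ind \es^y$; combined with the semi-continuity bound in the opposite direction and Elashvili's equality $\ind \es^y = \ind \es$, the required index equality follows.

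The main obstacle is precisely this last construction of a covector $\xi$ witnessing the index equality for the contracted semidirect-product Lie algebra. The smallness of $m$ keeps the list of contracted brackets down to one (for $m = 4$) or two (for $m = 5$), which is what makes a direct case-by-case treatment feasible; extending the method to larger $m$ would require handling an increasing family of simultaneously contracted higher-degree brackets, which is exactly the general obstruction recorded in Conjecture~\ref{conj1}.
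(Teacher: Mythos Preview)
Your reduction stops one step too early, and the missing step is exactly where the stability hypothesis does its real work. You use stability only to guarantee that a Cartan subspace $\ce\subset\g_1$ is nonzero, and then you correctly argue that $\es_1\subset\N_\es$. But stability is \emph{inherited} by the centraliser: the action $(G_0^x:\g_1^x)$ is again stable, hence so is $(E_0:\es_1)$ for $\es=[\g^x,\g^x]$. Stability means the semisimple locus is dense in $\es_1$; combined with $\es_1\subset\N_\es$ this forces $\es_1=0$, and then also $\es_{m-1}=0$. At that point $\es$ is, for $m=4$, just $\es_0\oplus\es_2$ (a $\BZ_2$-grading), and for $m=5$ it is $\es_0\oplus(\es_2\oplus\es_3)$ with $[\es_2\oplus\es_3,\es_2\oplus\es_3]\subset\es_0$ (again a $\BZ_2$-grading). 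In both cases $\es_{(0)}$ is a $\BZ_2$-contraction, and the known equality $\ind\es_{(0)}=\ind\es$ for $\BZ_2$-contractions finishes the proof immediately. This is the paper's argument.

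Because you miss $\es_1=0$, you are forced into a second reduction via a generic nilpotent $y\in\es_1$ and must show $\ind(\es^y)_{(0)}=\ind\es^y$. Your final paragraph does not prove this: the sentence ``one produces a covector $\xi\in(\es^y)^*$ whose stabilisers \dots\ have the same dimension (for example by choosing $\xi$ whose restriction to the components involved in the contracted brackets is suitably generic)'' is an assertion, not an argument. And it is not a routine matter: your scheme, if it worked, would apply verbatim to any semisimple $\g$ with $m=4$ and $\g_1\subset\N$ (take $\es=\g$), in particular to the $\GR{F}{4}$ case of Example~\ref{ex:F4}, which the paper explicitly records as open. So the gap is genuine, and the fix is not to strengthen the covector argument but to exploit the inherited stability to get $\es_1=0$ in the first place.
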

\begin{proof}
If $x\in\g_1$ is semisimple, then the action $(G^x_0:\g^x_1)$ is again stable. Therefore, for a generic 
semisimple $x\in\ce\subset \g_1$, the induction step
provides the semisimple Lie algebra $\es=[\g^x,\g^x]$ such that $\es_1=0$. Then $\es_{m-1}=0$ as well.

\un{$m=4$:} \ 
Here $\es=\es_0\oplus\es_2$ and  $\vth|_\es$ is of order $2$. Therefore, 
$\es_{(0)}=\es_0\rtimes \es_2^{\sf ab}$ is a $\BZ_2$-contraction of $\es$ and hence $\ind\es_{(0)}=\ind\es$.

\un{$m=5$:} \ Now $\es=\es_0\oplus\es_2\oplus\es_3$ and $\vth\vert_\es$ is still of order 5 (if
$\es_2\oplus\es_3\ne 0$). The absence of $\es_1$ and $\es_4$ implies that
$[\es_2\oplus\es_3,\es_2\oplus\es_3]\subset \es_0$, i.e., $\es$ can be regarded as $\BZ_2$-graded 
algebra. Thus, by~\eqref{eq:g-and-g_0}, $\es_{(0)}\simeq\es_0\rtimes (\es_2\oplus\es_3)^{\sf ab}$ is again a 
$\BZ_2$-contraction and hence $\ind\es_{(0)}=\ind\es$.
\end{proof}

\begin{ex}              \label{ex:F4}
For $\g$ of type $\GR{F}{4}$, the affine Dynkin diagram is
\quad  
\raisebox{-2ex}{ \begin{picture}(118,35)(-12,-5)
\setlength{\unitlength}{0.018in} 
\multiput(-10,8)(20,0){5}{\circle{6}}
\put(-13,-3.8){\footnotesize $\ap_1$} \put(7,-3.8){\footnotesize $\ap_2$} \put(27,-3.8){\footnotesize $\ap_3$}
\put(47,-3.8){\footnotesize $\ap_4$} \put(67,-3.8){\footnotesize $\ap_0$}
\put(-7,8){\line(1,0){14}}
\put(33,8){\line(1,0){14}} \put(53,8){\line(1,0){14}}
\multiput(16.34,7)(0,2){2}{\line(1,0){10.8}}
\put(12.8,5.72){$<$} 
\put(-12.5,14.2){\footnotesize $2$}  \put(7.5,14.2){\footnotesize $4$} \put(27.5,14.2){\footnotesize $3$}
\put(47.5,14.2){\footnotesize $2$} \put(67.5,14.2){\footnotesize $1$}
\put(75,6){.}
\end{picture} }
Take $\vth$ with the following Kac diagram

\begin{center}
$\eus K(\vth)$: \quad  
\raisebox{-2ex}{ \begin{picture}(116,34)(-23,-5)
\setlength{\unitlength}{0.018in} 
\multiput(-10,8)(20,0){5}{\circle{6}}
\put(-13,-4){\footnotesize $\vp'$} 
\put(27,-4){\footnotesize $\vp_1$}
\put(47,-4){\footnotesize $\vp_2$} \put(67,-4){\footnotesize $\vp_3$}
\put(-7,8){\line(1,0){14}}
\put(33,8){\line(1,0){14}} \put(53,8){\line(1,0){14}}
\multiput(16.34,7)(0,2){2}{\line(1,0){10.8}}
\put(12.8,5.72){$<$} 
\put(-12.5,14.2){\footnotesize {\it\bfseries 0}}  \put(7.5,14.2){\footnotesize {\it\bfseries 1}} \put(27.5,14.2){\footnotesize  {\it\bfseries 0}}
\put(47.5,14.2){\footnotesize  {\it\bfseries 0}} \put(67.5,14.2){\footnotesize  {\it\bfseries 0}}
\put(77,6){.}
\end{picture} }
\end{center}
\vskip1ex
\noindent
Then $|\vth|=4$, $\g_0=\GR{A}{3}\times \GR{A}{1}$, and $\g_1=\VV_{\vp_3}\otimes\VV_{\vp'}$ (or
$\g_1=\vp_3\vp'$) as a $\g_0$-module.  For the reader's convenience, we also provide the (numbering of the) fundamental weights of $\g_0$.
Since $G_0$ has a dense orbit in $\g_1$, we have $\g_1\subset \N$ and the induction step does not 
apply. Actually, our methods, including those developed in Section~\ref{sect:inner-auto}, do not work 
here, and the exact value of $\ind\g_{(0)}$ is not known yet.
\end{ex}

\section{Inner automorphisms, $\BZ$-gradings, and parabolic contractions of $\g$} 
\label{sect:inner-auto}

\noindent
In this section, we prove that, for {\bf certain} $\vth\in{\sf Int}^f(\g)$, the $\vth$-contraction $\g_{(0)}$ is 
isomorphic to a parabolic contraction of $\g$. Then comparing the results obtained earlier for parabolic 
contractions~\cite{sel} and $\vth$-contractions~\cite{py21} yields new knowledge in both instances.
 
First, we need an explicit description of $\vth\in {\sf Int}^f(\g)$ via a $\BZ$-grading of $\g$ 
associated with the Kac diagram $\eus K(\vth)$. Recall that $\eus K(\vth)$ is the affine Dynkin diagram of $\g$, equipped with numerical labels $p_0,p_1,\dots,p_l$, where $p_0$ is the label at the extra node.

As in Section~\ref{subs:kac},  $\l=\rk\g$, $\Pi=\{\ap_1,\dots,\ap_l\}$, 
$\delta=\sum_{i=1}^l n_i \ap_i\in\Delta^+$ is the highest root, $n_0=1$, 
and $m= |\vth|=\sum_{i=0}^l p_i n_i=p_0+\sum_{i=1}^l p_i n_i$. 

The labels $(p_1,\dots,p_l)$ determine the $\BZ$-grading $\g =\bigoplus_{j\in\BZ}\g(j)$ such that 
$\te\subset\g(0)$ and $\g^{\ap_i}\in \g(p_i)$ for $i=1,\dots,l$. Write $[\gamma:\ap_i]$ for the coefficient of
$\ap_i$ in the expression of $\gamma\in\Delta$ via $\Pi$. Letting 
$d(\gamma):=\sum_{i=1}^l [\gamma:\ap_i]p_i$,  we see that the root space $\g^\gamma$ belongs to 
$\g(d(\gamma))$. We say that $d(\gamma)$ is the $(\BZ,\vth)$-{\it degree}\/ of the root $\gamma$. For
this $\BZ$-grading, we have 
\begin{itemize}
\item \ $\p=\bigoplus_{j\ge 0}\g(j)=:\g({\ge}0)$ is a parabolic subalgebra of $\g$ with Levi subalgebra 
$\g(0)$,
\item \ $\n^-=\bigoplus_{j< 0}\g(j)=:\g({<}0)$ is the nilradical of an opposite parabolic subalgebra,
\end{itemize}
and $\g=\p\oplus\n^-$. In this setting, one has $d(\beta)\le d(\delta)$ for any $\beta\in\Delta^+$ and 
\beq            \label{eq:vysota}
   \max\{j\mid \g(j)\ne0\} =\sum_{i=1}^l n_i p_i =d(\delta)=m-p_0 \le m . 
\eeq
The $\BZ_m$-grading associated with $(p_0,p_1,\dots,p_l)$ is obtained from this $\BZ$-grading by 
``glueing''  modulo $m$. That is, for $j=0,1,\dots,m-1$, we set $\g_j=\bigoplus_{k\in\BZ}\g(j+km)$. The 
resulting decomposition 
\[
    \textstyle \g=\bigoplus_{j=0}^{m-1}\g_j
\]
is the $\BZ_m$-grading associated with $\vth=\vth(p_0,\dots,p_l)$. It follows from \eqref{eq:vysota}
that $\g_i=\g(i)\oplus \g(i-m)$ for $i=1,2,\dots,m-1$ (the sum of at most two spaces) and 
$\g_0=\g(-m)\oplus\g(0)\oplus\g(m)$ (at most three spaces). Moreover, $\g(0)=\g_0$ if and only if
$d(\delta)<m$, i.e., $p_0\ne 0$.

For $\mu\in \Delta$, let $\ov{d(\mu)}$ be the unique element of $\{0,1,\dots,m-1\}$ such that
$\g^\mu\subset \g_{\ov{d(\mu)}}$. Then
\begin{gather}   \label{eq:d-bar}
\text{if $1\le d(\mu)< m$, then $\ov{d(\mu)}=d(\mu)$ and $\ov{d(-\mu)}=m-d(\mu)$;} 
\\  
\text{  if $d(\mu)=0,\pm m$, then $\ov{d(\pm\mu)}=0$.}  \notag
\end{gather}
Using this description, we prove below that, for a wide class of inner automorphisms $\vth$, the 
$\vth$-contraction $\g_{(0)}$ admits a useful alternate description as a semi-direct product. Recall the
necessary setup. If $\h\subset\g$ is a subalgebra, then $\h\ltimes (\g/\h)^{\sf ab}$ stands for the 
corresponding {\it In\"on\"u--Wigner contraction}\/ of $\g$, see~\cite[Sect.\,2]{bn-oy}. Here the superscript 
``{\sf ab}''  means that the $\h$-module $\g/\h$ is an abelian ideal of this semi-direct product. 
Let
$\h=\p$ 
be a standard parabolic subalgebra associated with $\Pi$. 
Then $\g/\p$ can be identified with $\n^-$ as a vector space, and In\"on\"u--Wigner contractions of 
the form $\p\ltimes (\n^-)^{\sf ab}$, which have been studied in~\cite{sel}, are called {\it parabolic 
contractions} of $\g$.

\begin{thm}          \label{thm:semidir}
Suppose that $\vth\in {\sf Int}^f(\g)$ and $p_0=p_0(\vth)>0$. Let $\p$ and $\n^-$ be the subalgebras 
associated with $p_1,\dots,p_l$ as above. Then $\g_{(0)}\simeq \p\ltimes (\n^-)^{\sf ab}$.
\end{thm}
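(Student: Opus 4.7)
The plan is to exhibit a Lie algebra isomorphism $\Phi:\g_{(0)}\to\p\ltimes(\n^-)^{\sf ab}$ that is the identity on the underlying vector space, once the two sides are identified via a refined decomposition of each $\g_i$. The assumption $p_0>0$ enters through the bound $d(\delta)=m-p_0<m$ from \eqref{eq:vysota}: it forces $|d(\gamma)|\le m-p_0<m$ for every root $\gamma$, so $\g(\pm m)=0$. Consequently $\g_0=\g(0)$ and, for $1\le i\le m-1$, one has $\g_i=\g(i)\oplus\g(i-m)$ with $\g(i)\subset\p$ and $\g(i-m)\subset\n^-$; summing over $i$ recovers the vector-space decomposition $\g=\p\oplus\n^-$.

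With this in hand, I would write each $x\in\g_i$ uniquely as $x=x^++x^-$ with $x^+\in\g(i)\subset\p$ and $x^-\in\g(i-m)\subset\n^-$ (and $x^-=0$ if $i=0$), declare $\Phi$ to be the corresponding identity of vector spaces, and verify bracket compatibility on homogeneous $x\in\g_i$, $y\in\g_j$. This splits according to whether $i+j\le m-1$ or $i+j\ge m$.

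In the first case, $[x,y]_{(0)}=[x,y]_\g$ by \eqref{eq:g-and-g_0}, and $[x,y]_\g$ expands as $[x^+,y^+]+[x^+,y^-]+[x^-,y^+]+[x^-,y^-]$, whose summands live in $\g(i+j)\subset\p$, $\g(i+j-m)\subset\n^-$, $\g(i+j-m)\subset\n^-$, and $\g(i+j-2m)$ respectively. The first three terms match the semi-direct bracket: the first coincides with $[x^+,y^+]_\p$, and the cross-terms already lie in $\n^-$, so they equal the $\p$-action on $\n^-\cong\g/\p$ applied to $x^\pm$ and $y^\mp$. The delicate term is $[x^-,y^-]\in\g(i+j-2m)$; but $\g(k)=0$ for $k<-(m-p_0)$, and $i+j\le m-1<m+p_0$ yields $i+j-2m<-(m-p_0)$, so $[x^-,y^-]=0$. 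This is the one place where $p_0>0$ is essential. In the second case $[x,y]_{(0)}=0$, and on the semi-direct side $[x^+,y^+]\in\g(i+j)$ vanishes because $i+j\ge m>m-p_0$; the cross-terms $[x^\pm,y^\mp]\in\g(i+j-m)$ lie in $\p$ (as $i+j-m\ge 0$), so their $\n^-$-projections — the actual semi-direct brackets — are zero; and $[x^-,y^-]$ vanishes because $(\n^-)^{\sf ab}$ is abelian. The main obstacle is really the book-keeping in the first case that forces $[x^-,y^-]=0$; everything else is a mechanical unwinding of the semi-direct-product definition, after which $\Phi$ is manifestly bijective and hence the claimed isomorphism.
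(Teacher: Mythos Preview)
Your argument is correct and follows essentially the same direct-verification strategy as the paper, which also checks that $[\ ,\ ]_{(0)}$ on $\g=\p\oplus\n^-$ agrees with the semidirect-product bracket; the paper organises the case split by whether the root-space factors lie in $\p$ or in $\n^-$, whereas you split first by whether $i+j<m$ and then decompose each factor, but the content is the same. One minor correction: the vanishing $[x^-,y^-]\in\g(i+j-2m)=0$ in your first case holds regardless of $p_0$ (since $i+j-2m\le -m-1<-d(\delta)$ always), so that is not where $p_0>0$ is essential; the hypothesis is genuinely used in the setup ($\g_0=\g(0)$) and in your second case, where $[x^+,y^+]\in\g(i+j)=0$ needs $i+j\ge m>m-p_0=d(\delta)$.
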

\begin{proof}
Since $p_0>0$, we have $\g(0)=\g_0$ and $d(\mu)<m$ for any $\mu\in\Delta^+$. Hence
$\ov{d(\mu)}=d(\mu)$ for {\bf every} $\mu\in\Delta^+$ and $\ov{d(-\mu)}=m-d(\mu)$ if $d(\mu)\ge 1$.
Set $\Delta(\p)=\{\gamma\in \Delta\mid d(\gamma)\ge 0\}$ and $\Delta(\n^-)=\Delta\setminus
\Delta(\p)$. Then $\Delta(\p)$ (resp. $\Delta(\n^-)$) is the set of roots of $\p$ (resp. $\n^-$).

Using this notation and the above relationship between $\BZ$ and $\BZ_m$-gradings, we now routinely 
verify that the Lie bracket in $\g_{(0)}$ coincides with that in $\p\ltimes (\n^-)^{\sf ab}$.

{\sf (1)} \ {\it The structure of\/ $(\p, [\ ,\,]_{(0)})$.}
If $\mu,\mu'\in\Delta(\p)$ and $\mu+\mu'$ is a root, then 
\[
    d(\mu), d(\mu'),d(\mu+\mu')\in [0, m-1].
\] 
(It is important here that $p_0>0$.) 
Then using \eqref{eq:g-and-g_0}, we get $[\g^{\mu},\g^{\mu'}]_{(0)}=[\g^{\mu},\g^{\mu'}]$. It is also clear that
$[\te,\g^{\mu}]_{(0)}=[\te,\g^{\mu}]$ for any $\mu\in\Delta(\p)$. Therefore, the
Lie brackets $[\ ,\,]$ and $[\ ,\,]_{(0)}$ coincide under the restriction to $\p$.

{\sf (2)} \ {\it The structure of\/ $(\n^-, [\ ,\,]_{(0)})$.} Let $d(\mu),d(\mu')\ge 1$, i.e., $-\mu,-\mu'\in \Delta(\n^-)$. Suppose that $\mu+\mu'$ is a root. Then
\[
    \ov{d(-\mu)}+\ov{d(-\mu')}=m-d(\mu)+(m-d(\mu'))=2m-d(\mu+\mu')>m .
\]
It follows that $[\g^{-\mu},\g^{-\mu'}]_{(0)}=0$, i.e., the space $\n^-$ is an abelian subalgebra of 
$\g_{(0)}$.

{\sf (3)} \ {\it The multiplication $[\p,\n^-]_{(0)}$.} Suppose that $\mu\in\Delta(\p)$, $-\mu'\in\Delta(\n^-)$,
and $\mu-\mu'\in\Delta$. 
\begin{itemize}
\item If $d(\mu')>d(\mu)$, then $\mu-\mu'\in\Delta(\n^-)$ and 
$\ov{d(\mu)}+\ov{d(-\mu')}=d(\mu)+m-d(\mu')<m$. Hence $[\g^{\mu},\g^{-\mu'}]_{(0)}=
[\g^{\mu},\g^{-\mu'}]\subset \n^-$.
\item If $d(\mu')\le d(\mu)$, then $\mu-\mu'\in\Delta(\p)$ and $\ov{d(\mu)}+\ov{d(-\mu')}\ge m$. Hence 
$[\g^{\mu},\g^{-\mu'}]_{(0)}=0$.
\item  It is also clear that $[\te,\g^{-\mu'}]_{(0)}=[\te,\g^{-\mu'}]$.
\end{itemize}
Thus, for all $x\in\p$ and $y\in\n^-$, the Lie bracket $[x,y]_{(0)}$ is computed as the initial 
bracket $[x,y]$ with the subsequent projection to $\n^-$ (w.r.{t.}{} the decomposition $\g=\p\oplus\n^-$).  
This precisely means that $\g_{(0)}$ and the semi-direct product $\p\ltimes(\n^-)^{\sf ab}$ are isomorphic 
as Lie algebras.
\end{proof}
 
Comparing our previous results for parabolic contractions $\p\ltimes(\n^-)^{\sf ab}$ (see~\cite{sel}) and 
$\BZ_m$-contractions $\g_{(0)}$ (see~\cite{OY,bn-oy,py21}), we gain new knowledge in both settings.

\begin{prop}       \label{prop:parab&index}
If $\vth\in {\sf Int}^f(\g)$ and $p_i(\vth)>0$ for some $i$ such that $n_i=1$, then $\g_{(0)}$ is a 
parabolic contraction of $\g$ and $\ind\g_{(0)}=\rk\g$.
\end{prop}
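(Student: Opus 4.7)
The plan is to reduce the hypothesis to the already-handled case $p_0(\vth)>0$ of Theorem~\ref{thm:semidir}. The key observation, made in Section~\ref{subsub:int}, is that whenever $n_j=1$, the $j$-th node of $\tilde{\eus D}(\g)$ may legitimately be regarded as the extra node: deleting it yields a copy of ${\eus D}(\g)$, and $\widehat\Pi\setminus\{\alpha_j\}$ becomes another system of simple roots for $\Delta$. Since Kac diagrams are considered up to the automorphism group of $\tilde{\eus D}(\g)$, there is a diagram automorphism sending the $i$-th node of our hypothesis to the extra node.

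First, I would apply this diagram automorphism. It re-presents the \emph{same} $\vth$ (only the $\vth$-stable Borel subalgebra containing $\te$ is re-chosen) by a Kac diagram whose new extra-node label equals the old $p_i>0$. The order $m=\sum_{j=0}^l n_jp_j$ is invariant under the swap because $n_i=1$, so the $\BZ_m$-grading of~\eqref{eq:grading} and the Lie algebra $\g_{(0)}$ are unchanged. In the new presentation the hypothesis $p_0>0$ of Theorem~\ref{thm:semidir} is met, and the theorem identifies $\g_{(0)}\simeq \p\ltimes(\n^-)^{\sf ab}$, where $\p$ and $\n^-$ are the parabolic subalgebra and opposite nilradical determined by the remaining new labels. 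This settles the first half of the proposition.

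For the index equality, I would invoke the results of~\cite{sel}, in which it is proved that every parabolic contraction of a semisimple Lie algebra satisfies $\ind(\p\ltimes(\n^-)^{\sf ab})=\rk\g$. Combined with the identification above, this yields $\ind\g_{(0)}=\rk\g$ immediately. The main conceptual obstacle is the reduction step: one must verify that re-presenting $\vth$ by a different Kac diagram does not alter the underlying periodic automorphism or its contraction. This is exactly the content of the convention that Kac diagrams are taken modulo $\Aut(\tilde{\eus D}(\g))$, so no genuine difficulty arises; everything else in the argument is a direct application of results quoted from \cite{sel} and from Theorem~\ref{thm:semidir}.
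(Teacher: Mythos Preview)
Your proposal is correct and follows essentially the same route as the paper's own proof: reduce to the case $p_0>0$ via an automorphism of $\tilde{\eus D}(\g)$ (equivalently, a new choice of Borel), apply Theorem~\ref{thm:semidir} to identify $\g_{(0)}$ with a parabolic contraction, and then cite \cite[Theorem\,4.1]{sel} for the index equality. The only point you state slightly more explicitly than the paper is the transitivity of $\Aut(\tilde{\eus D}(\g))$ on nodes with $n_i=1$, which is a standard fact and exactly what the paper is invoking when it says ``using an automorphism of $\tilde{\eus D}(\g)$''.
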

\begin{proof}
If $p_i(\vth)>0$ and $n_i=1$, then using an automorphism of $\tilde{\eus D}(\g)$, i.e., making another 
choice of $\be$, we can reduce the problem to the case $i=0$, see Section~\ref{subsub:int}. Hence 
$\g_{(0)}$ is a parabolic contraction by Theorem~\ref{thm:semidir}. 
By~\cite[Theorem\,4.1]{sel}, the index does not change for the parabolic contractions of $\g$,
i.e., $\ind(\p\ltimes(\n^-)^{\sf ab})=\ind \g$ for any parabolic subalgebra $\p\subset\g$.
\end{proof}

\begin{rmk}
If $p_i=0$ for all $i$ such that $n_i=1$, then 
the preceding approach fails and there seems to be no useful alternate description of $\g_{(0)}$.
\end{rmk}

The parabolic contractions of $\g$ are much more interesting than arbitrary In\"on\"u--Wigner 
contractions. Their structure is closely related to properties of the centralisers for the corresponding
Richardson orbit. Since $\p$ admits a complementary subspace $\n^-$, which is a Lie subalgebra,  the Lie--Poisson bracket 
associated with $\p\ltimes (\n^-)^{\sf ab}$ is compatible with the initial bracket on $\g$ 
(\cite[Lemma\,1.2]{bn-oy}). Then the Lenard--Magri scheme provides a {\sf PC} 
subalgebra of $\gS(\g)$, which is denoted by $\gZ(\p,\n^-)$. Let $[\ ,\,]_{(\p,\n^-)}$ denote the Lie bracket
for $\p\ltimes (\n^-)^{\sf ab}$. Then
we have the following properties of Poisson brackets and {\sf PC} subalgebras:
\begin{itemize}
\item[--] \ the {\sf PC}-subalgebra $\gZ(\g,\vth)$ is obtained via the application of the Lenard--Magri scheme to
the compatible Lie--Poisson brackets $[\ ,\,]$ and  $[\ ,\,]_{(0)}$;
\item[--] \  the {\sf PC}-subalgebra $\gZ(\p,\n^-)$ is obtained via the application of the Lenard--Magri scheme to
the compatible Lie--Poisson brackets $[\ ,\,]$ and $[\ ,\,]_{(\p,\n^-)}$;
\item[--] \  by Proposition~\ref{prop:parab&index}, if $p_i>0$ for some $i$ with $n_i=1$, then
$[\ ,\,]_{(0)}=[\ ,\,]_{(\p,\n^-)}$.
\end{itemize}
This leads to the following

\begin{cl}    \label{cor:Algebren=}
If $\vth\in {\sf Int}^f(\g)$ and $p_i>0$ for some $i$ such that $n_i=1$, then  $\gZ(\g,\vth)=\gZ(\p,\n^-)$.
\end{cl}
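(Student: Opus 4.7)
The plan is to deduce the equality directly from Theorem~\ref{thm:semidir} together with the observation that the Lenard--Magri construction of Section~\ref{subs:compatible} depends only on the underlying pencil of compatible Poisson brackets. The three bullet points immediately preceding the corollary already isolate what is needed: both $\gZ(\g,\vth)$ and $\gZ(\p,\n^-)$ are produced from a pencil of two compatible linear Poisson brackets on $\gS(\g)$, so equality of the two pencils implies equality of the two PC subalgebras.

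A mild reduction comes first. If the hypothesis is satisfied for some $i\ne 0$ with $n_i=1$ and $p_i>0$, then, as explained at the end of Section~\ref{subsub:int}, any node of $\tilde{\eus D}(\g)$ with $n_i=1$ can be promoted to the role of the extra node by a re-choice of the $\vth$-stable Borel subalgebra containing $\te$. This is an automorphism of $\tilde{\eus D}(\g)$ acting on the Kac labels, after which $p_0>0$; the pair $(\p,\n^-)$ appearing in the statement refers to the parabolic pair associated with the reindexed $(p_1,\dots,p_l)$. We may therefore assume $p_0>0$ from the outset.

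Under this assumption Theorem~\ref{thm:semidir} yields the Lie algebra isomorphism $\g_{(0)}\simeq \p\ltimes (\n^-)^{\sf ab}$, that is, the Lie brackets $[\ ,\,]_{(0)}$ and $[\ ,\,]_{(\p,\n^-)}$ on the underlying vector space $\g$ literally coincide. Hence the two pencils $\{\ ,\,\}+t\{\ ,\,\}_{(0)}$ and $\{\ ,\,\}+t\{\ ,\,\}_{(\p,\n^-)}$ on $\gS(\g)$ agree for every $t\in\BP^1$; therefore their sets $\BP_{\sf reg}$ agree, the Poisson centres $\cz_t$ agree for each $t\in\BP_{\sf reg}$, and the subalgebras of $\gS(\g)$ they generate are equal: $\gZ(\g,\vth)=\gZ(\p,\n^-)$. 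There is essentially no obstacle here; the whole content of the corollary is the bookkeeping translation of the already-established equality of Lie brackets into an equality of Poisson-commutative subalgebras.
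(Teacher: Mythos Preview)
Your argument is correct and follows the paper's own reasoning: the corollary is stated in the paper as an immediate consequence of the three bullet points preceding it, which record that both $\gZ(\g,\vth)$ and $\gZ(\p,\n^-)$ arise from the Lenard--Magri scheme applied to $[\ ,\,]$ together with, respectively, $[\ ,\,]_{(0)}$ and $[\ ,\,]_{(\p,\n^-)}$, and that these two auxiliary brackets coincide once one reduces to $p_0>0$ via an automorphism of $\tilde{\eus D}(\g)$. You have simply spelled out this reduction and the bookkeeping explicitly.
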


\begin{ex}   \label{ex:p=b}
Consider $\vth\in {\sf Int}^f(\g)$ such that $\g_0=\g(0)=\te$. This is equivalent to that $p_i>0$ for all
$i=0,1,\dots ,l$. Then $\p=\be$ is a Borel subalgebra and hence $\gZ(\g,\vth)=\gZ(\be,\ut^-)$. The 
advantage of this situation is that $\ut^-=[\be^-,\be^-]$ is a spherical subalgebra, and our results for the
{\sf PC} subalgebra $\gZ(\be,\ut^-)$ are more precise and complete~\cite[Sect.\,4,\,5]{bn-oy}. 
Namely, 
\begin{itemize}
\item[\sf (i)] \  $\trdeg \gZ(\be,\ut^-)=\bb(\g)$, the maximal possible value for the {\sf PC} subalgebras of
$\gS(\g)$;
\item[\sf (ii)] \ $\gZ(\be,\ut^-)$ is a maximal {\sf PC} subalgebra of $\gS(\g)$;
\item[\sf (iii)] \  $\gZ(\be,\ut^-)$ is a polynomial algebra, whose free generators are explicitly described.
\end{itemize}
Thus, results on parabolic contractions provide a description of $\gZ(\g,\vth)$ for a class of 
$\vth\in {\sf Int}^f(\g)$. 
(And it is not clear how to establish {\sf (ii)} and {\sf (iii)} in the context of $\BZ_m$-gradings!)
\end{ex}

Conversely, results on periodic contractions allow us to enrich the theory of parabolic contractions
and give a formula for $\trdeg\,\gZ(\p ,\n^-)$ with arbitrary $\p$.

\begin{prop}       \label{cl:podalgebra}
For any parabolic subalgebra $\p\subset\g$ with Levi subalgebra $\el$, we have
\[
     \trdeg\gZ(\p ,\n^-)=\bb(\g)-\bb(\el)+ \rk\g .
\]
\end{prop}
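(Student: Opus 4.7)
The plan is to realize the parabolic contraction $\p\ltimes(\n^-)^{\sf ab}$ as a periodic contraction $\g_{(0)}$ for a suitably chosen inner periodic automorphism and then invoke Theorem~\ref{thm:2.1} together with Corollary~\ref{cor:Algebren=}. Since $\trdeg\gZ(\p,\n^-)$ depends only on the $G$-conjugacy class of $\p$, we may assume $\p\supset\be$ is the standard parabolic associated with a subset $S\subset\Pi$, with Levi $\el$ spanned by $\te$ and the root spaces for roots in the $\BZ$-span of $S$. Define Kac labels by $p_0:=1$, $p_i:=0$ for $\ap_i\in S$, and $p_i:=1$ for $\ap_i\in\Pi\setminus S$. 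Since $p_0=1$, we automatically have $\gcd(p_0,\dots,p_l)=1$, so these data define an inner periodic automorphism $\vth=\vth(\boldsymbol{p})$ of order $m=1+\sum_{\ap_i\notin S}n_i$. From the formula $d(\gamma)=\sum_i[\gamma:\ap_i]p_i$ in Section~\ref{sect:inner-auto}, a root $\gamma$ has $d(\gamma)=0$ precisely when $\gamma$ lies in the root system of $\el$, so the $\BZ$-grading attached to $(p_1,\dots,p_l)$ produces exactly the parabolic $\p$ together with its opposite nilradical $\n^-$.

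Because $p_0=1>0$ and $n_0=1$, Theorem~\ref{thm:semidir} and Proposition~\ref{prop:parab&index} apply to this $\vth$, giving $\g_{(0)}\simeq \p\ltimes(\n^-)^{\sf ab}$ as Lie algebras and $\ind\g_{(0)}=\rk\g=\ind\g$. Consequently, Corollary~\ref{cor:Algebren=} yields the equality of {\sf PC} subalgebras $\gZ(\g,\vth)=\gZ(\p,\n^-)$, while Theorem~\ref{thm:2.1} (whose hypothesis $\ind\g_{(0)}=\ind\g$ has just been verified) computes
\[
\trdeg\gZ(\g,\vth)=\bb(\g,\vth)=\tfrac{1}{2}(\dim\g-\dim\el+\rk\g+\rk\el).
\]
Using $\rk\el=\rk\g$, the right-hand side rearranges as
\[
\tfrac{1}{2}(\dim\g+\rk\g)-\tfrac{1}{2}(\dim\el+\rk\el)+\rk\g=\bb(\g)-\bb(\el)+\rk\g,
\]
which is the asserted formula.

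I do not anticipate any real obstacle: the argument is a translation between two established Lenard--Magri constructions, made possible by the fact that our chosen $\vth$ forces $\g_0=\el$ and $\g_{(0)}$ to coincide with the parabolic contraction. The only delicate point is bookkeeping, namely checking that the prescribed Kac labels do reproduce the given parabolic $\p$; this is immediate from the description of the $(\BZ,\vth)$-degree and the relation between the $\BZ$- and $\BZ_m$-gradings in Section~\ref{sect:inner-auto}, together with the observation that $p_0=1$ guarantees $d(\delta)<m$ and hence $\g(0)=\g_0=\el$.
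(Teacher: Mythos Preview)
Your argument is correct and follows essentially the same route as the paper's own proof: choose Kac labels vanishing exactly on the simple roots of $[\el,\el]$ with $p_0\ne 0$, identify $\g_{(0)}$ with the parabolic contraction via Theorem~\ref{thm:semidir}, use Proposition~\ref{prop:parab&index} for the index equality, and then read off the transcendence degree from Theorem~\ref{thm:2.1} (which is~\cite[Theorem\,3.10]{py21}). The only cosmetic difference is that the paper allows arbitrary nonzero Kac labels outside $J$ rather than fixing them to~$1$, and leaves the final arithmetic (using $\rk\el=\rk\g$) implicit.
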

\begin{proof} Without loss of generality, we may assume that $\p\supset\be$ and $\el\supset\te$. Let 
$J\subset \{1,\dots,l\}$ correspond to the simple roots of $[\el,\el]$, i.e., $\ap_j\in\Pi$ is a root of $(\el,\te)$
if and only if $j\in J$. Take any $\vth\in {\sf Int}^f(\g)$ with the Kac labels $(p_0,\dots,p_l)$ such that 
$p_j=0$ if and only if $j\in J$ (in particular, $p_0\ne 0$). Then the $\BZ$-grading corresponding to
$(p_1,\dots,p_l)$ has the property that $\p=\g({\ge}0)$, $\el=\g(0)=\g_0$, and $\n^-=\g({<}0)$. Hence
$\g_{(0)}\simeq \p\ltimes (\n^-)^{\sf ab}$. On the other hand, since $\ind\g_{(0)}=\rk\g$ 
(Proposition~\ref{prop:parab&index}), we have
$\trdeg \gZ(\g,\vth)=\bb(\g)-\bb(\g_0)+\rk\g$, see~\cite[Theorem\,3.10]{py21}.
\end{proof}

Given $\vth$ with Kac labels $p_0,p_1,\dots,p_l$, the subalgebra $\g_0=\g^\vth$ depends only on the 
set $\caL(\vth):=\{i\in [0,l] \mid  p_i \ne 0\}$, see Section~\ref{subs:g_0}. (This also follows from the description of $\vth$-grading given above.) Let us prove that the similar property holds for the whole
$\vth$-contraction $\g_{(0)}$. That is, having replaced all {\bf nonzero} Kac labels $p_i$ with $1$, one 
obtains another automorphism $\tilde\vth$ (of a smaller order), but the corresponding periodic 
contractions appear to be isomorphic. Note that it is {\bf not} assumed now that $p_0>0$.

\begin{thm}    \label{thm:change-to-1}
For any $\vth\in{\sf Int}^f(\g)$, the $\vth$-contraction $\g_{(0)}$ depends only on $\caL(\vth)\subset \{0,1,\dots,l\}$. 
\end{thm}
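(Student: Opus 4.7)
My plan is to prove the stronger claim that if $\tilde\vth$ denotes the inner automorphism whose Kac labels are $\tilde p_i = 1$ for $i \in \caL(\vth)$ and $\tilde p_i = 0$ otherwise, then the identity map is an isomorphism $\g_{(0)}^{\vth} \isom \g_{(0)}^{\tilde\vth}$ of Lie algebras on the common underlying vector space $\g$. This implies the theorem, since any two inner periodic automorphisms sharing the same $\caL$ give rise to the same $\tilde\vth$.

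Using the setup of Section~\ref{sect:inner-auto}, I attach to $\vth$ the degree function $d(\gamma)=\sum_{i=1}^{l}[\gamma:\ap_i]p_i$ on $\Delta$ and the induced $\BZ$-grading $\g=\bigoplus_j\g(j)$; likewise $\tilde d$, $\tilde{\g}(j)$, $\tilde m$ are attached to $\tilde\vth$. Two elementary comparisons are the heart of the argument. \emph{First}, the signs match: $\mathrm{sgn}\,d(\gamma)=\mathrm{sgn}\,\tilde d(\gamma)$ for every $\gamma\in\Delta$, because for $\gamma\in\Delta^+$ both degrees are sums of non-negative terms and both vanish exactly when $\gamma$ involves only simple roots $\ap_i$ with $i\notin\caL$. \emph{Second}, in the case $0\notin\caL$ (so $p_0=\tilde p_0=0$), the set of positive roots on which $d$ attains its maximum $m$ coincides with the set on which $\tilde d$ attains its maximum $\tilde m$; both are characterised intrinsically as $\{\gamma\in\Delta^+:[\delta-\gamma:\ap_i]=0\ \forall\,i\in\caL\cap[1,l]\}$, and the symmetric statement holds on the negative side.

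The last step is a direct computation of $[\g^\mu,\g^{\mu'}]_{(0)}$ via \eqref{eq:g-and-g_0}, comparing $\overline{d(\mu)}+\overline{d(\mu')}$ with $m-1$. Splitting according to the value of $d(\mu)$ and $d(\mu')$ relative to $\{0\}$ and $\{\pm m\}$, one checks that the outcome (original bracket or zero) is controlled entirely by (i) the sign pattern of $(d(\mu),d(\mu'),d(\mu+\mu'))$, together with (ii), in the single remaining subcase where $d(\mu),d(\mu')>0$ and $p_0=0$, the condition $d(\mu+\mu')=m$. The brackets $[\te,\g^\mu]_{(0)}$ are always the original, and $[\g^\mu,\g^{-\mu}]_{(0)}$ vanishes precisely when $d(\mu)\notin\{0,\pm m\}$. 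All three criteria depend only on $\caL(\vth)$ by the two observations above, so the brackets for $\vth$ and $\tilde\vth$ agree on every pair of root vectors, and the isomorphism follows. The principal subtlety is the boundary case $p_0=0$, where the degree $d$ attains the extreme value $m$ on a nontrivial set of positive roots (not just on $\delta$), forcing those root spaces into $\g_0$; the second observation is tailored to verify that this phenomenon occurs identically for $\vth$ and $\tilde\vth$.
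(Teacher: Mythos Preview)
Your proof is correct and follows essentially the same approach as the paper's. Both arguments reduce to a case analysis on root pairs $(\mu,\mu')$ according to the signs of $d(\mu),d(\mu')$, with the decisive observation being that in the ``both positive'' case the vanishing condition $d(\mu+\mu')=m$ (which can only occur when $p_0=0$) is characterised intrinsically by $[\delta-(\mu+\mu'):\ap_i]=0$ for all $i\in\caL$ and hence transfers to $\tilde\vth$. The only organisational difference is that you isolate the sign-matching and maximum-matching facts as standalone observations up front, whereas the paper derives them inside the case analysis; the content is the same.
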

\begin{proof}
Recall that $m=|\vth|=\sum_{i=0}^l p_i n_i=\sum_{i\in \caL(\vth)}p_i n_i$. Let $\tilde\vth$ denote the 
periodic automorphism such that $\caL(\vth)=\caL(\tilde\vth)$ and the nonzero Kac labels of $\tilde\vth$ 
are equal to $1$. Then $\tilde m:=|\tilde\vth|=\sum_{i\in \caL(\vth)}n_i$ and, for any $\beta\in\Delta$, its 
$(\BZ,\tilde\vth)$-degree equals $\tilde d(\beta):=\sum_{i\in\caL(\vth)}[\beta:\ap_i]$.
Write $\tilde\g_{(0)}$ for the $\tilde\vth$-contraction of $\g$ and then $[\ ,\,]^{\sim}_{(0)}$ stands for 
the corresponding Lie bracket. Our goal is to prove that $[\ ,\,]_{(0)}=[\ ,\,]^{\sim}_{(0)}$.

{\sf (1)} \ Both $\g_{(0)}$ and $\tilde\g_{(0)}$ share the same subalgebra $\g_0$. For any 
$x\in\g_{0}$ and $y\in\g$, we have $[x,y]_{(0)}=[x,y]=[x,y]^\sim_{(0)}$. In particular, this is true if $x\in\te$.

{\sf (2)} \ By linearity, our task is reduced to comparing the Lie brackets for two root 
spaces. For any $\beta,\mu\in \Delta$, one has either $[\g^\beta,\g^\mu]_{(0)}=[\g^\beta,\g^\mu]$ or 
$[\g^\beta,\g^\mu]_{(0)}=0$. Therefore, we have to check that if $[\g^\beta,\g^\mu]\ne 0$, then the 
property that $[\g^\beta,\g^\mu]_{(0)}=0$ depends only on $\caL(\vth)$. In other words, it suffices to prove 
that \ $[\g^\beta,\g^\mu]_{(0)}=0 \Longleftrightarrow [\g^\beta,\g^\mu]^\sim_{(0)}=0$. By (1),
we may also assume that $\beta,\mu\not\in \Delta(\g_0)$, i.e., $\ov{d(\beta)}\ne 0$ and $\ov{d(\mu)}\ne 0$.

\textbullet \quad Let $\beta,\mu\in\Delta^+\setminus \Delta(\g_0)$. Then $\ov{d(\beta)}=d(\beta)$ and 
$\ov{d(\mu)}=d(\mu)$. Suppose that $\beta+\mu\in\Delta$, i.e.  $[\g^\beta,\g^\mu]\ne 0$. Then
\[
   \text{$[\g^\beta,\g^\mu]_{(0)}=0$ if and only if $d(\beta)+d(\mu)\ge m$.} 
\]
On the other hand, $d(\beta)+d(\mu)=d(\beta+\mu)\le m-p_0$, cf.~\eqref{eq:vysota}. 
Assuming that $[\g^\beta,\g^\mu]_{(0)}=0$, we obtain 
 $p_0=0$ and $d(\beta+\mu)=d(\delta)=m$. The latter implies that 
$[\beta:\ap_i]+[\mu:\ap_i]=n_i$ for each $i\in \caL(\vth)$. Hence 
$\tilde d(\beta+\mu)=\tilde d(\delta)=\tilde m$ as well and thereby $[\g^\beta,\g^\mu]^\sim_{(0)}=0$.

\textbullet \quad Let $\beta,\mu\in\Delta^-\setminus \Delta(\g_0)$. Then $\ov{d(\beta)}=m-d(-\beta)$ and 
$\ov{d(\mu)}=m-d(-\mu)$. Suppose that $\beta+\mu\in\Delta$, i.e.  $[\g^\beta,\g^\mu]\ne 0$. 
In this case, $\ov{d(\beta)}+\ov{d(\mu)}=2m-d(-\mu-\nu)\ge m$, i.e., $[\g^\beta,\g^\mu]_{(0)}=0$. The same 
conclusion is obtained for $[\ ,\,]^{\sim}_{(0)}$ as well.

\textbullet \quad Suppose that $\beta\in\Delta^+\setminus \Delta(\g_0)$, 
$\mu\in\Delta^-\setminus \Delta(\g_0)$, and $\beta+\mu\in\Delta$. Then 
$\ov{d(\beta)}+\ov{d(\mu)}=d(\beta)+m-d(-\mu)=m+d(\beta+\mu)$. Therefore, $[\g^\beta,\g^\mu]_{(0)}=0$ 
if and only if $m+d(\beta+\mu)\ge m$, i.e., $\beta+\mu\in \Delta^+\cup \Delta(\g_0)$. Thus, this condition 
refers only to $\Delta(\g_0)$, which is the same for $\vth$ and $\tilde\vth$.
\end{proof}

\begin{rmk}   \label{rem:special-case}
If $p_0\ne 0$, i.e., $0\in\caL(\vth)$, then $\g_{(0)}\simeq\p\ltimes(\n^-)^{\sf ab}$ (Theorem~\ref{thm:semidir}). It is also clear that $\p$ and $\n^-$ depend only on 
$\{j\in [1,l]\mid p_j\ne 0\}= \caL(\vth)\setminus\{0\}$. That is, in this special case Theorem~\ref{thm:change-to-1} readily follows from Theorem~\ref{thm:semidir}.
\end{rmk}

\begin{ex}    \label{ex:G2}
For the  Lie algebra $\g$ of type $\GR{G}{2}$, one has ${\sf Aut}(\g)={\sf Int}(\g)$. Let us prove that
$\ind\g_{(0)}=\ind\g\,({=}2)$ for any periodic automorphism $\vth$. Here $\delta=3\ap_1+2\ap_2$, hence
$n_1=3$ and $n_2=2$. 
The affine Dynkin diagram $\GRt{G}{2}$ is 

\centerline{ 
\begin{picture}(56,22)(0,-5)
\multiput(10,8)(20,0){3}{\circle{6}}
\put(6,-4){\footnotesize $\ap_1$} \put(26,-4){\footnotesize $\ap_2$}
\put(44,-4){\footnotesize $-\delta$}
\put(33,8){\line(1,0){14}}
\put(12.5,8){\line(1,0){14}}
\multiput(16.28,6.3)(0,3.4){2}{\line(1,0){11.22}}
\put(12.1,5){$<$} \end{picture}} 
 
\noindent
and the Kac diagram of $\vth=\vth(p_0,p_1,p_2)$ is
\begin{picture}(56,19)(0,5)
\multiput(10,8)(20,0){3}{\circle{6}}
\put(6.1,13.9){\footnotesize $p_1$} \put(26.1,13.9){\footnotesize $p_2$}
\put(44.4,13.9){\footnotesize $p_0$}
\put(33,8){\line(1,0){14}}
\put(12.5,8){\line(1,0){14}}
\multiput(16.28,6.3)(0,3.4){2}{\line(1,0){11.22}}
\put(12.1,5){$<$} \end{picture}, with $|\vth|=p_0+3p_1+2p_2$. 
By Proposition~\ref{prop:parab&index} and Theorem~\ref{thm:change-to-1}, it suffices to consider the cases, where $p_0=0$ and $(p_1,p_2)\in\{(0,1), (1,0), (1,1)\}$. Hence $|\vth|$ equals $2,3,5$, respectively.

Since $\ind\g_{(0)}=\ind\g$ for $|\vth|\le 3$ (Section~\ref{sect:index-g_0}), only the last case requires 
some consideration. The description of inner periodic automorphisms given above shows that here 
$\g_0=\te\oplus\g^{\delta}\oplus\g^{-\delta}$ and $\g_1$ is the sum of root spaces for
$\ap_1,\ap_2, -3\ap_1-\ap_2$. As $\g^{\ap_1}\oplus\g^{\ap_2}$ contains a regular nilpotent element of 
$\g$, see~\cite[Theorem\,4]{ko63}, so does $\g_1$  and hence $\ind\g_{(0)}=\ind\g$, 
cf.~\cite[Prop.\,5.3]{p09}.
\end{ex}

\begin{prop}    \label{prop:An}
If\/ $\g=\gt{sl}_{l+1}$ and $\vth\in {\sf Int}^f(\g)$, then $\g_{(0)}$ is a parabolic contraction of $\g$ and\/ $\ind\g_{(0)}=\ind\g=l$.
\end{prop}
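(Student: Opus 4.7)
The plan is to observe that the statement is a direct corollary of Proposition \ref{prop:parab&index}, using the combinatorial peculiarity of the affine Dynkin diagram in type $A$. For $\g = \mathfrak{sl}_{l+1}$, the diagram $\widetilde{\textsf{A}}_l$ is a cycle on $l+1$ nodes and every mark equals $1$, i.e., $n_i = 1$ for all $i \in \{0, 1, \dots, l\}$.

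Given any $\vth \in {\sf Int}^f(\g)$, its Kac labels $\boldsymbol{p} = (p_0, p_1, \dots, p_l)$ are, by definition, not all zero, so there exists an index $i$ with $p_i > 0$. Since $n_i = 1$ for this $i$ (and indeed for every $i$), the hypothesis of Proposition \ref{prop:parab&index} is automatically fulfilled. The conclusion yields that $\g_{(0)}$ is (isomorphic to) a parabolic contraction $\p \ltimes (\n^-)^{\sf ab}$ and $\ind \g_{(0)} = \rk \g = l$.

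For concreteness, one may reduce to the setting of Theorem \ref{thm:semidir}: since $n_i = 1$, the node $\ap_i$ may be taken as the affine node via an automorphism of $\widetilde{\eus D}(\g)$ (Section~\ref{subsub:int}), replacing $\vth$ by a conjugate inner automorphism with $p_0 > 0$. Theorem \ref{thm:semidir} then identifies $\g_{(0)}$ with the parabolic contraction associated with the $\BZ$-grading defined by the remaining labels, and \cite[Theorem\,4.1]{sel} gives the index equality. There is no main obstacle here, as the argument is essentially combinatorial: it just uses that every node of $\widetilde{\textsf{A}}_l$ has mark $1$, so the hypothesis ``some nonzero $p_i$ sits at a node with $n_i = 1$'' is vacuous in type $A$.
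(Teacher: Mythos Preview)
Your proof is correct and follows essentially the same approach as the paper: both observe that $\widetilde{\textsf{A}}_l$ is a cycle with all $n_i=1$, so some nonzero Kac label necessarily sits at a node with $n_i=1$, and then invoke Proposition~\ref{prop:parab&index} (equivalently, rotate the cycle so that $p_0>0$ and apply Theorem~\ref{thm:semidir} together with \cite[Theorem\,4.1]{sel}).
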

\begin{proof}
For $\gt{sl}_{l+1}$, the affine Dynkin diagram $\GRt{A}{l}$ is a cycle and $n_i=1$ for all $i=0,1,\dots,l$. 
The Kac diagram of an inner automorphism is determined up to a rotation of this cycle. Therefore, we 
may always assume that $p_0>0$. Hence $\g_{(0)}$ is a parabolic contraction for {\bf every} 
$\vth\in {\sf Int}^f(\gt{sl}_{l+1})$  and thereby $\ind\g_{(0)}=\ind\g$ for {\bf all} inner periodic 
automorphisms.
\end{proof}

\begin{prop}    \label{prop:ind-sp-odd}
If\/ $\g=\gt{sp}_{2l}$ and $\vth\in {\sf Aut}^f(\g)$ with $|\vth|$ odd, then $\ind\g_{(0)}=\ind\g=l$.
\end{prop}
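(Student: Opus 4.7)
The plan is to reduce the claim directly to Proposition~\ref{prop:parab&index}. First I would note that the Dynkin diagram of type $\GR{C}{l}$ admits no non-trivial automorphisms, so every finite-order automorphism of $\gt{sp}_{2l}$ is inner; hence $\vth \in \mathsf{Int}^f(\g)$ and the Kac diagram $\eus K(\vth)$ is supported on the affine diagram $\GRt{C}{l}$.

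Next I would recall the marks on $\GRt{C}{l}$: the highest root of $\GR{C}{l}$ is $\delta = 2\ap_1 + 2\ap_2 + \cdots + 2\ap_{l-1} + \ap_l$, so $n_0 = n_l = 1$ and $n_i = 2$ for $1 \le i \le l-1$. Consequently
\[
   m = |\vth| = p_0 + \sum_{i=1}^{l} n_i p_i = p_0 + p_l + 2\sum_{i=1}^{l-1} p_i.
\]
The parity of $m$ is therefore equal to the parity of $p_0 + p_l$. Since $m$ is odd by hypothesis, exactly one of $p_0,p_l$ is odd and the other is even; in particular at least one of $p_0$ and $p_l$ is strictly positive.

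The key observation is then that both of the extremal nodes of $\GRt{C}{l}$ carry the mark $n_i = 1$. Hence $\eus K(\vth)$ has a nonzero label $p_i$ at some node with $n_i = 1$, so Proposition~\ref{prop:parab&index} applies and yields that $\g_{(0)}$ is isomorphic to a parabolic contraction of $\g$ and $\ind \g_{(0)} = \rk \g = l$. I do not expect any serious obstacle: the whole content is a short parity argument together with the location of the ``$n_i = 1$'' nodes in $\GRt{C}{l}$, which is precisely the hypothesis needed to invoke the already-established Proposition~\ref{prop:parab&index}.
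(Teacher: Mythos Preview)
Your proof is correct and follows essentially the same route as the paper: use the marks $(n_0,\dots,n_l)=(1,2,\dots,2,1)$ on $\GRt{C}{l}$, observe that the parity of $m$ equals the parity of $p_0+p_l$, deduce that one of the extremal labels is nonzero, and apply Proposition~\ref{prop:parab&index}. The only difference is that the paper first invokes Theorem~\ref{thm:change-to-1} to reduce to labels in $\{0,1\}$ before running the parity argument, whereas you apply the parity argument directly to the original labels; your version is marginally more direct, since the reduction step is not actually needed here.
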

\begin{proof}
Here  ${\sf Aut}(\g)={\sf Int}(\g)$, $\delta=2\ap_1+\dots+2\ap_{l-1}+\ap_l$,  the affine Dynkin diagram $\GRt{C}{l}$ is  \\
\centerline{
\begin{picture}(115,32)(75,-5)
\setlength{\unitlength}{0.018in} 
\multiput(70,8)(20,0){3}{\circle{6}}
\multiput(150,8)(20,0){2}{\circle{6}}
\put(93,8){\line(1,0){14}}
\multiput(156.34,7)(0,2){2}{\line(1,0){10.8}}    \put(152.78,5.72){$<$} 
\multiput(72.86,7)(0,2){2}{\line(1,0){10.8}}      \put(80.24,5.72){$>$}
\multiput(113,8)(28,0){2}{\line(1,0){6}}
\put(123,5){$\cdots$} 
\put(86,-4){\footnotesize $\ap_1$} \put(106,-4){\footnotesize $\ap_2$}
\put(143,-4){\footnotesize $\ap_{l{-}1}$} \put(166,-4){\footnotesize $\ap_l$}
\put(64,-4){\footnotesize $-\delta$}
\put(67.5,14.2){\footnotesize $1$}  \put(87.5,14.2){\footnotesize $2$} \put(107.5,14.2){\footnotesize $2$}
\put(147.5,14.2){\footnotesize $2$} \put(167.5,14.2){\footnotesize $1$}
\put(176,6){,}
\end{picture}
}
and the Kac diagram of $\vth=\vth(p_0,p_1,\dots,p_l)$ is \\
\centerline{
\begin{picture}(115,25)(75,3)
\setlength{\unitlength}{0.018in} 
\multiput(70,8)(20,0){3}{\circle{6}}
\multiput(150,8)(20,0){2}{\circle{6}}
\multiput(156.34,7)(0,2){2}{\line(1,0){10.8}}    \put(152.78,5.72){$<$} 
\multiput(72.86,7)(0,2){2}{\line(1,0){10.8}}      \put(80.24,5.72){$>$}
\put(93,8){\line(1,0){14}}
\multiput(113,8)(28,0){2}{\line(1,0){6}}
\put(123,5){$\cdots$}
\put(86,14.2){\footnotesize $p_1$} \put(106,14.2){\footnotesize $p_2$}
\put(143,14.2){\footnotesize $p_{l{-}1}$} \put(166,14.2){\footnotesize $p_l$}
\put(66,14.2){\footnotesize $p_0$}
\put(176,6){.}
\end{picture}
}
Here $|\vth|=p_0+2(p_1+\dots+ p_{l-1})+p_l$. By Theorem~\ref{thm:change-to-1}, we may assume that all $p_i\le 1$. Since $|\vth|$ is odd, either $p_0$ or $p_l$ is equal to $1$. Then Proposition~\ref{prop:parab&index}
applies.
\end{proof}

To provide yet another illustration of the interplay between parabolic contractions and $\vth$-contractions,
we need some preparations.

If $H\in\gS^d(\g)$, then one can decompose $H$ as the sum of bi-homogeneous components
$H=\sum_{i=0}^d  H_i$, where $H_i\in \gS^i(\n^-)\otimes \gS^{d-i}(\p)$. Then $H^\bullet_{\n^-}$ denotes 
the nonzero bi-homogeneous component of $H$ with maximal $i$ (=\, of maximal $\n^-$-degree). 

\begin{thm}[cf. Theorem\,5.1 in \cite{sel}]    
\label{thm:5.1-in-sel}
Let $\g$ be either $\gt{sl}_{l+1}$ or $\gt{sp}_{2l}$. If\/ $\q=\p\ltimes(\n^-)^{\sf ab}$ is any parabolic contraction of $\g$,
then $\gS(\q)^\q$ is a polynomial algebra. Moreover, there are free generators 
$H_1,\dots,H_l\in\gS(\g)^\g$ such that $(H_1)^\bullet_{\n^-}, \dots,(H_l)^\bullet_{\n^-}$ freely generate 
$\gS(\q)^\q$.
\end{thm}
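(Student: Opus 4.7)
The plan is to realise the parabolic contraction $\q$ as a periodic contraction $\g_{(0)}$ via Theorem~\ref{thm:semidir}, and then transport the g.g.s.\ machinery of Section~\ref{subs:theta} from the $\vp$-grading to the $\n^-$-filtration. Concretely, given the parabolic $\p\supset\be$ with Levi $\el\supset\te$ whose simple roots are indexed by $J\subset\{1,\dots,l\}$, I choose $\vth\in{\sf Int}^f(\g)$ with Kac labels $p_0=1$, $p_j=0$ for $j\in J$, and $p_i=1$ for $i\notin J\cup\{0\}$. Theorem~\ref{thm:semidir} then gives $\g_{(0)}\simeq\p\ltimes(\n^-)^{\sf ab}=\q$, and Proposition~\ref{prop:parab&index} yields $\ind\q=\rk\g=l$; in particular $\gS(\q)^\q$ coincides with the Poisson centre $\cz_0$ of $(\gS(\g),\{\ ,\,\}_{(0)})$ and has transcendence degree $l$.

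The key bridging observation is that, for this choice of $\vth$, the $\vp$-top component $H^\bullet$ of any $H\in\gS(\g)^\g$ coincides with its $\n^-$-top component $H^\bullet_{\n^-}$. That $H^\bullet$ lies in $\gS(\q)^\q$ is a general feature: the splitting $\{\ ,\,\}_\g=\{\ ,\,\}_{(0)}+\{\ ,\,\}_{(\infty)}$ has $\vp$-weights $0$ and $-m$ respectively, so $\{H,x\}_\g=0$ forces $\{H^\bullet,x\}_{(0)}=0$ for every $x\in\g$. To identify $H^\bullet$ with $H^\bullet_{\n^-}$ for the classical generators $H_k=\tr(X^{k+1})$ (type $A$) or $H_k=\tr(X^{2k})$ (type $C$), one expands the trace into cyclic monomials $X_{i_1,i_2}X_{i_2,i_3}\cdots X_{i_s,i_1}$ and checks that, with our Kac labels, the $\vp$-degree of such a monomial equals $m$ times the number of its lower-triangular factors (its $\n^-$-degree), up to a contribution from $\el$ that does not affect maximality. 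Hence monomials of maximal $\vp$-degree are exactly those of maximal $\n^-$-degree.

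Using Theorem~\ref{thm:kokosik}, I would then verify that $H_1,\dots,H_l$ forms a g.g.s.\ for $\vth$. For $\g=\sln$ this follows from Theorem~\ref{thm:svob-alg-sl}; for $\g=\spn$ the bi-degrees $\deg_\vp H_k$ are computed directly from the explicit description of $\vth$ above and matched against $D_\vth=\sum_{i=1}^{m-1}i\dim\g_i$. The g.g.s.\ property combined with Theorem~\ref{thm:kokosik} then yields algebraic independence of $H_1^\bullet,\dots,H_l^\bullet$, and by the previous paragraph also of $(H_1)^\bullet_{\n^-},\dots,(H_l)^\bullet_{\n^-}$ inside $\gS(\q)^\q$. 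Since $\trdeg\gS(\q)^\q=l$, these constitute a transcendence basis.

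Finally, to upgrade a transcendence basis to a set of free polynomial generators, I would apply the standard criterion that $\gS(\q)^\q=\bbk[F_1,\dots,F_l]$ is polynomial whenever the $F_i$ are algebraically independent $\q$-invariants with $\sum\deg F_i=(\dim\q+\ind\q)/2$ and $\codim_{\q^*}\q^*_{\sing}\ge 2$. The degree identity is automatic, since $\deg (H_k)^\bullet_{\n^-}=\deg H_k$ and $\sum_k\deg H_k=\bb(\g)=(\dim\q+\ind\q)/2=\bb(\q)$ by the classical Kostant formula for $\sln$ and $\spn$. The remaining codimension-two condition on $\q^*_{\sing}$ is the crux and, in my view, the main obstacle: it is precisely the analysis of the Richardson orbit of $\p$ in types $A$ and $C$ that is already carried out in \cite{sel}. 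Granting it, the criterion delivers $\gS(\q)^\q=\bbk[(H_1)^\bullet_{\n^-},\dots,(H_l)^\bullet_{\n^-}]$ as a polynomial ring in $l$ variables, completing the proof.
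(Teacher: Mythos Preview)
The paper does not prove this theorem at all: it is quoted verbatim from \cite{sel} (hence the ``cf.\ Theorem\,5.1 in \cite{sel}'' in the heading) and is used as an \emph{input} for the results of Section~\ref{sect:inner-auto}. In particular, the logical flow in the paper runs in the direction opposite to your proposal: Theorem~\ref{thm:5.1-in-sel} (proved in \cite{sel} via the Richardson-orbit analysis) is combined with Theorem~\ref{thm:semidir} and Lemma~\ref{lm:sovpad} to \emph{deduce} the existence of a g.g.s.\ and the polynomiality of $\gZ(\sln,\vth)$ in Theorem~\ref{thm:svob-alg-sl}.

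Your argument is therefore circular in the $\sln$ case: you invoke Theorem~\ref{thm:svob-alg-sl} to obtain the g.g.s.\ for $\vth$, but the proof of Theorem~\ref{thm:svob-alg-sl} in the paper begins with the words ``By Theorem~\ref{thm:5.1-in-sel}\dots''. For $\spn$ you avoid this particular circularity, but you do not actually carry out the promised verification that the trace powers form a g.g.s.\ for your chosen $\vth$; this is not automatic and is in fact one of the things that \cite{sel} establishes. Finally, you yourself identify the crux as the codimension-two condition on $\q^*_{\sf sing}$ and defer it to \cite{sel}; since that estimate (together with the degree count) is essentially the entire content of the proof in \cite{sel}, your proposal reduces to ``grant the hard part of \cite{sel}, then repackage it through the periodic-contraction language''. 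That repackaging is correct as far as it goes---indeed Lemma~\ref{lm:sovpad} already gives $H^\bullet=H^\bullet_{\n^-}$ for all $\te$-invariants, so your monomial computation is unnecessary---but it does not constitute an independent proof of the theorem.
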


In the situation of Theorem~\ref{thm:semidir}, we have $\g_{(0)}\simeq \p\ltimes(\n^-)^{\sf ab}$ and, for a
homogeneous $H\in\gS(\g)$, there are two {\sl a priori\/} different constructions:

\textbullet \quad First, one can take $H^\bullet$, the bi-homogeneous component of $H$ with highest 
$\vp$-degree. (Recall that this uses the $\BZ_m$-grading $\g=\bigoplus_{i=0}^{m-1}\g_i$ and 
$\vp:\bbk^*\to \GL(\g)$, see Section~\ref{subs:theta}.)

\textbullet \quad Alternatively, one can take $H^\bullet_{\n^-}$, which employs the direct sum 
$\g=\p\oplus\n^-$.
\\[.4ex]
However, the two decompositions of $\g$ are related in a very precise way, and therefore the following is not really surprising.

\begin{lm}              \label{lm:sovpad}
Suppose that $p_0(\vth)>0$, and let $\g=\bigoplus_{i=0}^{m-1}\g_i$ and $\g=\p\oplus\n^-$ be as above.
If $H\in\gS(\g)^\te$, then $H^\bullet=H^\bullet_{\n^-}$.
\end{lm}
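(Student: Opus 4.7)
Here is how I would prove Lemma~\ref{lm:sovpad}.

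\medskip

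\noindent\textbf{Plan.} The key observation is that on $\te$-invariant elements the two gradings, $\vp$-grading and $\n^-$-grading, are proportional. More precisely, I aim to show that for any $\te$-weight-zero monomial $X\in \gS(\g)$, one has $\deg_\vp(X)=m\cdot\deg_{\n^-}(X)$; the lemma then follows, since maximising one quantity is the same as maximising the other.

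\medskip

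\noindent\textbf{Step 1: Reduction to monomials.} Expand $H\in\gS(\g)^\te$ as a sum of PBW-type monomials of the form $X=t_0\cdot e_{\beta_1}\cdots e_{\beta_k}$, where $t_0\in \gS(\te)$ and $\beta_i\in\Delta$. Since $H$ is $\te$-invariant, only monomials with $\sum_i\beta_i=0$ can appear, which gives the identity
\[
   \sum_i d(\beta_i)=0.
\]
Both $\deg_\vp$ and $\deg_{\n^-}$ are additive on products, so it suffices to verify the identity $\deg_\vp(X)=m\cdot\deg_{\n^-}(X)$ for each such monomial.

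\medskip

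\noindent\textbf{Step 2: Computing $\deg_\vp$ using $p_0>0$.} The hypothesis $p_0>0$ forces $0\le d(\mu)\le m-p_0<m$ for every $\mu\in\Delta^+$, so by~\eqref{eq:d-bar} I have
\[
   \ov{d(\mu)}=d(\mu)\ \text{ for }\mu\in\Delta^+,\quad
   \ov{d(-\mu)}=\begin{cases} 0, & d(\mu)=0,\\ m-d(\mu), & d(\mu)>0.\end{cases}
\]
Split the indices into three groups: $\gamma_i\in\Delta^+$ (contributing $d(\gamma_i)$ to $\deg_\vp$), roots of $\g_0$ (contributing $0$), and $-\eta_j$ with $\eta_j\in\Delta^+\setminus\Delta(\g_0)$ (contributing $m-d(\eta_j)$). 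The factor $t_0\in\gS(\te)$ contributes $0$ to $\deg_\vp$. Therefore
\[
   \deg_\vp(X)=\sum_i d(\gamma_i)+\sum_j\bigl(m-d(\eta_j)\bigr)=bm+\Bigl(\sum_i d(\gamma_i)-\sum_j d(\eta_j)\Bigr),
\]
where $b$ is the number of $\eta_j$'s appearing.

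\medskip

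\noindent\textbf{Step 3: Using $\te$-invariance and identifying $\deg_{\n^-}$.} By Step 1, the bracketed sum vanishes, so $\deg_\vp(X)=bm$. On the other hand, the factors of $X$ lying in $\n^-$ are precisely those $e_{-\eta_j}$ with $\eta_j\in\Delta^+\setminus\Delta(\g_0)$ (the roots of $\p$ contribute to the $\p$-part, and $t_0\in\gS(\te)\subset\gS(\p)$). Hence $\deg_{\n^-}(X)=b$ and the identity
\[
   \deg_\vp(X)=m\cdot\deg_{\n^-}(X)
\]
is proved. Consequently, the bi-homogeneous decompositions with respect to $\vp$ and with respect to $\g=\p\oplus\n^-$ group the same monomials together, and the highest-$\vp$-degree component of $H$ coincides with its highest-$\n^-$-degree component, i.e.\ $H^\bullet=H^\bullet_{\n^-}$.

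\medskip

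\noindent\textbf{Expected obstacle.} There is no deep obstacle: the entire argument rests on the clean range $0\le d(\mu)<m$ for $\mu\in\Delta^+$, which is exactly what $p_0>0$ buys us via~\eqref{eq:vysota}. The only point requiring a bit of care is bookkeeping the three types of roots and making sure $\gS(\te)$-factors and $\Delta(\g_0)$-factors are correctly absorbed (they contribute $0$ to both $\deg_\vp$ and $\deg_{\n^-}$).
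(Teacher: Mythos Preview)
Your proof is correct and follows essentially the same approach as the paper: both arguments reduce to $\te$-invariant monomials in root vectors, use $p_0>0$ to get $\ov{d(\mu)}=d(\mu)$ for $\mu\in\Delta(\p)$ and $\ov{d(-\gamma)}=m-d(\gamma)$ for $\gamma\in\Delta(\n)$, and then use $\te$-invariance to conclude $\deg_\vp(X)=m\cdot\deg_{\n^-}(X)$. The only cosmetic difference is that the paper groups the factors directly into $\n^-$-factors and $\p$-factors, whereas you split into three groups ($\Delta^+$, roots of $\g_0$, and $\Delta(\n^-)$); since roots of $\g_0$ contribute zero to both $d$ and $\deg_{\n^-}$, the computations collapse to the same identity.
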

\begin{proof}
Recall that if $p_0>0$, then $\g_0$ is a Levi subalgebra of $\p$, i.e., $\p=\g_0\oplus\n$. Take a basis for 
$\g$ that consists of the root vectors $e_\gamma$, $\gamma\in\Delta$, and a basis for $\te$.
Suppose that $H\in\gS(\g)^t$ is a monomial in that basis and $H\in \gS^i(\n^-)\otimes \gS^{\tilde j}(\p)$. Then
\[
     H=(\prod_{r=1}^i e_{-\gamma_r}){\cdot}f{\cdot}(\prod_{s=1}^j e_{\mu_s}) ,
\]
where $\gamma_1,\dots,\gamma_i\in \Delta(\n)$, $\mu_1,\dots,\mu_j\in \Delta(\p)$, 
$f\in\gS^{\tilde j-j}(\te)$, and $\gamma_1+\dots+\gamma_i=\mu_1+ \dots+\mu_j$. Let us compute
$\deg_\vp(H)$. By definition, $\deg_\vp(e_\gamma)=\ov{d(\gamma)}\in\{0,1,\dots,m-1\}$ and 
$\deg_\vp(f)=0$. For $\gamma\in\Delta(\n)$, we always have $\ov{d(-\gamma)}=m-{d(\gamma)}$;
and since $p_0>0$, we also have $\ov{d(\mu)}=d(\mu)$ for $\mu\in \Delta(\p)$, see \eqref{eq:d-bar}. Therefore,
\[
   \deg_\vp(H)=\sum_{r=1}^i (m-{d(\gamma_r)})+\sum_{s=1}^j {d(\mu_s)}=mi .
\]
Hence the $\vp$-degree of a $\te$-invariant monomial depends only on its $\n^-$-degree. Thus, if
$H\in \gS(\g)^\te$ is written in the basis above, then both $H^\bullet$ and $H^\bullet_{\n^-}$ consist of
the monomials of maximal $\n^-$-degree, and thereby $H^\bullet=H^\bullet_{\n^-}$. 
\end{proof}

The following is the promised ``illustration''.
\begin{thm}     \label{thm:svob-alg-sl}
For any $\vth\in {\sf Int}^f(\sln)$, there is a\/ {\sf g.g.s{.}} in $\gS(\sln)^{\sln}$ and the\/ {\sf PC} subalgebra 
$\gZ(\sln,\vth)$ is polynomial.
\end{thm}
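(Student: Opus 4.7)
The plan is to combine the parabolic-contraction result of~\cite{sel} (Theorem~\ref{thm:5.1-in-sel}) with the g.g.s. machinery (Theorem~\ref{thm:2.3}). First I would exploit the cyclic symmetry of $\GRt{A}{l}$: since all $n_i=1$, an appropriate rotation of $\eus K(\vth)$ arranges $p_0(\vth)>0$. Then Theorem~\ref{thm:semidir} identifies $\g_{(0)}\simeq \p\ltimes(\n^-)^{\sf ab}$ with a parabolic contraction, where $\p=\g({\ge}0)$ and $\n^-=\g({<}0)$ are the subalgebras defined by the remaining labels $p_1,\dots,p_l$. Proposition~\ref{prop:An} supplies $\ind\g_{(0)}=l=\ind\sln$.

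Next, to build a g.g.s., apply Theorem~\ref{thm:5.1-in-sel} to obtain free generators $H_1,\dots,H_l$ of $\gS(\sln)^{\sln}$ such that $(H_1)^\bullet_{\n^-},\dots,(H_l)^\bullet_{\n^-}$ freely generate $\gS(\g_{(0)})^{\g_{(0)}}$; in particular they are algebraically independent. Because $\vth$ is inner, it equals $\Ad(g)$ for some $g\in G$ and hence acts trivially on $\gS(\sln)^{\sln}$; thus each $H_i$ is a $\vth$-eigenvector (with eigenvalue $1$), and the $H_i$ automatically form a set of $\vth$-generators. Since $p_0>0$, Lemma~\ref{lm:sovpad} gives $H_i^\bullet=(H_i)^\bullet_{\n^-}$, so $H_1^\bullet,\dots,H_l^\bullet$ are algebraically independent and $\{H_1,\dots,H_l\}$ is a {\sf g.g.s.} with respect to $\vth$.

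It remains to upgrade polynomiality from $\gZ_\times$ to $\gZ(\sln,\vth)$. Theorem~\ref{thm:2.3} (using $\ind\g_{(0)}=l$ together with the g.g.s.) declares $\gZ_\times$ polynomial, freely generated by the nonzero $\vp$-homogeneous components of the $H_i$. If $\g_0$ is not abelian, then $\gZ(\sln,\vth)=\gZ_\times$ and we are done; if $\g_0$ is abelian, then every Kac label is positive, so $\p=\be$, and Corollary~\ref{cor:Algebren=} identifies $\gZ(\sln,\vth)=\gZ(\be,\ut^-)$, which is polynomial by Example~\ref{ex:p=b}\,{\sf (iii)}. The only real content is the interlocking of three facts that are already in place --- the parabolic identification of $\g_{(0)}$, the existence of parabolic-adapted free generators from~\cite{sel}, and the $\vp$-degree computation of Lemma~\ref{lm:sovpad} --- so I do not expect a genuine obstacle; the work is pure assembly.
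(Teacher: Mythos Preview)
Your proof is correct and follows essentially the same route as the paper's own argument: rotate the Kac diagram so that $p_0>0$, invoke Theorem~\ref{thm:semidir} and Theorem~\ref{thm:5.1-in-sel} to obtain generators whose $\n^-$-leading terms freely generate $\gS(\g_{(0)})^{\g_{(0)}}$, transfer this to a {\sf g.g.s.} via Lemma~\ref{lm:sovpad}, and then split the final step according to whether $\g_0$ is abelian. The only place where you are slightly more elliptical than the paper is the non-abelian case: the equality $\gZ(\sln,\vth)=\gZ_\times$ requires the inclusion $\cz_0\subset\gZ_\times$, which you have in hand (since $\cz_0=\bbk[H_1^\bullet,\dots,H_l^\bullet]$ and the $H_j^\bullet$ are among the $\vp$-homogeneous generators of $\gZ_\times$ given by Theorem~\ref{thm:2.3}) but do not state explicitly.
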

\begin{proof}
We assume below that $n=l+1$.
By Theorem~\ref{thm:5.1-in-sel}, there is a set $H_1,\dots,H_l$ of free homogeneous generators of 
$\gS(\g)^\g$ such that $(H_1)^\bullet_{\n^-},\dots,(H_l)^\bullet_{\n^-}$ freely generate $\gS(\q)^\q$.
Under the hypothesis on $\vth$, we also have $\p\ltimes(\n^-)^{\sf ab}\simeq\g_{(0)}$ 
(Theorem~\ref{thm:semidir}) and 
$H_i^\bullet=(H_i)^\bullet_{\n^-}$ for each $i$ (Lemma~\ref{lm:sovpad}). This means that
\[
   \cz_0=\gS(\g_{(0)})^{\g_{(0)}}=\bbk[H_1^\bullet,\dots, H_l^\bullet]
\]
is a polynomial algebra and $H_1,\dots,H_l$ is a {\ggs} with respect to $\vth$.
By Theorem~\ref{thm:2.3}, we conclude that $\cz_0\subset\gZ_{\times}$ and that $\gZ_{\times}$ is  a polynomial algebra. 

\textbullet \ If $\g_0$ is not abelian, then $\infty\in \BP_{\sf sing}$ and hence $\gZ_\times=\gZ(\sln,\vth)$ is a polynomial algebra.  

\textbullet \ If $\g_0$ is abelian, then $\g_0=\te$, $\p=\be$, and $\g_{(0)}\simeq \be\ltimes(\ut^-)^{\sf ab}$. 
In this case, $\infty\in \BP_{\sf reg}$ and one has also to include $\cz_{\infty}$ in $\gZ(\sln,\vth)$. 
However, it was directly proved in \cite[Theorem\,4.3]{bn-oy} that here $\gZ(\be,\ut^-)=\gZ(\sln,\vth)$ is 
a polynomial algebra.
\end{proof}

\section{Modification of Kac diagrams for the outer automorphisms}
\label{sect:outer}

\noindent
Here we prove an analogue of Theorem~\ref{thm:change-to-1} to the {\bf outer} periodic automorphisms 
of simple Lie algebras. Let $\vth\in{\sf Aut}^f(\g)$ be outer, with the associated diagram automorphism 
$\sigma$, see Section~\ref{subs:kac}. Recall that $r=\rk\g^\sigma$ and 
$\Pi^{(\sigma)}=\{\nu_1,\dots,\nu_r\}$ is the set of simple roots of $\g^\sigma$. 

Let $\boldsymbol{p}=(p_0,p_1,\ldots,p_{r})$ be the Kac labels of $\vth$. Using $\boldsymbol{p}$,
we construct below the vector space sum 
$\g=\bigoplus_{j\in\BZ}\g(j)$. Unlike the case of inner automorphisms, this decomposition is {\bf not} 
going to be a Lie algebra grading on the whole of $\g$. Nevertheless, it will be compatible with the 
$\sigma$-grading \eqref{eq:sigma-grad}, and it will provide a Lie algebra $\BZ$-grading on $\g^\sigma$. 

{\bf --}  \ The $\BZ$-grading of $\g^\sigma$ is given by the conditions:
\begin{itemize}
\item \ $\te^\sigma\subset\g^\sigma(0)\subset\g(0)$;
\item \  for each $\nu_i\in\Pi^{(\sigma)}$, the root space $(\g^\sigma)^{\nu_i}$ belongs to 
$\g^\sigma(p_i)\subset\g(p_i)$. 
\end{itemize}

{\bf --}  \ 
For the lowest weight $-\delta_1$ of $\g^{(\sigma)}_1$, we set
$(\g^{(\sigma)}_1)^{-\delta_1}\subset\g(p_0)$. Hence if $\gamma=
-\delta_1+\sum_{i=1}^r c_i\nu_i$ is an arbitrary weight of $\g^{(\sigma)}_1$, then
$(\g^{(\sigma)}_1)^{\gamma}\subset\g(p_0+\sum_{i=1}^r c_ip_i)$. This defines a structure of a
$\BZ$-graded $\g^\sigma$-module on $\g^{(\sigma)}_1$ and completes the construction, if $\Ord=2$.

{\bf --}  \ If $\Ord=3$, then $[\g^{(\sigma)}_1,\g^{(\sigma)}_1]=\g^{(\sigma)}_2$ and the $\BZ$-grading on 
the latter is uniquely determined by the condition that 
$[\g^{(\sigma)}_1(i),\g^{(\sigma)}_1(j)]=\g^{(\sigma)}_2(i+j)$.

For each $\g^{(\sigma)}_i$, the vector space sum obtained is compatible with the weight 
decomposition with respect to $\te^\sigma$. That is, for a $\te^\sigma$-weight space 
$(\g^{(\sigma)}_i)^\gamma\subset\g^{(\sigma)}_i$, one can point out the integer $j$ such that 
$(\g^{(\sigma)}_i)^\gamma \subset \g(j)$. Then we write $d_i(\gamma)$ for this $j$.
The preceding exposition shows that

\ $d_0(\gamma)=\sum_{i=1}^r [\gamma:\nu_i]{\cdot}p_i$;    

\ $d_1(\gamma)=p_0+\sum_{i=1}^r [(\gamma+\delta_1):\nu_i]{\cdot} p_i$;

\ $d_2(\gamma)=2p_0+\sum_{i=1}^r [(\gamma+2\delta_1):\nu_i]{\cdot} p_i$.
\\
%
We say that $d_i(\gamma)$ is the $(\BZ,\vth)$-{\it degree}\/ of the weight $\gamma$ of $\g^{(\sigma)}_i$.  
The $\BZ_m$-grading of $\g$ associated with $\vth=\vth(\boldsymbol{p})$ is obtained from the graded 
vector space decomposition of $\g$ by ``glueing''  modulo 
$m=\Ord{\cdot}(p_0+\sum_{i=1}^{r} [\delta_1:\nu_i]{\cdot}p_i)=\Ord{\cdot}d_1(0)$. 

\begin{lm}              \label{in-eq}
For an outer $\vth\in {\sf Aut}(\g)$ with Kac labels $(p_0,p_1,\dots,p_r)$, we have 
\begin{itemize}
\item[\sf (i)] \ $0\le d_0(\beta)\le m$ for all $\beta\in\Delta^+(\g^\sigma)$;
\item[\sf (ii)] \ $jp_0 \le d_j(\gamma)\le m$ for any $\te^\sigma$-weight $\gamma$ of $\g_j^{(\sigma)}$, 
$j=1,2$. Moreover, the upper bound $m$ is attained if and only if $p_0=0$.
\end{itemize}
\end{lm}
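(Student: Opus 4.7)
The plan is to derive all four inequalities by direct substitution into the formulas for $d_0,d_1,d_2$ displayed above, using $m=\Ord\cdot(p_0+d_0(\delta_1))$ and the identification $a'_i=[\delta_1:\nu_i]$.

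For \textsf{(i)}, $d_0(\beta)\ge 0$ is immediate since $[\beta:\nu_i]\ge 0$ and $p_i\ge 0$. For the upper bound, I invoke the standard property that every $\beta\in\Delta^+(\g^\sigma)$ is dominated coefficient-wise by the highest root $\tilde\delta^\sigma$ of $\g^\sigma$, so $d_0(\beta)\le d_0(\tilde\delta^\sigma)$. The task then reduces to checking $[\tilde\delta^\sigma:\nu_i]\le \Ord\cdot a'_i$ for each $i\in\{1,\dots,r\}$. I would verify this case-by-case on the five twisted affine diagrams $A_{2r}^{(2)}$, $A_{2r-1}^{(2)}$, $D_{r+1}^{(2)}$, $E_6^{(2)}$, $D_4^{(3)}$ of Section~\ref{subsub:out}: the marks $a'_i$ are read off the diagrams and the coefficients of $\tilde\delta^\sigma$ in $\Pi^{(\sigma)}$ are classical. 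Granted this, $d_0(\tilde\delta^\sigma)\le\Ord\sum_i a'_i p_i\le m$.

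For \textsf{(ii)}, since $-\delta_1$ is the lowest weight of $\g_j^{(\sigma)}$ (with $\g_2^{(\sigma)}\simeq\g_1^{(\sigma)}$ as $\g^\sigma$-modules when $\Ord=3$, and $w_0=-1$ for all the relevant $\g^\sigma$), every $\te^\sigma$-weight of $\g_j^{(\sigma)}$ can be written as $\gamma=-\delta_1+\sum_{i=1}^r c_i\nu_i$ with $c_i\in\BZ_{\ge 0}$. Substitution into the displayed formula for $d_j$, together with $\delta_1=\sum a'_i\nu_i$, yields
\[
    d_j(\gamma)=jp_0+(j-1)d_0(\delta_1)+\sum_i c_i p_i \ge jp_0.
\]
The maximum over $\gamma$ is attained at the highest weight $\gamma=\delta_1$ (where $c_i=2a'_i$), giving $d_j(\delta_1)=jp_0+(j+1)d_0(\delta_1)$ and
\[
   m-d_j(\delta_1)=(\Ord-j)p_0+(\Ord-j-1)d_0(\delta_1)\ge 0
\]
throughout the admissible range $1\le j\le\Ord-1$. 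The overall maximum across $\gamma$ and admissible $j$ is achieved at $j=\Ord-1$, where the gap equals exactly $p_0$; for $j<\Ord-1$ (which occurs only when $\Ord=3$, $j=1$), the gap is strictly positive because $\boldsymbol{p}\neq 0$ prevents $p_0$ and $d_0(\delta_1)$ from vanishing simultaneously. Hence the upper bound $m$ is attained if and only if $p_0=0$.

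The only substantive step is the numerical check in \textsf{(i)}; I see no fully uniform derivation that avoids inspecting the five twisted diagrams, but the case-by-case verification is short and mechanical.
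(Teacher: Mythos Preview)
Your proposal is correct and follows essentially the same route as the paper: for \textsf{(i)} both reduce the upper bound to the highest root $\delta^\sigma$ and verify $[\delta^\sigma:\nu_i]\le \Ord\cdot a'_i$ case-by-case on the twisted diagrams, and for \textsf{(ii)} both compute the range of $d_j$ by evaluating at the extreme weights $\pm\delta_1$ and comparing with $m=\Ord\,(p_0+d_0(\delta_1))$. Your treatment of \textsf{(ii)} is packaged a bit more uniformly via the single formula $d_j(\gamma)=jp_0+(j-1)d_0(\delta_1)+\sum_i c_ip_i$, whereas the paper writes out the $\Ord=2$ and $\Ord=3$ ranges separately, but the content is the same.
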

\begin{proof}
{\sf (i)} Since $d_0(\nu_i)=p_i\ge 0$ for $i=1,\dots,r$, we obtain $d_0(\beta)\ge 0$ for any 
$\beta\in\Delta^+(\g^\sigma)$. It then suffices to check the inequality $d_0(\beta)\le m$ only for 
$\beta=\delta^\sigma$, the highest root in $\Delta^+(\g^\sigma)$.
We do this case-by-case. 

\textbullet \quad Suppose that $\Ord=2$. Let us compare the expressions of $\delta^\sigma$ and 
$\delta_1$ via $\Pi^{(\sigma)}$. Recall that $a'_i=[\delta_1:\nu_i]$. Set $a_i=[\delta^\sigma:\nu_i]$, 
$\boldsymbol{a}=(a_1,\dots,a_r)$, and $\boldsymbol{a'}=(a_1',\ldots,a_{r}')$. Then we have
\begin{itemize}
\item[]  for $\GR{A}{2n+1}$,  \ $\boldsymbol{a}=(2,2,\ldots,2,1)$ and $\boldsymbol{a'}=(1,2,\ldots,2,1)$;
\item[]  for $\GR{A}{2n}$, \ $\boldsymbol{a}=(1,2,\ldots,2,2)$ and $\boldsymbol{a'}=(2,2,\ldots,2,2)$.   
\item[]  for $\GR{D}{n}$, \ $\boldsymbol{a}=(1,2,\ldots,2)$ and $\boldsymbol{a'}=(1,1,\ldots,1)$; 
\item[]  for $\GR{E}{6}$, \ $\boldsymbol{a}=(2,4,3,2)$ and $\boldsymbol{a'}=(2,3,2,1)$. 
\end{itemize}
In all cases, $a_i\le \Ord{\cdot}a'_i=2a_i'$ for all $i$, whence the assertion.

\textbullet \quad If $\Ord=3$, then $\g=\gt{so}_8$ and $\g^\sigma$ is of type $\GR{G}{2}$.
Here $\delta^\sigma=3\nu_1+2\nu_2$ and $\delta_1=2\nu_1+\nu_2$ is the first fundamental weight of 
$\GR{G}{2}$. Then $d_0(\delta^\sigma)=3p_1+2p_2$ and $m=3(p_0+2p_1+p_2)$.  Hence 
$d_0(\delta^\sigma)\le m$.

{\sf (ii)} \ For the weights of $\g^{(\sigma)}_1$, the $(\BZ,\vth)$-degrees range from $d_1(-\delta_1)=p_0$, 
the degree of the lowest weight, until $d_1(\delta_1)=p_0+2\sum_{i=1}^r a_i'p_i$, the degree of the 
highest weight. Since $\Ord\ge 2$, we have then $m\ge 2(p_0+\sum_{i=1}^r a_i'p_i)$ and the result 
follows.

In case  $\Ord=3$,  the $(\BZ,\vth)$-degrees for the weights of $\g^{(\sigma)}_2$ range from 
$d_2(-\delta_1)=2p_0+a_1'p_1+ a_2' p_2$  until $d_2(\delta_1)=2p_0+3(a'_1p_1+a'_2 p_2)$.
And now $m=3(p_0+a'_1p_1+a'_2 p_2)$.

In any case, $d_{\Ord-1}(\delta_1)=m$ if and only if $p_0=0$.
\end{proof}

We set $\caL(\vth):=\{i\mid 0\le i\le r, \, p_i\ne 0\}$. If $x\in\g(j)\cap \g^{(\sigma)}_i$, then we also set 
$d(x)=j$.  For an integer $d$, let $\ov{d}$ be the unique element of $\{0,1,\dots,m-1\}$ such that 
$d-{\ov{d}}\in m\BZ$.

\begin{thm}            \label{thm:change2}
If $\vth\in {\sf Aut}(\g)$ is outer, then the Lie algebra $\g_{(0)}$ depends only on the set $\caL(\vth)$. 
\end{thm}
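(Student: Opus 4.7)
The plan mirrors the proof of Theorem~\ref{thm:change-to-1}, extending the case analysis to the $\sigma$-graded setting. I would introduce $\tilde\vth \in \mathsf{Aut}^f(\g)$ with the same diagram automorphism $\sigma$ as $\vth$ and Kac labels $\tilde p_i = 1$ for $i \in \caL(\vth)$, $\tilde p_i = 0$ otherwise. Write $\tilde m = |\tilde\vth|$, $\tilde d_i$ for the corresponding degree function, and $[\ ,\,]^{\sim}_{(0)}$ for the bracket of the $\tilde\vth$-contraction. The description of $\g^\vth$ in Section~\ref{subs:g_0} already gives $\g^\vth = \g^{\tilde\vth}$, so the remaining task is to prove $[\ ,\,]_{(0)} = [\ ,\,]^{\sim}_{(0)}$ on $\g \times \g$.

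By $\te^\sigma$-equivariance and linearity, I would reduce to comparing the two brackets on weight vectors $x \in (\g^{(\sigma)}_i)^\gamma$ and $y \in (\g^{(\sigma)}_{i'})^{\gamma'}$ with $[x,y] \neq 0$. Such $x$ lies in $\g_{\ov{d_i(\gamma)}}$ and $y$ in $\g_{\ov{d_{i'}(\gamma')}}$, so by~\eqref{eq:g-and-g_0}
\[
  [x,y]_{(0)} = [x,y] \text{ if } \ov{d_i(\gamma)} + \ov{d_{i'}(\gamma')} < m, \text{ and } [x,y]_{(0)} = 0 \text{ otherwise,}
\]
with the analogous statement for $\tilde\vth$. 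It therefore suffices to verify that the vanishing indicator depends only on $\caL(\vth)$. The case analysis splits according to the signs of $\gamma, \gamma'$ relative to $\Delta^\pm(\g^\sigma)$ (together with $\pm\delta_1$-type directions in $\g^{(\sigma)}_{\geq 1}$) and by the pair $(i,i') \in \{0,\ldots,\Ord-1\}^2$. In the ``both direct'' cases, Lemma~\ref{in-eq} bounds the sum of natural degrees by $m$, so vanishing demands equality, which in turn forces $p_0 = 0$ and a weight identity expressible purely in terms of $\caL(\vth)$. The ``both wrap-around'' cases (e.g., two negative roots of $\g^\sigma$) always produce sums of residues $\geq m$ on both sides and so vanish uniformly. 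Mixed cases reduce the vanishing condition to membership of $\gamma+\gamma'$ in a set determined by $\Delta(\g_0)$ and $\Delta^+(\g^\sigma)$, hence again by $\caL(\vth)$ alone.

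The principal obstacle is that the vector space decomposition $\g = \bigoplus_j \g(j)$ is \emph{not} a Lie algebra grading on all of $\g$: it is a grading only on $\g^\sigma$, while each $\g^{(\sigma)}_k$ with $k \geq 1$ carries merely a compatible graded $\g^\sigma$-module structure. Consequently, brackets $[\g^{(\sigma)}_k, \g^{(\sigma)}_{k'}]$ with $k + k' \geq \Ord$ land in $\g^{(\sigma)}_{k+k'-\Ord}$, and the natural degree indexing may shift by $m$. This creates extra sub-cases, especially for $\Ord = 3$ where $[\g^{(\sigma)}_1,\g^{(\sigma)}_1] = \g^{(\sigma)}_2$ and $[\g^{(\sigma)}_1,\g^{(\sigma)}_2] \subset \g^\sigma$ both appear. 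Handling these requires a systematic use of Lemma~\ref{in-eq} together with the identity $m = \Ord \cdot d_1(0)$ to confirm that each vanishing condition translates to an inequality among the $p_i$ insensitive to their precise positive values, yielding the desired coincidence of brackets and hence of Lie algebras.
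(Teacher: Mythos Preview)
Your proposal is correct and follows essentially the same route as the paper: reduce to $\te^\sigma$-weight vectors, use Lemma~\ref{in-eq} to control the $(\BZ,\vth)$-degrees, and run a case analysis indexed by the $\sigma$-graded components and the sign of the weights. One organizational device the paper employs that streamlines the analysis is to first dispose of the case $[x,y]\in\g_0$: since $x,y\notin\g_0$ this forces $[x,y]_{(0)}=0$ for both $\vth$ and $\tilde\vth$, and the condition ``$[x,y]\in\g_0$'' visibly depends only on $\caL(\vth)$; thereafter one may assume $[x,y]\notin\g_0$, which makes the degree inequalities strict where needed and eliminates the borderline $d(x)+d(y)=m$ situations you allude to in the ``both direct'' cases.
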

\begin{proof}
With necessary alterations, we follow the proof of Theorem~\ref{thm:change-to-1}.
The Lie algebra $\g_0$ depends only on $\caL(\vth)$.  If $x\in\g_0$ and $y\in\g$, then $[x,y]_{(0)}=[x,y]$.
We always assume below that $x,y\not\in\g_0$. Furthermore, $x$ and $y$ are weight vectors of $\te^\sigma$ in all cases. 

{\sf 1.} \quad 
We have either $[x,y]_{(0)}=[x,y]$ or 
$[x,y]_{(0)}=0$, see~\eqref{eq:g-and-g_0}. Therefore, one has to check that if $[x,y]\ne 0$, then the 
property that $[x,y]_{(0)}=0$ depends only on $\caL(\vth)$. 

If $[x,y]\in\g_0$, then $[x,y]_{(0)}=0$, since $x,y\not\in\g_0$.
For given $x$ and $y$, the condition $[x,y]\in\g_0$ depends only on $\caL(\vth)$.
Therefore we may safely assume that $[x,y]\not\in\g_0$, in particular, that $[x,y]\ne 0$.

From~\eqref{eq:g-and-g_0} one readily deduces the following
\begin{equation} \label{d-incr}
[x,y]_{(0)}=0 \ \text{ if and only if } \ \ov{d([x,y])}<\ov{d(x)} \ \text{ and/or } \ \ov{d([x,y])}<\ov{d(y)}. 
\end{equation}
 
{\sf 2.}  \quad
Suppose first that $x\in(\g^{\sigma})^\mu$, where $\mu\in\Delta^+(\g^\sigma)$. 
Using Lemma~\ref{in-eq} and the assumption $[x,y]\not\in\g_0$, we obtain 
\[
\ov{d([x,y])}=d([x,y])=d(x)+d(y)=\ov{d(x)}+\ov{d(y)}, 
\]
if $y\in\gt u^{\sigma}$ or $y\in\gt m$. 
Now by~\eqref{d-incr}, we have $[x,y]_{(0)}\ne 0$ in those cases.  
\\[.4ex]
 (\textbullet)  \enskip  
It remains to consider the case, where  
$y\in(\g^{\sigma})^\beta$ with 
$\beta\in\Delta^-(\g^\sigma)$. 
Here 
$[x,y]_{(0)}=0$ if and only if
\[
     d_0(\mu)+m-d_0(\beta)\ge m,
\]
which is equivalent to $d_0(\mu-\beta)\ge 0$. The last inequality holds if and only if 
$[x,y]\in\gt n^\sigma+\g_0$. For given $x$ and $y$, it depends only on $\caL(\vth)$.   

{\sf 3.} \quad Suppose next that $x\in(\g^{\sigma})^\mu$,  $x\in(\g^{\sigma})^\beta$ with $\mu,\beta\in\Delta^-(\g^\sigma)$. 
Here we have 
\[
 \ov{d(x)}+\ov{d(y)}=    m-d_0(-\mu)+m-d_0(-\beta)=2m-d_0(-\mu-\beta)\ge m, 
\]
where the inequality holds by Lemma~\ref{in-eq}{\sf (i)}. Hence $[x,y]_{(0)}$ in this case.

{\sf 4.} \quad 
Suppose  that  $x\in(\g^{\sigma})^\mu$ with $\mu\in\Delta^-(\g^\sigma)$, while 
 $y\in\gt m^\gamma$ is a weight vector of $\te_0$ and an eigenvector of $\sigma$. 
 Here we have 
 \[
\ov{d([x,y])}=d([x,y])=d(y)-d_0(-\mu)<d(y)=\ov{d(y)}
\]
and $[x,y]_{(0)}=0$ by~\eqref{d-incr}. 

{\sf 5.} \quad Now we consider the case, where both $x,y\in\gt m$ are weight vectors of $\te_0$ and  
eigenvectors of $\sigma$. Set $\be_{j}^{(\sigma)}=\be\cap \g_j^{(\sigma)}$. 
\\[.4ex]
 (\textbullet)  \enskip
Assume first that $\Ord=2$. Then $\gt m=\g_1^{(\sigma)}$ and $[\gt m,\gt m]\subset\g^\sigma$. 
By the construction, $\te_1^{(\sigma)}=\te\cap \g_1^{(\sigma)}\subset \g(m/2)$. 

If $x,y\in\be_{1}^{(\sigma)}$, then the $(\BZ,\vth)$-degree of $x$, as well as of $y$, is larger than or 
equal to $m/2$, but smaller than $m$ by Lemma~\ref{in-eq}{\sf (ii)}. Hence $[x,y]_{(0)}=0$. If 
$x,y\in\gt u^{-}\cap\g_1^{(\sigma)}$,   then $d(x)\le m/2$ and $d(y)\le m/2$. 
Here we have $[x,y]_{(0)}=[x,y]$, since $[x,y]\not\in\g_0$.

Suppose that $x\in\be_{1}^{(\sigma)}$ and $y\in\gt u^{-}\cap\g_1^{(\sigma)}$. Write $x\in \gt m^\mu$, 
$y\in\gt m^\beta$, where $\mu,\beta$ are weights of $\te^\sigma$, then 
$\mu+\beta\in\Delta(\g^\sigma)$, sinse $[x,y]\not\in\g_0$.   Note that $\gt m^{-\beta}\ne 0$, since $\gt m$ is a 
self-dual $\g^\sigma$-module. This applies to every $\te^\sigma$-weight in $\gt m$.  

Suppose that $\mu+\beta=\gamma\in\Delta^+(\g^\sigma)$. 
 Then $\mu=-\beta+\gamma$ and $d_1(\mu)=d_0(\gamma)+d_1(-\beta)$ with $d_1(-\beta)=m-d_1(\beta)$, cf.~Lemma~\ref{in-eq}. 
 Now 
\[
\ov{d(x)}+\ov{d(y)}=d(x)+d(y)=d_1(\mu)+d_1(\beta)=d_0(\gamma)+m-d_1(\beta)+d_1(\beta)=m+d_0(\gamma)\ge m
\]
and therefore $[x,y]_{(0)}=0$.

Suppose now that $\mu+\beta=-\gamma\in\Delta^-(\g^\sigma)$. 
Then, analogously, 
\[
      \ov{d(x)}+\ov{d(y)}=  d(x)+d(y)=d_1(\mu)+d_1(\beta)=d_1(-\beta)-d_0(\gamma)+d_1(\beta)=m-d_0(\gamma)\le m. 
\] 
Since $[x,y]\not\in\g_0$, 
 the inequality is strict and $[x,y]_{(0)}=[x,y]\ne 0$. \\[.4ex]
 (\textbullet)  \enskip
The case of $\Ord=3$ is similar.  Recall that $[\g_1^{(\sigma)},\g_1^{(\sigma)}]=\g_2^{(\sigma)}$, 
$[\g_1^{(\sigma)},\g_2^{(\sigma)}]=\g^{\sigma}$, and $[\g_2^{(\sigma)},\g_2^{(\sigma)}]=\g_1^{(\sigma)}$.
The $(\BZ,\vth)$-degrees of elements of $\g^{(\sigma)}_1$ range from $p_0$ to $p_0+2(2p_1+p_2)$. 
The maximal sum $d(x)+d(y)$ with $x,y\in\g_1^{(\sigma)}$ such that $[x,y]\ne 0$ is 
$m-p_0\le m$. Thereby here $[x,y]_{(0)}\ne0$, since $[x,y]\not\in\g_0$. 

The minimal sum $d(x)+d(y)$ with $x,y\in\g_2^{(\sigma)}$ such that $[x,y]\ne 0$ is 
$m+p_0\ge m$. Thereby here  $[x,y]_{(0)}=0$ for all elements. 

 Suppose that $x\in\g_{1}^{(\sigma)}$ and $y\in \g_{2}^{(\sigma)}$. 
Write $x\in(\g^{(\sigma)}_1)^\mu$, $y\in(\g^{(\sigma)}_2)^\beta$, where $\mu,\beta$ are $\te^\sigma$-
weights. Then $\mu+\beta\in\Delta(\g^\sigma)$, sinse $[x,y]\not\in\g_0$.  

Suppose that $\mu+\beta=\gamma\in\Delta^+(\g^\sigma)$.
Then 
\[
      \ov{d(x)}+\ov{d(y)}=   d(x)+d(y)=m+d_0(\gamma)\ge m
\]
and therefore $[x,y]_{(0)}=0$.

Finally suppose  that $\alpha+\beta=-\gamma\in\Delta^-(\g^\sigma)$. Then 
\[
       \ov{d(x)}+\ov{d(y)}=   d(x)+d(y)=m-d_0(\gamma)\le m. 
\] 
Since $[x,y]\not\in\g_0$, the inequality is strict and $[x,y]_{(0)}=[x,y]\ne 0$.
\end{proof}

\section{The index of periodic contractions of the orthogonal Lie algebras}
\label{sect:son}

In this section, we prove that $\ind\g_{(0)}=\ind\g$ for any $\vth\in{\sf Aut}^f(\g)$, if $\g=\soN$. To this
end, we need Vinberg's description of the periodic automorphisms for the classical Lie algebras and related Cartan subspaces in $\g_1$~\cite[\S\,7]{vi76}.

In the rest of the section, we work with $\g=\soN=\soVB$, where 
$\VV=\bbk^N$ 
and $\caB$ is a symmetric non-degenerate bilinear form on $\VV$.

If $\vth\in{\sf Aut}(\soN)$ and $|\vth|=m$, then $\vth=\vth_A$ is the conjugation
with a matrix $A\in\Op(\VV, \caB)$ such that $A^m=\pm I_N$. Set $\VV(\lb)=\{v\in\VV\mid Av=\lb v\}$. 
Then $\VV=\bigoplus_{\lb\in S}\VV(\lb)$, where either $S=\{\lb\mid \lb^m=1\}$ or $S=\{\lb\mid \lb^m=- 1\}$. Clearly, 
$\caB(\VV(\lb),\VV(\mu))=0$ unless $\lb\mu=1$.  Hence $\dim \VV(\lb)=\dim\VV(\lb^{-1})$.

Suppose that $A^m=I_N$. 
Then 
$S=\{1,\zeta,\dots,\zeta^{m-1}\}$, and we set $b_j=\dim\VV(\zeta^j)$ for $j=0,1,\dots,m-1$. Note that
$b_j=b_{m-j}$ for $j\ge 1$. 

If $\vth_A$ is outer, then $N=2l$ is even, $m$\/ is also even, and $\det(A)=-1$. The latter implies
that $\dim\VV(-1)$ is odd, hence $\VV(-1)\ne 0$. 
 We see  that  $A^{m}=I_N$.
 Since $\dim\VV(-1)$ is odd and $\dim\VV$ is even, 
$b_0=\dim\VV(1)$ is also odd and hence $b_0\ne 0$ as well as $b_{m/2}=\dim\VV(-1)$.

\begin{lm}                 \label{lm:so-ou}
Let $\vth$ be an outer periodic automorphism of $\g=\gt{so}_{2l}$ such that the Kac labels of $\vth$ are 
zeros and ones. Then $\g_1$ contains a nonzero semisimple element.
\end{lm}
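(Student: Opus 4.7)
The plan is to construct an explicit nonzero semisimple element of $\g_1$ using Vinberg's matrix description of outer periodic automorphisms of $\soN$ in \cite[\S\,7]{vi76}. Write $\vth=\vth_A$ with $A\in\Op(\VV,\caB)$ and $A^m=I_{2l}$, and decompose $\VV=\bigoplus_{j=0}^{m-1}\VV(\zeta^j)$, $b_j:=\dim\VV(\zeta^j)$. The form $\caB$ pairs $\VV(\zeta^j)$ non-degenerately with $\VV(\zeta^{-j})$, and $\g_1$ consists of skew operators $X$ with $X(\VV(\zeta^j))\subseteq\VV(\zeta^{j+1})$; semisimplicity of $X$ amounts to its being diagonalisable on $\VV$.

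The key first step is to prove that $b_j\ge 1$ for every $j$. For the twisted diagram $D_l^{(2)}$, all coefficients $a_i'$ equal $1$, hence $m=2\sum_{i=0}^{l-1}p_i=:2k$. Restricted to $\g^\sigma=\so_{2l-1}$, the defining representation $\VV$ of $\g$ decomposes as $\bbk^{2l-1}\oplus\bbk$, with $\te^\sigma$-weights $\{0,\pm e_1,\ldots,\pm e_{l-1}\}$ on the first summand and one extra weight of $\VV_1^{(\sigma)}$. Setting $s_i:=p_i+p_{i+1}+\cdots+p_{l-1}$, the $(\BZ,\vth)$-degrees of these weights (computed from the formulas of Section~\ref{sect:outer}) are $\{0,\pm s_1,\ldots,\pm s_{l-1}\}$ together with $p_0+s_1=k=m/2$. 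Since $p_i\in\{0,1\}$, the sequence $s_1\ge s_2\ge\cdots\ge s_l=0$ descends in unit steps, so $\{s_1,\ldots,s_l\}=\{0,1,\ldots,s_1\}$. Combining these non-negative degrees with their negatives modulo $m$ and the extra degree $k$ fills every residue in $\BZ/m$, giving $b_j\ge 1$ for each $j$.

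With all $b_j\ge 1$, choose $v_j\in\VV(\zeta^j)$ with $v_0$ and $v_{m/2}$ non-isotropic and $\caB(v_j,v_{m-j})=1$ for $1\le j<m/2$; this is possible by non-degeneracy of the pairings. Let $W=\bigoplus_j\bbk v_j$ and define $X\in\End(\VV)$ by $X(v_j)=c_j v_{j+1}$ (indices mod $m$) and $X|_{W^\perp}=0$. The skew-symmetry $\caB(Xu,w)=-\caB(u,Xw)$ reduces to the cyclic system $c_j=-c_{m-j-1}$, which admits many solutions with all $c_j\ne 0$; for such a choice $X\in\g_1$ by construction. On $W$, $X$ acts as a cyclic shift (up to non-zero scalars), so $X^m$ is a non-zero scalar on $W$, giving $m$ distinct eigenvalues and hence diagonalisability; on $W^\perp$, $X$ is zero. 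Thus $X$ is a non-zero semisimple element of $\g_1$.

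The main obstacle is the combinatorial first step, i.e.\ verifying that the 0/1 labels force every $b_j$ to be positive; this is resolved cleanly by the monotonicity of the partial sums $s_i$. The remainder is a standard diagonalisation argument for cyclic operators, with the only subtlety being that the scalars $c_j$ must be chosen compatibly with the skew-symmetry of $\caB$.
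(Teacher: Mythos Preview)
Your argument is correct, but it follows a genuinely different path from the paper's. Both proofs reduce to showing that every eigenspace $\VV(\zeta^j)$ is nonzero, i.e.\ $b_j\ge 1$ for all $j$. The paper establishes this structurally: it writes down $\g_0$ once as the centraliser of $A$ in $\gt{so}_{2l}$ (a direct sum $\gt{so}_{b_0}\oplus\gt{gl}_{b_1}\oplus\cdots\oplus\gt{so}_{b_{m/2}}$) and once from the Kac diagram (which, because the labels are $0$/$1$, forces exactly $m/2+1$ nontrivial summands), and matches the two descriptions. It then invokes Vinberg's formula $\rk(\g_0,\g_1)=\min_j b_j$ to conclude. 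You instead compute the $A$-eigenvalues on $\VV$ directly from the weights, use the ``unit-step descent'' of the partial sums $s_i$ to see that every residue modulo $m$ occurs, and then bypass Vinberg's rank formula altogether by building the semisimple element by hand as a cyclic shift on a transversal $W$ (this is in fact exactly Vinberg's basis element $C_i$, which the paper uses later in Theorem~\ref{thm:so-ind}). Your route is more explicit and self-contained; the paper's is shorter and leans on existing invariant theory.

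One expository point to tighten: the formulas of Section~\ref{sect:outer} give the $(\BZ,\vth)$-degrees on $\g$, not on $\VV$. Your degree $p_0+s_1$ for the $\sigma$-odd line in $\VV$ is correct, but it is obtained by requiring the grading on $\VV$ to be compatible with the $\g$-action (e.g.\ $(\g_1^{(\sigma)})^{-\delta_1}\cdot\VV^{e_1}\subset\VV_1^{(\sigma)}$ forces the degree to be $p_0+s_1$), not by quoting those formulas directly. It would be worth saying this in one sentence.
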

\begin{proof}
We have $\vth=\vth_A$ with $A\in\Op_{2l}$ and $\det(A)=-1$; as above, $A^m=I_N$. In~\cite[\S\,7.2]{vi76}, Vinberg gives a formula for $\rk(\g_0,\g_1)$ (i.e., the dimension of a Cartan subspace in $\g_1$) in terms of the $A$-eigenspaces in $\VV$. In the present setting, we have the so-called automorphism of type~I, and then 
$\rk(\g_0,\g_1)=\min \{b_0,b_1,\dots,b_{m/2}\}$. We already know that $b_0,b_{m/2}\ge 1$. 

The spectrum of $A$ in $\VV$ shows that the centraliser of $A$ in $\gt{so}_{2l}\simeq\wedge^2\VV$ is 
\[
     \g_0=\gt{so}_{b_0}\oplus\gt{gl}_{b_1}\oplus\ldots\oplus\gt{gl}_{b_{(m/2)-1}}\oplus\gt{so}_{b_{m/2}}.
\]
On the other hand, we can use the Kac diagram $\eus K(\vth)$ and the hypothesis that the labels does 
not exceed $1$. Here $\g^\sigma=\gt{so}_{2l-1}$, $r=l-1$, and
the twisted affine Dynkin diagram $\GR{D}{l}^{(2)}$ equipped with the coefficients $(a'_0,a'_1,\dots,a'_{l-1})$ over the nodes is

\centerline{
\begin{picture}(120,38)(75,-7)
\setlength{\unitlength}{0.018in} 
\multiput(70,8)(20,0){3}{\circle{6}}
\multiput(150,8)(20,0){2}{\circle{6}}
\multiput(76.34,7)(0,2){2}{\line(1,0){10.8}}    \put(72.78,5.72){$<$} 
\multiput(152.86,7)(0,2){2}{\line(1,0){10.8}}      \put(160.24,5.72){$>$}
\put(93,8){\line(1,0){14}}
\multiput(113,8)(27,0){2}{\line(1,0){7}}
\put(124,5){$\cdots$}
\put(67.5,14.2){\footnotesize $1$}  \put(87.5,14.2){\footnotesize $1$} \put(107.5,14.2){\footnotesize $1$}
\put(147.5,14.2){\footnotesize $1$} \put(167.5,14.2){\footnotesize $1$}
\put(63,-4){\footnotesize $-\delta_1$}
\put(86.5,-4){\footnotesize $\nu_1$} \put(106.5,-4){\footnotesize $\nu_2$}
\put(143.5,-4){\footnotesize $\nu_{l{-}2}$} \put(166.5,-4){\footnotesize $\nu_{l-1}$}
\put(176,6){.}
\end{picture} }

\noindent
Since $m=|\vth|=\Ord\bigl(\sum_{i=0}^{l-1}p_i(\vth)a'_i\bigr)=2\bigl(\sum_{i=0}^{l-1}p_i(\vth)\bigr)$ is even 
and $p_i(\vth)\le 1$, the Kac diagram contains $m/2$ nonzero labels. This implies that 
$\eus K(\vth)$ is of the following form:

$\eus K(\vth)$: \quad 
\raisebox{-4.5ex}{\begin{picture}(300,50)(0,-20)
\setlength{\unitlength}{0.018in} 
\multiput(0,8)(20,0){2}{\circle{6}}     \multiput(60,8)(20,0){2}{\circle{6}}
\multiput(60,8)(20,0){2}{\circle{6}}   \multiput(120,8)(40,0){2}{\circle{6}}
\multiput(200,8)(20,0){2}{\circle{6}} \multiput(260,8)(20,0){2}{\circle{6}}
\multiput(93,5)(40,0){3}{$\cdots$}  \multiput(33,5)(200,0){2}{$\cdots$}
\multiput(63,8)(140,0){2}{\line(1,0){14}}
\multiput(23,8)(28,0){2}{\line(1,0){6}} \multiput(83,8)(28,0){2}{\line(1,0){6}}
\multiput(123,8)(28,0){2}{\line(1,0){6}} \multiput(163,8)(28,0){2}{\line(1,0){6}}
\multiput(223,8)(28,0){2}{\line(1,0){6}}
\multiput(6.34,7)(0,2){2}{\line(1,0){10.8}}    \put(2.78,5.72){$<$} 
\multiput(262.86,7)(0,2){2}{\line(1,0){10.8}}      \put(270.24,5.72){$>$}
\multiput(57.5,14.2)(60,0){2}{\footnotesize {\it\bfseries 1}}
\multiput(157.5,14.2)(60,0){2}{\footnotesize {\it\bfseries 1}}
\put(-4,4){$\underbrace{\mbox{\hspace{57\unitlength}}}_{b' \ \text{nodes}}$}
\put(72,4){$\underbrace{\mbox{\hspace{41\unitlength}}}_{s_1 \ \text{nodes}}$}
\put(133,-9){$\cdots$}   \put(133,13){$\cdots$}
\put(168,4){$\underbrace{\mbox{\hspace{41\unitlength}}}_{s_k \ \text{nodes}}$}
\put(227,4){$\underbrace{\mbox{\hspace{57\unitlength}}}_{b'' \ \text{nodes}}$} 
\put(287,6){,}
\end{picture}  }

\noindent 
where the zero Kac labels are omitted and $k=(m/2)-1$. According to the description of $\g_0$ via the 
Kac diagram (Section~\ref{subs:g_0}), we obtain here 
\[
  \g_0=\gt{so}_{2b'+1}\oplus \bigl(\bigoplus_{i=1}^{(m/2)-1}\gt{gl}_{s_i+1}\bigr)\oplus \gt{so}_{2b''+1} .
\]
Hence $\{b_0, b_{m/2}\}=\{2b'+1,2b''+1\}$ and 
$\{b_1,\dots,b_{(m/2)-1}\}=\{s_1+1,\dots,s_{(m/2)-1}+1\}$. Thus, $b_j\ge 1$ for all $j$ and hence
$\rk\!(\g_0,\g_1)\ge 1$, i.e., $\g_1$ contains nonzero semisimple elements.
\end{proof}

\begin{lm}            \label{lm:so-in}
Let $\vth$ be an inner periodic automorphism of\/ $\g=\soN$ such that $p_i(\vth)\in \{0,1\}$ 
for all $i$.  Furthermore, assume that $p_i(\vth)=0$ for all $i$ such that $n_i=1$, i.e., \\
\centerline{
$\begin{array}{ll}
p_0(\vth)=p_1(\vth)=p_{l-1}(\vth)=p_l(\vth)=0, & \text{ if\/ $\g$ is of type $\GR{D}{l}$}, \\ 
p_0(\vth)=p_1(\vth)=0, & \text{ if\/ $\g$ is of type $\GR{B}{l}$}. 
\end{array}$ }
\\
Then $\g_1$ contains a nonzero semisimple element.
\end{lm}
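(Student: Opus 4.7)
The plan is to mirror the proof of Lemma~\ref{lm:so-ou}. Writing $\vth = \vth_A$ with $A \in \SO_N$ and $A^m = I_N$ (where $m = |\vth|$ is automatically even under the hypotheses, since every nonzero label equals $1$ and sits at a node with $n_i = 2$), the spectrum of $A$ on $\VV$ yields the block decomposition
\[
\g_0 \;=\; \gt{so}_{b_0} \oplus \bigoplus_{j=1}^{m/2-1} \gt{gl}_{b_j} \oplus \gt{so}_{b_{m/2}}, \qquad b_j := \dim\VV(\zeta^j).
\]
In this inner type-I situation, Vinberg's rank formula from \cite[\S\,7.2]{vi76} gives $\rk(\g_0,\g_1) = \min_j b_j$. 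It therefore suffices to verify that $b_j \geq 1$ for every $j \in \{0, 1, \dots, m-1\}$.

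To extract the $b_j$'s, I would read them off the Kac diagram as in the proof of Lemma~\ref{lm:so-ou}. Let $2 \le i_1 < i_2 < \cdots < i_{m/2}$ enumerate the positions of the nonzero labels. By the hypotheses, the nodes $\ap_0, \ap_1$ (and, in the $\GR{D}{l}$-case, $\ap_{l-1}, \ap_l$) carry label $0$, so the subdiagram of $0$-labels consists of a left-end $D_{i_1}$-fork (with $i_1 \geq 2$), interior type-$A$ chains of lengths $i_{j+1} - i_j - 1$, and a right-end piece. In the $\GR{D}{l}$-case the right-end piece is a $D_{l-i_{m/2}}$-fork with $l - i_{m/2} \geq 2$; in the $\GR{B}{l}$-case it is either a $B_{l-i_{m/2}}$-chain (when $p_l = 0$) or the empty diagram (when $p_l = 1$). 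Matching this Kac-diagram description of $[\g_0,\g_0]$ against the spectral description identifies $b_0$ and $b_{m/2}$ with the sizes of the two $\gt{so}$-blocks and the interior $b_j$'s with the $\gt{gl}$-block sizes. Hence in the $\GR{D}{l}$-case $b_0, b_{m/2} \geq 4$; in the $\GR{B}{l}$-case $\{b_0, b_{m/2}\} = \{2 i_1,\, 2(l - i_{m/2}) + 1\}$ when $p_l = 0$, and $\{b_0, b_{m/2}\} = \{2 i_1, 1\}$ when $p_l = 1$; the interior $b_j$'s equal $i_{j+1} - i_j \ge 1$.

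The main subtlety will be the $\GR{B}{l}$-case with $p_l = 1$: there the second $\gt{so}$-summand in the spectral formula for $\g_0$ is $\gt{so}_1 = 0$, hence invisible inside $[\g_0, \g_0]$, yet the corresponding eigenspace $\VV(-1)$ still has dimension $1$. The parity constraint $\det A = 1$ with $N = 2l+1$ odd forces $b_{m/2}$ to be even and $b_0$ to be odd, which together with $\sum_j b_j = N$ pins down $b_0 = 1$ in this boundary case. Since $b_0 \geq 1$ still holds, the conclusion $\min_j b_j \geq 1$ is not affected and Vinberg's formula closes the argument.
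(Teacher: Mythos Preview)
Your approach is the same as the paper's, but there is one genuine gap. You write ``$A\in\SO_N$ and $A^m=I_N$'' as if the second condition were automatic. For $N$ odd (type $\GR{B}{l}$) it is, since $-I_N\notin\SO_N$. For $N$ even (type $\GR{D}{l}$) it is not: a priori one only has $A^m=\pm I_N$, and both signs are compatible with $\det A=1$. The paper spends a paragraph ruling out $A^m=-I_{2l}$: if that held, then (since $m$ is even) neither $1$ nor $-1$ is an eigenvalue of $A$, and the centraliser of $A$ in $\gt{so}_{2l}$ would be a direct sum of at most $m/2$ blocks $\gt{gl}_{\dim\VV(\lambda)}$, with no $\gt{so}$-summand at all. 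This contradicts the Kac-diagram description, which forces a summand $\gt{so}_{2b'}$ with $b'\ge 2$. Only after this step is the automorphism of Vinberg's type~I and the formula $\rk(\g_0,\g_1)=\min_j b_j$ available. Without it your argument does not start.

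A minor point: your treatment of the $\GR{B}{l}$ boundary case $p_l=1$ reaches the right conclusion but the logic is off. The sentence ``$\sum_j b_j=N$ pins down $b_0=1$'' does not follow from what you wrote. What actually forces $b_0=1$ is the matching: the Kac side shows only one nontrivial $\gt{so}$-block, so on the spectral side one of $\gt{so}_{b_0},\gt{so}_{b_{m/2}}$ must be trivial; since $\det A=1$ gives $b_{m/2}$ even, it is $b_{m/2}$ that equals $2i_1$ and $b_0$ that equals $1$.
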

\begin{proof}
Since $\vth$ is inner, we may assume that $\vth=\vth_A$, where $A\in SO(\VV,\caB)$, i.e., 
$\det A=1$. We have \  
$(n_0,n_1,\ldots,n_{l-1},n_l)= \begin{cases}
(1,1,2,\ldots,2,1,1) & \text{ in type  $\GR{D}{l}$}, \\ 
(1,1,2,\ldots,2) & \text{ in type $\GR{B}{l}$}. 
\end{cases}$ 
\\
Therefore the assumptions on the Kac labels imply that $m$ is even and exactly $m/2$ labels are equal 
to $1$.

If $\g$ is of type $\GR{D}{l}$, then the Kac diagram of $\vth$ has $l+1$ nodes and looks as follows:

$\eus K(\theta)$: \quad 
\raisebox{-6.3ex}{\begin{picture}(300,64)(0,-30)
\setlength{\unitlength}{0.018in} 
\multiput(0,-4)(0,24){2}{\circle{6}}   \put(20,8){\circle{6}} 
\put(3,-2){\line(5,3){13.5}}  \put(3,18){\line(5,-3){13.5}} 
\multiput(60,8)(20,0){2}{\circle{6}}
\multiput(60,8)(20,0){2}{\circle{6}}   \multiput(120,8)(40,0){2}{\circle{6}}
\multiput(200,8)(20,0){2}{\circle{6}} \put(260,8){\circle{6}}
\multiput(280,-4)(0,24){2}{\circle{6}}
\put(277,-2){\line(-5,3){13.5}}  \put(277,18){\line(-5,-3){13.5}} 
\multiput(93,5)(40,0){3}{$\cdots$}  \multiput(33,5)(200,0){2}{$\cdots$}
\multiput(63,8)(140,0){2}{\line(1,0){14}}
\multiput(23,8)(28,0){2}{\line(1,0){6}} \multiput(83,8)(28,0){2}{\line(1,0){6}}
\multiput(123,8)(28,0){2}{\line(1,0){6}} \multiput(163,8)(28,0){2}{\line(1,0){6}}
\multiput(223,8)(28,0){2}{\line(1,0){6}}
\multiput(57.5,14.2)(60,0){2}{\footnotesize {\it\bfseries 1}}
\multiput(157.5,14.2)(60,0){2}{\footnotesize {\it\bfseries 1}}
\put(-4,-8){$\underbrace{\mbox{\hspace{56\unitlength}}}_{b' \ \text{nodes}}$}
\put(72,4){$\underbrace{\mbox{\hspace{41\unitlength}}}_{s_1 \ \text{nodes}}$}
\put(133,-9){$\cdots$}  \put(133,13){$\cdots$}
\put(168,4){$\underbrace{\mbox{\hspace{41\unitlength}}}_{s_k \ \text{nodes}}$}
\put(228,-8){$\underbrace{\mbox{\hspace{56\unitlength}}}_{b'' \ \text{nodes}}$}
\put(287,6){,}
\end{picture} }

\noindent
where $k=(m/2)-1$. By the assumption on Kac labels, we have $b',b''\ge 2$. Hence
$\g_0$ has the non-trivial summands $\gt{so}_{2b'}$, $\gt{so}_{2b''}$ and $(m/2)-1$  nonzero summands 
$\gt{gl}_{s_i+1}$.   If $A^m=-I_{2l}$, then neither $1$ nor $-1$ is 
an eigenvalues of $A$, since $m$ is even.  
Hence 
the centraliser of $A$ in $\gt{so}_{2l}$, i.e., $\g_0$, 
is a sum of  $m/2$ summands $\gt{gl}_{\dim\VV(\lambda)}$ with $\lambda^m=-1$. 
It has fewer summands that required by $\eus K(\vth)$. Therefore $A^m=I_{2l}$ and the eigenvalues of $A$ are $m$-th roots of unity.  Arguing as in the proof of Lemma~\ref{lm:so-ou}, we obtain that each 
$m$-th root of unity is an eigenvalue of $A$. In this case, the automorphism $\vth$ is again of type I in 
the sense of Vinberg~\cite[\S\,7.2]{vi76} and hence 
$\rk\!(\g_0,\g_1)=\min\limits_{0\le j\le m/2}\{ b_j\}\ge 1$. Thus, $\g_1$ contains nonzero semisimple elements. 

If $\g$ is of type $\GR{B}{l}$, then the argument is similar. The difference is that $\dim\VV=2l+1$ and
the Kac diagram of $\vth$ (having $l+1$ nodes) looks as follows:

$\eus K(\vth)$: \quad 
\raisebox{-6.3ex}{\begin{picture}(300,64)(0,-30)
\setlength{\unitlength}{0.018in} 
\multiput(0,-4)(0,24){2}{\circle{6}}   \put(20,8){\circle{6}} 
\put(3,-2){\line(5,3){13.5}}  \put(3,18){\line(5,-3){13.5}} 
\multiput(60,8)(20,0){2}{\circle{6}}
\multiput(60,8)(20,0){2}{\circle{6}}   \multiput(120,8)(40,0){2}{\circle{6}}
\multiput(200,8)(20,0){2}{\circle{6}} \multiput(260,8)(20,0){2}{\circle{6}}
\multiput(93,5)(40,0){3}{$\cdots$}  \multiput(33,5)(200,0){2}{$\cdots$}
\multiput(63,8)(140,0){2}{\line(1,0){14}}
\multiput(23,8)(28,0){2}{\line(1,0){6}} \multiput(83,8)(28,0){2}{\line(1,0){6}}
\multiput(123,8)(28,0){2}{\line(1,0){6}} \multiput(163,8)(28,0){2}{\line(1,0){6}}
\multiput(223,8)(28,0){2}{\line(1,0){6}}
\multiput(262.86,7)(0,2){2}{\line(1,0){10.8}}      \put(270.24,5.72){$>$}
\multiput(57.5,14.2)(60,0){2}{\footnotesize {\it\bfseries 1}}
\multiput(157.5,14.2)(60,0){2}{\footnotesize {\it\bfseries 1}}
\put(-4,-8){$\underbrace{\mbox{\hspace{57\unitlength}}}_{b' \ \text{nodes}}$}
\put(72,4){$\underbrace{\mbox{\hspace{41\unitlength}}}_{s_1 \ \text{nodes}}$}
\put(133,-9){$\cdots$}    \put(133,13){$\cdots$}
\put(168,4){$\underbrace{\mbox{\hspace{41\unitlength}}}_{s_k \ \text{nodes}}$}
\put(227,4){$\underbrace{\mbox{\hspace{57\unitlength}}}_{b'' \ \text{nodes}}$}
\put(288,6){,}
\end{picture} }

\noindent
where $k=(m/2)-1$ and $b'\ge 2$. 
Since $\dim\VV$ is odd, $1$ or $-1$ has to be an eigenvalue of $A$. Therefore $A^m=I_{2l+1}$ and 
again we have $b_j\ge 1$ for all $0\le j\le m/2$.
\end{proof}

\begin{thm}       \label{thm:so-ind}
If\/ $\g=\soN$, then $\ind\g_{(0)}=\rk\g$ \ for any periodic automorphism $\vth$.
\end{thm}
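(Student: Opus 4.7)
The argument will go by induction on $\dim\g=\dim\soN$, the inductive step passing from $\g$ to the centraliser $\g^x$ of a nonzero semisimple element $x\in\g_1$ via Corollary~\ref{cor:2}. By Lemma~\ref{lm:ss} and the indecomposability reduction of Section~\ref{subs:kac} it suffices to treat simple $\g=\soN$; the small-$N$ base cases are taken care of by Section~\ref{sect:index-g_0} (in particular, by the $m\le 3$ results and the $m=2$ case cited there). Since $\g_{(0)}$ depends only on $\caL(\vth)$ by Theorems~\ref{thm:change-to-1} and~\ref{thm:change2}, we may further assume that every Kac label of $\vth$ lies in $\{0,1\}$.

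Suppose first that $\vth$ is inner. If some $p_i(\vth)=1$ sits on a node of $\tilde{\eus D}(\g)$ with $n_i=1$ (i.e.\ $i\in\{0,1\}$ in type $B_l$ or $i\in\{0,1,l{-}1,l\}$ in type $D_l$), then $\g_{(0)}$ is a parabolic contraction by Theorem~\ref{thm:semidir}, and Proposition~\ref{prop:parab&index} yields $\ind\g_{(0)}=\rk\g$ at once. Otherwise, all Kac labels on end-nodes vanish and Lemma~\ref{lm:so-in} provides a nonzero semisimple $x\in\g_1$. In the outer case (necessarily $\g=\gt{so}_{2l}$ or $\gt{so}_{8}$), Lemma~\ref{lm:so-ou} similarly furnishes a nonzero semisimple $x\in\g_1$.

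With such an $x$ in hand, $\g^x\subsetneq \g$ is a proper $\vth$-stable reductive subalgebra with $\ind\g^x=\rk\g$, and Corollary~\ref{cor:2} reduces us to the equality $\ind(\g^x)_{(0)}=\ind\g^x$. Decomposing $\VV=V_0\oplus\bigoplus_\mu(V_\mu\oplus V_{-\mu})$ into $x$-eigenspaces, one has
\[
   [\g^x,\g^x]=\mathfrak{so}(V_0)\oplus\bigoplus_\mu\mathfrak{sl}(V_\mu),
\]
and $\vth$ preserves this decomposition up to a permutation of the $\mathfrak{sl}$-summands induced by $A$ sending $V_\mu\mapsto V_{\zeta\mu}$. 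For the $\mathfrak{so}(V_0)$-factor (of type $B$ or $D$) the required equality $\ind(\mathfrak{so}(V_0))_{(0)}=\rk\mathfrak{so}(V_0)$ follows from the inductive hypothesis, since $\dim V_0<N$. For each $\vth$-orbit among the $\mathfrak{sl}$-summands one reduces to a single $\mathfrak{sl}(V_\mu)$-factor equipped with an appropriate power of $\vth$ and applies Proposition~\ref{prop:An}. Summing these contributions, together with the trivial one from the abelian centre of $\g^x$, closes the induction.

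The principal technical obstacle lies in the last step: tracking how $\vth$ permutes the $\mathfrak{sl}$-factors, and verifying that the stabiliser of an individual $\mathfrak{sl}(V_\mu)$ within its $\vth$-orbit acts by an inner automorphism. A direct inspection shows that the only way an outer restriction can occur is when $\zeta^k\mu=-\mu$ for the cycle-length $k$, which forces $\zeta^k=-1$ and hence an induced automorphism of order $2$ on $\mathfrak{sl}(V_\mu)$; this case is absorbed by the $m=2$ result. Once this bookkeeping is in place, the chain
\[
   \rk\g\;\le\;\ind\g_{(0)}\;\le\;\ind(\g_{(0)})^{\xi_x}\;=\;\ind(\g^x)_{(0)}\;=\;\ind\g^x\;=\;\rk\g
\]
collapses to equalities, finishing the proof.
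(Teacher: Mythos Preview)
Your overall strategy matches the paper's: reduce to Kac labels in $\{0,1\}$ via Theorems~\ref{thm:change-to-1} and~\ref{thm:change2}, dispose of the end-node cases via Proposition~\ref{prop:parab&index}, then find a nonzero semisimple $x\in\g_1$ from Lemmas~\ref{lm:so-ou}--\ref{lm:so-in} and apply Corollary~\ref{cor:2}. The divergence is in the choice of $x$. The paper does \emph{not} use a generic semisimple element: it takes one of Vinberg's explicit basis elements $x=C_i\in\g_1$ from \cite[\S\,7.2]{vi76}, characterised by the properties that $x{\cdot}\VV(\lambda)$ is a one-dimensional subspace of $\VV(\zeta\lambda)$ for each $A$-eigenspace $\VV(\lambda)$ and that $x^m\ne 0$. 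These force $\g^{x}=\mathfrak{so}_{N-m}\oplus\te_{m/2}$, so $[\g^x,\g^x]$ is again a \emph{single} orthogonal algebra and the induction (on $N+m$) closes immediately. No $\mathfrak{sl}$-summands ever appear.

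Your generic $x$ produces $\mathfrak{sl}(V_\mu)$-summands that $\vth$ permutes cyclically (since $A\colon V_\mu\to V_{\zeta^{\pm1}\mu}$), and this is where a genuine gap opens. The step ``one reduces to a single $\mathfrak{sl}(V_\mu)$-factor equipped with an appropriate power of $\vth$ and applies Proposition~\ref{prop:An}'' presupposes that $\ind\bigl(\bigoplus_{j=0}^{k-1}\mathfrak{sl}(V_{\zeta^j\mu})\bigr)_{(0)}=k\cdot\rk\mathfrak{sl}(V_\mu)$ when $\vth$ acts as a $k$-cycle on the summands composed with an inner (or, in your boundary case, order-$2$ outer) twist. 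That is itself an instance of Conjecture~\ref{conj1} for a non-simple indecomposable pair $(\h^{\oplus k},\vth)$; nothing in the paper establishes such a reduction, and Proposition~\ref{prop:An} is stated only for \emph{simple} $\sln$ with an \emph{inner} automorphism. (For a pure cyclic permutation the contraction is the Takiff algebra $\h\otimes\bbk[t]/(t^k)$ and the index equality is known, but the twisted case needs a separate argument you have not supplied.) The paper's targeted choice of $x$ is precisely what sidesteps this difficulty.
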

\begin{proof}
We argue by induction on $N+m$ with $m=|\vth|$. If $m\le 3$, then the statement holds by 
Proposition~\ref{prop:2} and \cite{p07}.  Clearly, it holds also for $N\le 3$, cf.~Proposition~\ref{prop:An}. 

If there is a Kac label of $\vth$ that is larger than $1$, then we may replace it with `1' without changing 
the Lie algebra structure of $\g_{(0)}$, see Theorems~\ref{thm:change-to-1} and \ref{thm:change2}.   
Clearly, $m$ decreases under this procedure. Therefore we may assume that the Kac labels of $\vth$ 
belong to $\{0,1\}$. 

If $\vth$ is inner and at least one of the labels $p_0, p_1, p_{l-1}, p_l$ in type $\GR{D}{l}$ equals `1' or 
one of the labels $p_0, p_1$ in type $\GR{B}{l}$ equals `1', then $\ind\g_{(0)}=\rk\g$ by  
Proposition~\ref{prop:parab&index}. 

Therefore, we may assume that either $\vth$ is outer or $\vth$ is inner with 
$p_0=p_1=p_{l-1}=p_l=0$ (in  type $\GR{D}{l}$) and $p_0=p_1=0$ (in  type $\GR{B}{l}$). 
This implies that $m$ is even and $\g_1$ contains a nonzero semisimple element $x$, see 
Lemmas~\ref{lm:so-ou} and~\ref{lm:so-in}. By Corollary~\ref{cor:2}, it suffices to prove that 
$\ind(\g^x)_{(0)}=\ind\g^x$ for some $x\in\g_1$. Let $x=C_i\in\g_1$ be
one of the basis semisimple elements defined in~\cite[\S\,7.2]{vi76}. 
As an endomorphism of $\VV$, it has the following properties: 
\begin{itemize}
\item[($\diamond$)] \ $x{\cdot} \VV(\lambda)$ is a $1$-dimensional subspace of 
$\VV(\zeta\lambda)$ for each $\lambda\in S$; 
\item[($\diamond$)]  \ $x^m\ne 0$.
\end{itemize}
These properties imply  
that 
$\g^x=\gt{so}_{N-m} \oplus\te_{m/2}$, where $\te_{m/2}$ is an abelian Lie algebra of dimension $m/2$. 
Since $[\g^x,\g^x]$ is a smaller orthogonal Lie algebra, the induction hypothesis applies, which completes the proof. 
\end{proof}

\begin{rmk}
For $\g=\gt{sp}_{2l}$, we have ${\sf Aut}(\g)={\sf Int}(\g)$, but
an analogue of Lemma~\ref{lm:so-in} is not true. Here 
$(n_0,n_1,\ldots,n_{l-1},n_l)=(1,2,\ldots,2,1)$ and it may happen that $p_0(\vth)=p_l(\vth)=0$, but
$\g_1$ contains no nonzero semisimple elements, i.e., $\g_1\subset\N$.
In this case, $m$ is necessarily even. The simplest example of such $\vth$ occurs if $p_i=p_{i+1}=1$ for certain $i$ with $1\le i\le l-2$ and all other $p_j$ are zero, see the Kac diagram below:

\centerline{ 
$\eus K(\vth)$: \quad 
\raisebox{-4ex}{\begin{picture}(160,46)(0,-20)
\setlength{\unitlength}{0.018in} 
\multiput(0,8)(20,0){2}{\circle{6}} \multiput(60,8)(20,0){2}{\circle{6}}
\multiput(120,8)(20,0){2}{\circle{6}}
\multiput(126.34,7)(0,2){2}{\line(1,0){10.8}}    \put(122.78,5.72){$<$} 
\multiput(2.86,7)(0,2){2}{\line(1,0){10.8}}      \put(10.24,5.72){$>$}
\put(63,8){\line(1,0){14}}
\multiput(23,8)(28,0){2}{\line(1,0){6}}   
\multiput(83,8)(28,0){2}{\line(1,0){6}}
\put(33,5){$\cdots$}  \put(93,5){$\cdots$}
\put(57.5,14.2){\footnotesize {\it\bfseries 1}} \put(77.5,14.2){\footnotesize {\it\bfseries 1}} 
\put(-4,1){$\underbrace{\mbox{\hspace{56\unitlength}}}_{i \ \text{nodes}}$}
\put(88,1){$\underbrace{\mbox{\hspace{56\unitlength}}}_{l-i-1 \ \text{nodes}}$}
\put(150,6){,}
\end{picture}} 
}

\noindent 
Then $m=4$, $\g_1\subset\N$, and $\ind\g_{(0)}$ is not known. Here $\g_0=\gt{sp}_{2i}\oplus
\gt{sp}_{2j}\oplus\te_1$, where $j=l-i-1$.
\end{rmk}

\section{$\gN$-regular automorphisms and good generating systems}
\label{sect:ggs-&-N-reg}

\noindent
In this section, we prove that if $\vth$ is an $\gN$-regular automorphism of $\g$, then
$\vth$ admits a good generating system and obtain some related results on the structure of the 
{\sf PC} subalgebras $\gZ_\times, \gZ(\g,\vth) \subset \gS(\g)^{\g_0}$. Moreover, if $\tilde\vth$ is ``close'' 
to an $\gN$-regular automorphism (see Def.~\ref{def:friendly}), then $\tilde\vth$ also admits a g.g.s.  

As before, we assume that $\vth\in{\sf Aut}^f(\g)$, $|\vth|=m$, and $\zeta=\sqrt[m]1$ is a primitive root
of unity. Let $H_1,\dots, H_l$ be a set of $\vth$-{generators} in $\gS(\g)^\g$ and $\deg H_j=d_j$. We
have $\vth(H_j)=\esi_j H_j$ and $\esi_j=\zeta^{r_j}$ for a unique $r_j\in\{0,1,\dots,m-1\}$.

Following~\cite[Sect.\,3]{p05}, we associate to $\vth$ the set of integers $\{k_i\}_{i=0}^{m-1}$ defined as follows:
\[
   k_i=\#\{j\in [1,l]\mid \zeta^{m_j}\esi_j=\zeta^i\}=\#\{j\in [1,l]\mid m_j+r_j\equiv i\!\!\pmod m\}.
\]
Then $\sum_i k_i=l$. The eigenvalues $\{\esi_j\}$ depend only on the image of $\vth$ in
${\sf Aut}(\g)/{\sf Int}(\g)$ (denoted $\bar\vth$), i.e., on the connected component of ${\sf Aut}(\g)$ that 
contains $\vth$. Therefore, the vector $\vec{k}=\vec{k}(m,\bar\vth)=(k_0,\dots,k_{m-1})$ depends only on 
$m$ and $\bar\vth$.
We say that the tuple $(|\vth|, \vec{k})$ is the {\it datum} of a periodic automorphism $\vth$.

If $F\in\bbk[\g]^G$, then $F\vert_{\g_1}\in\bbk[\g_1]^{G_0}$. However, the restriction homomorphism
\\
\centerline{$\psi_1: \bbk[\g]^G\to \bbk[\g_1]^{G_0}$, \ $F\mapsto F\vert_{\g_1}$} 
\\
is not always onto. As a modest contribution to the invariant theory of $\vth$-groups, we record the 
following observation.

\begin{prop}       \label{prop:vklad}
Let $\vth$ be an arbitrary periodic automorphism of $\g$. Then
\begin{itemize}
\item[\sf (i)] \ $\bbk[\g_1]^{G_0}$ is integral over $\psi_1(\bbk[\g]^G)$;
\item[\sf (ii)] \   if the datum of $\vth$ is $(m,k_0,\dots,k_{m-1})$, then
$\trdeg\bbk[\g_1]^{G_0}=\dim\g_1\md G_0\le k_{m-1}$.
\end{itemize}
\end{prop}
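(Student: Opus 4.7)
The plan splits as follows: for (i), compare the null-cones of $A := \psi_1(\bbk[\g]^G)$ and $B := \bbk[\g_1]^{G_0}$ inside $\g_1$; for (ii), count which $\vth$-generators survive the restriction.

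For (i), both $A$ and $B$ are non-negatively graded $\bbk$-algebras with $A_0 = B_0 = \bbk$, and $B$ is finitely generated because $G_0$ is reductive. The null-cone of $A_+$ on $\g_1$ is
$\{x \in \g_1 : F(x) = 0 \text{ for all } F \in \bbk[\g]^G_+\} = \g_1 \cap \cN$,
where $\cN$ is the nilpotent cone of $\g$. By Vinberg~\cite[\S 2]{vi76}, the null-cone of the $G_0$-action on $\g_1$ is likewise $\g_1 \cap \cN$, so the ideals $A_+ B$ and $B_+$ of $B$ cut out the same closed subset of $\spe B$. Consequently $B/A_+B$ is a finite-dimensional graded $\bbk$-algebra, and graded Nakayama yields that $B$ is a finite $A$-module, a fortiori integral over $A$.

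For (ii), integrality gives $\trdeg B = \trdeg A$, and since $A$ is generated by $\psi_1(H_1),\dots,\psi_1(H_l)$, we obtain the bound $\trdeg A \le \#\{j : \psi_1(H_j) \ne 0\}$. Identifying $\gS(\g) \simeq \bbk[\g]$ via the Killing form (which pairs $\g_i$ with $\g_{m-i}$), the map $\psi_1$ becomes the projection from $\gS(\g) = \bigotimes_i \gS(\g_i)$ onto the factor $\gS(\g_{m-1})$. Since $\vp(t)$ multiplies $\g_{m-1}$ by $t^{m-1}$, the summand $\gS^{d_j}(\g_{m-1})$ has $\vp$-weight $(m-1)d_j$, so $\psi_1(H_j) \ne 0$ forces $H_j$ to contain a nonzero $\vp$-homogeneous component of weight $(m-1)d_j$. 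But $\vth = \vp(\zeta)$ and $\vth H_j = \zeta^{r_j} H_j$ constrain every such component to have weight $\equiv r_j \pmod m$, yielding the congruence $(m-1)d_j \equiv r_j \pmod m$, or equivalently $m_j + r_j \equiv m - 1 \pmod m$ with $m_j = d_j - 1$. The number of indices $j$ satisfying this is precisely $k_{m-1}$.

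The main bookkeeping step is the Killing-form translation of \emph{restriction to $\g_1$} into \emph{projection onto $\gS(\g_{m-1})$}, producing the weight $(m-1)d_j$ that matches the exponent-convention indexing in $k_{m-1}$; once this identification is in place, (i) reduces to a standard null-cone/graded-finiteness argument and (ii) to the short bidegree computation above.
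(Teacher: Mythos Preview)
Your proof is correct and follows essentially the same line as the paper's. For {\sf (i)} both arguments identify the null-cone of $G_0$ on $\g_1$ with $\g_1\cap\N$ via Vinberg and then invoke the Hilbert/Kempf finiteness principle; you spell out the graded-Nakayama step explicitly, whereas the paper simply cites Hilbert's result. For {\sf (ii)} you obtain the same congruence $m_j+r_j\equiv m-1\pmod m$, but route through the Killing-form identification $\gS(\g)\simeq\bbk[\g]$ and $\vp$-weights; the paper instead evaluates directly: for $x\in\g_1$ one has $\zeta^{d_j}H_j(x)=H_j(\zeta x)=H_j(\vth(x))=(\vth^{-1}H_j)(x)=\esi_j^{-1}H_j(x)$, which gives the congruence in one line without any identification.
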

\begin{proof}
{\sf (i)} \ By~\cite[\S\,2.3]{vi76}, $\N\cap\g_1=:\N_1$ is the null-cone for the $G_0$-action on $\g_1$. 
Therefore, the polynomials $H_1\vert_{\g_1}, \dots,H_l\vert_{\g_1}$ have the same zero locus as the
ideal in $\bbk[\g_1]$ generated by the augmentation ideal $\bbk[\g_1]^{G_0}_+$ in $\bbk[\g_1]^{G_0}$. 
By a result of Hilbert~(1893), this implies that $\bbk[\g_1]^{G_0}$ is integral over 
$\bbk[H_1\vert_{\g_1}, \dots,H_l\vert_{\g_1}]=\psi_1(\bbk[\g]^G)$. 
\\ \indent
(For a short modern proof of Hilbert's result, we refer to~\cite[Theorem\,2]{kempf}.)

{\sf (ii)} \ If $\deg H_j=d_j$ and $H_j(x)\ne 0$ for some $x\in \g_1$, then
\[
 \zeta^{d_j} H_j(x)=H_j(\zeta x)=H_j(\vth(x))=(\vth^{-1}H_j)(x)=\esi_j^{-1}H_j(x) .
\]
Hence $m_j+r_j\equiv m-1 \pmod m$. Therefore, there are at most $k_{m-1}$ \ $\vth$-generators $\{H_j\}$ that do not vanish on $\g_1$, and the assertion follows from {\sf (i)}.
\end{proof}

\begin{df}    \label{def:N-reg}
A periodic automorphism $\vth$ is said to be $\gN$-{\it regular}, 
if $\g_1$ contains a regular nilpotent 
element of $\g$. 
\end{df}
Basic results on the $\gN$-regular automorphisms are obtained in \cite[Section\,3]{p05}:

\begin{thm}            \label{thm:P05}
If\/ $\vth$ is\/ $\gN$-regular and $|\vth|=m$, then 
\begin{itemize}
\item[\sf (i)] \ $\psi_1(\bbk[\g]^{G})=\bbk[\g_1]^{G_0}$ and $\dim\g_1\md G_0=k_{m-1}$;
\item[\sf (ii)] \ the dimension of a generic stabiliser for the $G_0$-action on $\g_1$ equals $k_0$. 
\end{itemize}
In particular, $\dim\g_0-k_0=\dim\g_1-k_{m-1}=\max\dim_{x\in\g_1}G_0{\cdot}x$. 
\end{thm}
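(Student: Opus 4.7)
The plan is to combine classical Kostant theory with Vinberg's graded Jacobson--Morozov theorem. Fix a regular nilpotent $e\in\g_1$ (which exists by hypothesis) and extend it, by the graded Jacobson--Morozov theorem~\cite{vi76}, to a $\gt{sl}_2$-triple $\{e,h,f\}$ with $h\in\g_0$ and $f\in\g_{m-1}$. Since $\vth$ commutes with this $\gt{sl}_2$-action, the centralisers $\g^e,\g^f\subset\g$ are $\BZ_m$-graded, and the Slodowy slice $\cS:=e+\g^f$ meets $\g_1$ in the affine subspace $e+\g^f_1$, where $\g^f_i:=\g^f\cap\g_i$.

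Next, I would compute the $\vth$-weights of the gradients $\nabla H_j(e)\in\g^e$. Differentiating the identity $H_j(\vth x)=\esi_j^{-1}H_j(x)$ and using $\vth$-invariance of the Killing form yields
\[
\vth\bigl(\nabla H_j(e)\bigr)=\zeta^{m_j+r_j}\nabla H_j(e),
\]
where $m_j=d_j-1$ is the $j$-th exponent of $\g$. Since $e$ is regular, Kostant's theorem tells us that $\{\nabla H_j(e)\}_{j=1}^l$ is a basis of $\g^e$. Therefore $\dim(\g^e\cap\g_i)=k_i$ for every $i$; in particular $\dim\g^e_0=k_0$, and the orbit $G_0\cdot e$ has dimension $\dim\g_0-k_0$.

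The central step is a $\vth$-equivariant Kostant slice theorem: restriction induces an isomorphism $\bbk[\g_1]^{G_0}\isom\bbk[e+\g^f_1]$. To prove it, I would check transversality of the slice at $e$: the tangent space $T_e(G_0\cdot e)=[\g_0,e]$ has $\g^f_1$ as its complement inside $\g_1$ by $\gt{sl}_2$-theory (the cokernel of $\ad e:\g_0\to\g_1$ is precisely the lowest-weight part, namely $\g^f_1$), so the multiplication map $G_0\times(e+\g^f_1)\to\g_1$ is smooth at $(1,e)$; a dimension count together with closedness of the generic $G_0$-orbits meeting the slice then gives dominance, and restriction of invariants yields the desired isomorphism. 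Granting this, part {\sf (i)} follows: an arbitrary $F\in\bbk[\g_1]^{G_0}$ restricts to $\bbk[\g^f_1]$, extends by the classical Kostant theorem to some $H\in\bbk[\g]^G\simeq\bbk[\cS]$, and $\psi_1(H)=F$ because the two have equal images in $\bbk[\g^f_1]$. The equality $\dim\g_1\md G_0=k_{m-1}$ is then immediate, since exactly $k_{m-1}$ generators $H_j$ fail to vanish on $\g_1$ (by the calculation already appearing in the proof of Proposition~\ref{prop:vklad}{\sf (ii)}) and their restrictions are algebraically independent in $\bbk[\g^f_1]$ via the slice isomorphism.

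For {\sf (ii)}, I combine the slice isomorphism, which yields $\dim\g^f_1=k_{m-1}$, with the $\gt{sl}_2$-fact $\mathrm{coker}(\ad e:\g_0\to\g_1)=\g^f_1$. This gives
\[
\dim G_0\cdot e=\dim[e,\g_0]=\dim\g_1-k_{m-1}=\dim\g_0-k_0,
\]
the last equality being from the second paragraph. Since $\dim\g_1-k_{m-1}$ is the maximal orbit dimension in $\g_1$ by part {\sf (i)}, the orbit $G_0\cdot e$ has generic dimension and the generic stabiliser has dimension $k_0$. The main obstacle I foresee is the graded Kostant slice theorem itself: both transversality and the dominance of $G_0\times(e+\g^f_1)\to\g_1$ rely essentially on $\gN$-regularity of $\vth$, and some care is required because the two gradings on $\g$ (by $\ad h$ and by $\vth$) merely commute but do not coincide.
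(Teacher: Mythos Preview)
The paper does not prove this theorem; it is quoted from~\cite[Section\,3]{p05} as background. So there is no ``paper's own proof'' to compare against, and your task is really to reconstruct the argument of~\cite{p05}. Your overall strategy---graded $\gt{sl}_2$-triple, Kostant's basis $\{\nabla H_j(e)\}$ of $\g^e$, the $\vth$-eigenvalue computation $\vth(\nabla H_j(e))=\zeta^{m_j+r_j}\nabla H_j(e)$, and a graded Kostant slice---is the correct one and is essentially what~\cite{p05} does.

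That said, your sketch has two genuine soft spots. First, for the slice isomorphism $\bbk[\g_1]^{G_0}\simeq\bbk[e+\g^f_1]$ you prove injectivity (dominance of $G_0\times(e+\g^f_1)\to\g_1$) but not surjectivity; the latter is needed and follows because $\bbk[\g]^G\simeq\bbk[e+\g^f]\twoheadrightarrow\bbk[e+\g^f_1]$ is surjective (restriction to a linear subspace) and factors through $\bbk[\g_1]^{G_0}$. Second, and more seriously, in part~{\sf (ii)} you assert that ``$\dim\g_1-k_{m-1}$ is the maximal orbit dimension by part~{\sf (i)}'', but part~{\sf (i)} only gives $\dim\g_1\md G_0=k_{m-1}$, and $\dim\g_1-\dim\g_1\md G_0$ equals the maximal orbit dimension only when $\bbk(\g_1)^{G_0}=\mathrm{Frac}\,\bbk[\g_1]^{G_0}$, which is not automatic for $\vth$-groups (the action need not be stable). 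The clean fix is to use the full slice isomorphism: once $\bbk[\g_1]^{G_0}\to\bbk[e+\g^f_1]$ is bijective, the slice meets each fibre of $\g_1\to\g_1\md G_0$ in a single point, hence meets each $G_0$-orbit in a finite set; then the generic fibre of $G_0\times(e+\g^f_1)\to\g_1$, which has dimension $k_0$, equals $\dim G_0^y$ for generic $y$, giving exactly $k_0$. Your identification of the graded slice theorem as ``the main obstacle'' is apt---it carries both parts, and both directions of the isomorphism are needed.
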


Hence the $\gN$-regular automorphism are distinguished by the properties that 
the restriction homomorphism $\psi_1$ is onto and $\dim\g_1\md G_0$ has the maximal possible value 
among the automorphisms of $\g$ with a given datum.

\begin{rmk}      \label{rem:N-reg-mnogo}
If a connected component of ${\sf Aut}(\g)$ contains elements of order $m$, then it
contains $\gN$-regular automorphisms of order $m$, see~\cite[Theorem\,3.2]{p05}. Moreover, all these 
$\gN$-regular automorphisms of order $m$ are $G$-conjugate~\cite[Theorem\,2.3]{p05}. In particular,
for each $m\in\BN$, there is a unique, up to conjugacy, inner $\gN$-regular automorphism of order $m$.
\end{rmk}

\begin{prop}[{\cite[Thm.\,3.3(iv) \& Corollary\,3.4]{p05}}]   \label{prop:dim-g_i}
If\/ $\vth$ is\/ $\gN$-regular and $|\vth|=m$, then 
\begin{gather}      \label{eq:D_th-N-reg}
    \dim\g_0=\frac{1}{m} \bigl(\dim\g+\sum_{i=0}^{m-1} (m-1-2i)k_i\bigr) \ \text{ and} \\
     \dim\g_{i+1}-\dim\g_i=k_{m-1-i}-k_i            \label{eq:raznitsa}
\end{gather}
for every $i\in\{0,1,\dots,m{-}1\}$. 
\end{prop}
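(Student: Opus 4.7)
The plan is to establish the second formula \eqref{eq:raznitsa} first and then derive \eqref{eq:D_th-N-reg} from it by a telescoping sum.

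Using $\gN$-regularity, I would fix a regular nilpotent $e\in\g_1$ and embed it in a $\vth$-stable principal $\gt{sl}_2$-triple $(e,h,f)$ with $h\in\g_0$ and $f\in\g_{m-1}$. This requires a $\vth$-equivariant Jacobson--Morozov argument, which is available because $e$ is principal and the Killing form pairs $\g_1$ non-degenerately with $\g_{m-1}$. By Kostant's theorem, $\g$ decomposes as $\bigoplus_{j=1}^l V_j$ into simple $\gt{sl}_2$-modules $V_j\simeq V(2m_j)$, where the $m_j$ are the exponents of $\g$. Since $\vth$ preserves both $\g^e$ and the $h$-grading, highest weight vectors $e_j\in\g^e$ of $h$-weight $2m_j$ can be chosen as $\vth$-eigenvectors.

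The decisive step is to show that the $\vth$-eigenvalue of $e_j$ equals $\zeta^{m_j+r_j}$, where $\vth(H_j)=\zeta^{r_j}H_j$. I would argue via Kostant's cross-section $\cS=e+\g^f$ and the polynomial isomorphism $\chi\colon\cS\to\bbk^l$, $s\mapsto(H_1(s),\dots,H_l(s))$. Setting $\tilde\vth:=\Ad(\rho^\vee(\zeta^{-1}))\circ\vth$ and using that $\Ad(\rho^\vee(t))$ acts on the $h$-eigenspace of weight $2k$ by $t^k$, one verifies that $\tilde\vth$ fixes $e$, stabilises $\cS$, and still acts on $\gS(\g)^\g$ by $\zeta^{r_j}$ on $H_j$. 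The $\tilde\vth$-equivariance of $\chi$ then identifies the $\tilde\vth$-eigenvalues on $T_e\cS=\g^f$ as $\{\zeta^{-r_j}\}$; unwinding $\tilde\vth$ together with the $\vth$-invariant Killing pairing $\g^e\times\g^f\to\bbk$ yields the asserted eigenvalue on $e_j$. Since $\vth(f)=\zeta^{-1}f$, the basis $\{f^k e_j\}_{k=0}^{2m_j}$ of $V_j$ consists of $\vth$-eigenvectors with eigenvalues $\zeta^{m_j+r_j-k}$, and hence
\[
\dim(\g_a\cap V_j)=\#\{k\in[0,2m_j]\mid m_j+r_j-k\equiv a\pmod m\}.
\]

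Summing over $j$ and telescoping, the contribution of each $V_j$ to $\dim\g_{a+1}-\dim\g_a$ is $+1$ when $a\equiv r_j-m_j-1\pmod m$, is $-1$ when $a\equiv m_j+r_j\pmod m$, and vanishes otherwise, so that
\[
\dim\g_{a+1}-\dim\g_a=\#\{j\mid r_j-m_j-1\equiv a\pmod m\}-k_a.
\]
To match this with \eqref{eq:raznitsa} it remains to prove the multiset identity $\{m_j+r_j\}=\{m_j-r_j\}$ in $\BZ/m\BZ$, with $m_j$ matched. I would derive this from the fact that $\vth$ acts on $\gS(\g)^\g$ through the diagram automorphism of $\bar\vth\in\mathsf{Aut}(\g)/\mathsf{Int}(\g)$, which is defined over $\BQ$: rationality of the $\vth^s$-trace on each $\gS^d(\g)^\g$ forces the multiplicities of $\zeta^r$ and $\zeta^{-r}$ among $\vth$-eigengenerators of degree $d$ to coincide. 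Finally, \eqref{eq:D_th-N-reg} follows from \eqref{eq:raznitsa} by telescoping $\dim\g_i-\dim\g_0=\sum_{a=0}^{i-1}(k_{m-1-a}-k_a)$, summing over $i\in\{1,\dots,m-1\}$, and combining with $\sum_i\dim\g_i=\dim\g$ and $\sum_ik_i=l$. The main obstacle is the eigenvalue identification for $e_j$, where Kostant's section must be paired carefully with the auxiliary $\tilde\vth$; the symmetry of $\{r_j\}$ under negation is a secondary but subtle point.
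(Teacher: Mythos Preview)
The paper does not supply its own proof of this proposition; it is quoted verbatim from \cite[Thm.\,3.3(iv) \& Cor.\,3.4]{p05}. Your outline is correct and is essentially the argument carried out there: a $\vth$-adapted principal $\gt{sl}_2$-triple with $e\in\g_1$, the Kostant decomposition $\g=\bigoplus_j V(2m_j)$, and the identification of the $\vth$-eigenvalue on each highest-weight line of $\g^e$ via the Kostant slice.

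The two delicate points you flag are both handled correctly. For the eigenvalue identification, one must pass from the $\tilde\vth$-equivariance of the \emph{polynomial} isomorphism $\chi\colon e+\g^f\to\bbk^l$ to that of its differential at $e$, which is the actual linear isomorphism $\g^f\simeq\bbk^l$ needed; this works because $d_e\chi$ is block-diagonal with respect to the $h$-weight/degree decomposition, so the multiset of $\tilde\vth$-eigenvalues on each block $(\g^f)_{-2m}$ coincides with $\{\zeta^{-r_j}:m_j=m\}$. For the multiset symmetry $\{m_j+r_j\}=\{m_j-r_j\}$ in $\BZ/m\BZ$, your rationality argument is valid, but note that your own Killing-form step already contains a more direct proof: the $\vth$-invariant perfect pairing between $(\g^e)_{2m}$ and $(\g^f)_{-2m}$ forces their $\vth$-spectra to be mutually negative, while $(\ad e)^{2m}\colon(\g^f)_{-2m}\to(\g^e)_{2m}$ shifts the $\vth$-eigenvalue by $\zeta^{2m}$; combining the two shows that the spectrum on $(\g^e)_{2m}$ is invariant under $b\mapsto 2m-b$, i.e.\ $\{r_j:m_j=m\}=\{-r_j:m_j=m\}$. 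The final telescoping from \eqref{eq:raznitsa} to \eqref{eq:D_th-N-reg} is routine.
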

Clearly, this yields formulae for $\dim\g_i$ with all $i$. 

\noindent
Recall that $D_\vth=\sum_{i=0}^{m-1}i \dim\g_i$. Since $\dim\g_i=\dim\g_{m-i}$ for $i=1,2,\dots,m-1$,
one readily verifies that 
\beq        \label{eq:D_th}
D_\vth=\frac{m}{2}(\dim\g-\dim\g_0) .
\eeq
\begin{lm}   \label{lm:D_th-N-reg}
In the $\gN$-regular case, we have
\[
  D_\vth=\frac{1}{2}\bigl((m-1)\dim\g+\sum_{i=0}^{m-1}(2i+1-m)k_i\bigr)=
  \frac{m}{2}\bigl((m-1)\dim\g_0+\sum_{i=0}^{m-1}(2i+1-m)k_i\bigr).
\]
\end{lm}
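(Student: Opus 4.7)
The proof plan is a short algebraic manipulation combining two results that are already in hand: the general identity \eqref{eq:D_th}, which holds for \emph{any} periodic $\vth$, and the $\gN$-regular dimension formula \eqref{eq:D_th-N-reg} for $\dim\g_0$. No input beyond these two identities is needed, and I do not anticipate a real obstacle.

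First, I would start from \eqref{eq:D_th}, which gives $D_\vth = \tfrac{m}{2}(\dim\g - \dim\g_0)$. Writing \eqref{eq:D_th-N-reg} in the equivalent form
\[
m\dim\g_0 = \dim\g + \sum_{i=0}^{m-1}(m-1-2i)k_i = \dim\g - \sum_{i=0}^{m-1}(2i+1-m)k_i,
\]
I would substitute this into $m\dim\g - m\dim\g_0$ to obtain
\[
m\dim\g - m\dim\g_0 = (m-1)\dim\g + \sum_{i=0}^{m-1}(2i+1-m)k_i,
\]
and dividing by $2$ yields the first claimed formula.

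For the second equality, I would use \eqref{eq:D_th-N-reg} again, this time to eliminate $\dim\g$ in favour of $\dim\g_0$. Setting $S:=\sum_{i=0}^{m-1}(2i+1-m)k_i$, the rearranged identity $\dim\g = m\dim\g_0 + S$ gives $(m-1)\dim\g = m(m-1)\dim\g_0 + (m-1)S$, so that
\[
(m-1)\dim\g + S = m(m-1)\dim\g_0 + mS = m\bigl((m-1)\dim\g_0 + S\bigr),
\]
and halving produces the second expression for $D_\vth$. As a sanity check, the two displayed forms are equivalent modulo \eqref{eq:D_th-N-reg}, which is consistent with both being valid only in the $\gN$-regular setting. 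The entire argument is essentially bookkeeping, and the only place the $\gN$-regular hypothesis enters is through the use of \eqref{eq:D_th-N-reg}.
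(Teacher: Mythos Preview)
Your proposal is correct and follows exactly the approach of the paper, which simply says to substitute the expression for either $\dim\g_0$ or $\dim\g$ from~\eqref{eq:D_th-N-reg} into~\eqref{eq:D_th}. Your write-up just makes the bookkeeping explicit.
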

\begin{proof}
Substitute the expression for either $\dim\g_0$ or $\dim\g$ from~\eqref{eq:D_th-N-reg} into~\eqref{eq:D_th}.
\end{proof}

Our next goal is to obtain an upper bound on the $\vp$-degree of $H_j$ (Section~\ref{subs:theta}). We 
recall the necessary setup, with a more elaborate notation. 
Using the vector space decomposition $\g=\g_0\oplus\g_1\oplus\ldots\oplus\g_{m-1}$, we write $H_j$ as 
the sum of multi-homogeneous components:
\beq   \label{eq:multi-hom}
    H_j=\bigoplus_{\bia} (H_j)_{\bia} ,
\eeq
where $\bia=(i_0,i_1,\dots,i_{m-1})$, \ $i_0+i_1+\dots +i_{m-1}=d_j$, and
\[
  (H_j)_{\bia}\in \gS^{i_0}(\g_0)\otimes \gS^{i_1}(\g_1)\otimes\dots \otimes\gS^{i_{m-1}}(\g_{m-1})
  \subset \gS^{d_j}(\g).
\]
Set $p(\bia)=i_1+2i_2+\dots+(m-1)i_{m-1}$. Then $\vp(t){\cdot}(H_j)_{\bia}=t^{p(\bia)}(H_j)_{\bia}$ and
$\vth((H_j)_{\bia})=\zeta^{p(\bia)}(H_j)_{\bia}$. Recall that $\vth(H_j)=\zeta^{r_j}H_j$. Hence
if $(H_j)_{\bia}\ne 0$, then  $p(\bia)-r_j\equiv 0 \pmod m$. Then 
\begin{itemize}
\item $d_j^\bullet:=\max \{p(\bia)\mid (H_j)_{\bia}\ne 0\}=\deg_\vp(H_j)$ is the $\vp$-{degree} of $H_j$; 
\item $H_j^\bullet$ \ is the sum of all multi-homogeneous components of $H_j$, where $p(\bia)$ is
maximal. 
\end{itemize}

\noindent
Whenever we wish to stress that $d^\bullet_j$ is determined via a certain $\vth$, we write 
$d^\bullet_j(\vth)$ for it. Recall that a set of $\vth$-generators $H_1,\dots, H_l$ is called a {\sf g.g.s.} 
with respect to $\vth$, if  $H_1^\bullet,\dots,H_l^\bullet$ are algebraically independent. 

\noindent
A $\vth$-generator $H_j$ is said to be {\it of type} {\sl (i)}, if $m_j+r_j\equiv i  \!\pmod m$ for
$i\in\{0,1,\dots,m-1\}$.

\begin{lm}    \label{lm:otsenka-d-bullet}
If $H_j$ is of type {\sl (i)}, then $d_j^\bullet\le (m-1)m_j+i$.
\end{lm}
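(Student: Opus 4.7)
The plan is to reduce the inequality to a small modular-arithmetic problem by extracting two elementary constraints on any non-zero multi-homogeneous summand $(H_j)_{\bia}$ in the decomposition~\eqref{eq:multi-hom}: an absolute upper bound on $p(\bia)$, and a congruence modulo $m$.

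First I would record the congruence. Since $\g_k$ is the $\zeta^k$-eigenspace of $\vth$, applying $\vth$ to $(H_j)_{\bia}\in\gS^{i_0}(\g_0)\otimes\cdots\otimes\gS^{i_{m-1}}(\g_{m-1})$ yields $\zeta^{p(\bia)}(H_j)_{\bia}$. On the other hand, $\vth(H_j)=\zeta^{r_j}H_j$. Comparing $\vth$-weights on both sides of~\eqref{eq:multi-hom} therefore forces $p(\bia)\equiv r_j\pmod m$ whenever $(H_j)_{\bia}\ne0$. Second, from $p(\bia)=\sum_{k=0}^{m-1}k\,i_k$ and $\sum_k i_k=d_j$, the trivial estimate $p(\bia)\le(m-1)d_j$ is immediate. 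Hence $d_j^\bullet$ is bounded above by the largest integer $p\le(m-1)d_j$ satisfying $p\equiv r_j\pmod m$.

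The type~$(i)$ hypothesis $m_j+r_j\equiv i\pmod m$ reads $r_j\equiv i-m_j\pmod m$, using the standard convention that $m_j=d_j-1$ is the exponent attached to $H_j$. A direct computation then gives
\[
   (m-1)d_j-r_j\equiv -(m_j+1)-(i-m_j)\equiv -(i+1)\equiv m-1-i\pmod m,
\]
and since $m-1-i\in\{0,1,\dots,m-1\}$, the maximal admissible value of $p$ is
\[
   (m-1)d_j-(m-1-i)=(m-1)(d_j-1)+i=(m-1)m_j+i,
\]
which is the desired bound. I do not anticipate any genuine obstacle; the whole argument amounts to one sentence about $\vth$-eigenvalues and one line of modular arithmetic. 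The only point that requires care is tracking the convention $m_j=d_j-1$, which is precisely what makes the statement compatible with the absolute bound $(m-1)d_j$ coming from the degree.
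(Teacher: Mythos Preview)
Your argument is correct and is essentially the paper's own proof: both extract the two constraints $d_j^\bullet\le(m-1)d_j$ and $d_j^\bullet\equiv r_j\pmod m$, then identify the maximal integer satisfying both. The only cosmetic difference is that the paper names an explicit multi-index $\bja=(0,\dots,0,1,0,\dots,0,m_j)$ with $p(\bja)=(m-1)m_j+i$ and observes it is the unique residue-$r_j$ integer in the window $[(m-1)m_j,(m-1)d_j]$, whereas you reach the same value by a direct computation of $(m-1)d_j-r_j\bmod m$.
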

\begin{proof}
By definition, $d_j^\bullet\le (m-1)d_j$ and $d_j^\bullet\equiv r_j \pmod m$. For the $m$-tuple
\[
      \bja= (\underbrace{0,\dots,0,1}_{i},0,\dots,0,m_j) ,
\]  
we have $p(\bja)=(m-1)m_j+i$ and  $p(\bja)-r_j=m m_j- (m_j+r_j-i) \equiv 0\pmod m$, i.e.,
$(H_j)_{\bja}$ may occur in $H_j$. Since
\[
     (m-1)m_j \le p(\bja)\le (m-1)d_j
\]
and $p(\bja)$ is the unique integer in this interval that is comparable with $r_i$ modulo $m$, we conclude
that $d_j^\bullet \le p(\bja)$.
\end{proof}

\begin{prop}    \label{prop:otsenka-summa}
For any $\vth\in {\sf Aut}^f(\g)$ with $|\vth|=m$, we have
\beq     \label{eq:otsenka-summa}
    \sum_{j=1}^l d_j^\bullet \le \frac{1}{2}\bigl((m-1)\dim\g+\sum_{i=0}^{m-1}(2i+1-m)k_i\bigr) .
\eeq
\end{prop}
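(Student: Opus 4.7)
The proposition is an additive consequence of the pointwise bound in Lemma~\ref{lm:otsenka-d-bullet}, together with the classical formula for the sum of the exponents of $\g$. The plan is simply to sum the individual bounds and rewrite the resulting expression in the form on the right-hand side of~\eqref{eq:otsenka-summa}.

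First, I would recall that the $m_j$ appearing in Lemma~\ref{lm:otsenka-d-bullet} are the exponents of $\g$, i.e.\ $d_j = m_j + 1$, and that summing them gives
\[
   \sum_{j=1}^l m_j \;=\; \tfrac{1}{2}(\dim\g - l),
\]
which holds for reductive $\g$ (the central part contributes degree-$1$ generators with $m_j=0$, so the sum is $|\Delta^+|$ of the semisimple part, and $\dim\g - l$ equals twice this quantity).

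Next, I would group the generators by type. By definition, for each $i\in\{0,1,\dots,m{-}1\}$ there are exactly $k_i$ indices $j$ with $m_j + r_j \equiv i \pmod m$, i.e.\ exactly $k_i$ generators of type $(i)$, and $\sum_{i=0}^{m-1} k_i = l$. Applying Lemma~\ref{lm:otsenka-d-bullet} to every $H_j$ and summing over $j$ according to type, one obtains
\[
   \sum_{j=1}^l d_j^\bullet \;\le\; (m-1)\sum_{j=1}^l m_j + \sum_{i=0}^{m-1} i\,k_i
   \;=\; \tfrac{m-1}{2}(\dim\g - l) + \sum_{i=0}^{m-1} i\,k_i.
\]
Using $l = \sum_{i=0}^{m-1} k_i$, the right-hand side equals
\[
   \tfrac{m-1}{2}\dim\g + \sum_{i=0}^{m-1}\!\Bigl(i - \tfrac{m-1}{2}\Bigr)k_i
   \;=\; \tfrac{1}{2}\!\left((m-1)\dim\g + \sum_{i=0}^{m-1}(2i+1-m)k_i\right),
\]
which is exactly the bound~\eqref{eq:otsenka-summa}.

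There is no real obstacle here: once Lemma~\ref{lm:otsenka-d-bullet} is in place, the proof is a one-line summation followed by an elementary rearrangement. The only point that requires (minor) care is the identification of the $m_j$ as the exponents of the (semisimple part of the) reductive algebra $\g$, so that $\sum_j m_j = \tfrac{1}{2}(\dim\g - l)$, and the accounting of generators by type via the definition of the $k_i$.
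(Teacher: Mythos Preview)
Your proposal is correct and follows essentially the same route as the paper: partition the generators by type, sum the bound from Lemma~\ref{lm:otsenka-d-bullet}, and then use $\sum_j m_j=\tfrac{1}{2}(\dim\g-l)$ together with $l=\sum_i k_i$ to rewrite the result as the right-hand side of~\eqref{eq:otsenka-summa}. The paper's proof is literally the same computation, only slightly more terse.
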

\begin{proof}
Set $\eus P_i=\{j\in [1,l]\mid H_j \text{ is of type {\sl (i)}}\}$. Then 
$\#\eus P_i=k_i$ and $\bigcup_{i=0}^{m-1}\eus P_i=[1,l]$.
By Lemma~\ref{lm:otsenka-d-bullet}, we obtain
\[
   \sum_{j=1}^l d_j^\bullet \le \sum_{i=0}^{m-1}\left(\sum_{j\in\eus P_i}((m-1)m_j+i)\right)=
   (m-1)\sum_{j=1}^l m_j+ \sum_{i=0}^{m-1} i k_i .
\] 
Since $\sum_{j=1}^l m_j=\frac{1}{2}(\dim\g-l)$ and $l=\sum_i k_i$, the last expression is easily being
transformed into the RHS in~\eqref{eq:otsenka-summa}.
\end{proof}
Since $\vec{k}=(k_0,\dots,k_{m-1})$ depends only on $m$ and $\bar\vth$,
the upper bound in Proposition~\ref{prop:otsenka-summa} depends only on the datum of $\vth$.
Let $\mathfrak Y(m,\vec{k})$ denote this upper bound, i.e., the RHS in \eqref{eq:otsenka-summa}.

\begin{thm}      \label{thm:main1}
Suppose that $\vth\in {\sf Aut}^f(\g)$ is $\gN$-regular and $|\vth|=m$. Let $H_1,\dots,H_l$ be an 
arbitrary set of $\vth$-generators in $\gS(\g)^\g$. Then
\begin{itemize}
\item[\sf (1)] \ $d_j^\bullet=(m-1)m_j+i$ for any $H_j$ of type {\sl (i)};
\item[\sf (2)] \ $D_\vth=\sum_{j=1}^l  d_j^\bullet=\mathfrak Y(m,\vec{k})$;
\item[\sf (3)] \  $H_1,\dots,H_l$ is a\/ {\sf g.g.s.} with respect to $\vth$.
\end{itemize}
\end{thm}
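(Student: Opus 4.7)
The plan is to combine three ingredients that have already been set up in Sections~\ref{subs:theta} and~\ref{sect:ggs-&-N-reg}: the general lower bound on $\sum d_j^\bullet$ from Theorem~\ref{thm:kokosik}, the upper bound from Proposition~\ref{prop:otsenka-summa}, and the explicit formula for $D_\vth$ in the $\gN$-regular case from Lemma~\ref{lm:D_th-N-reg}. These three facts fit together in a sandwich that forces every inequality to be an equality.

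More concretely, I would first recall the chain
\[
  D_\vth \;\le\; \sum_{j=1}^l d_j^\bullet \;\le\; \mathfrak Y(m,\vec{k}),
\]
where the left inequality is the first bullet of Theorem~\ref{thm:kokosik} (valid for any $\vth\in {\sf Aut}^f(\g)$) and the right inequality is Proposition~\ref{prop:otsenka-summa}. The point is that these two a priori unrelated bounds actually meet when $\vth$ is $\gN$-regular: by Lemma~\ref{lm:D_th-N-reg}, we have
\[
 D_\vth \;=\; \frac{1}{2}\Bigl((m-1)\dim\g+\sum_{i=0}^{m-1}(2i+1-m)k_i\Bigr) \;=\;\mathfrak Y(m,\vec{k}).
\]
Hence both inequalities are equalities, which gives \textsf{(2)} directly. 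Part \textsf{(3)} then follows from the second bullet of Theorem~\ref{thm:kokosik}, since $\sum d_j^\bullet=D_\vth$ is exactly the criterion for $H_1,\dots,H_l$ to be a {\sf g.g.s.}

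For \textsf{(1)}, I would revisit the proof of Proposition~\ref{prop:otsenka-summa}. That proof established the global upper bound as a sum of individual bounds $d_j^\bullet\le (m-1)m_j+i$ coming from Lemma~\ref{lm:otsenka-d-bullet}, one for each $H_j$ of type $(i)$. Since the total sum $\sum d_j^\bullet$ already equals $\mathfrak Y(m,\vec{k})$ by \textsf{(2)}, none of these individual upper bounds can be strict; equality must hold term by term, which is exactly the assertion in \textsf{(1)}. There is no obstacle here beyond bookkeeping: the main work has already been done in Lemmas~\ref{lm:D_th-N-reg} and~\ref{lm:otsenka-d-bullet} together with Propositions~\ref{prop:dim-g_i} and~\ref{prop:otsenka-summa}; the theorem itself is the clean statement that, for $\gN$-regular $\vth$, the upper bound of Proposition~\ref{prop:otsenka-summa} and the lower bound of Theorem~\ref{thm:kokosik} coincide, forcing the equalities in \textsf{(1)}--\textsf{(3)}.
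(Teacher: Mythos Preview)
Your proposal is correct and follows essentially the same approach as the paper: the sandwich $D_\vth \le \sum_j d_j^\bullet \le \mathfrak Y(m,\vec{k})$ together with the $\gN$-regular identity $D_\vth=\mathfrak Y(m,\vec{k})$ from Lemma~\ref{lm:D_th-N-reg} forces all equalities, and the term-by-term argument for {\sf (1)} via Lemma~\ref{lm:otsenka-d-bullet} is exactly how the paper deduces it. The paper's own proof is more terse but invokes the identical ingredients (Theorem~\ref{thm:kokosik}, Lemma~\ref{lm:D_th-N-reg}, Lemma~\ref{lm:otsenka-d-bullet}, Proposition~\ref{prop:otsenka-summa}) in the same logical order.
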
    
\begin{proof}
For any $\vth\in {\sf Aut}(\g)$, one has $D_\vth\le \sum_{j=1}^l  d_j^\bullet$, see~\cite[Theorem\,3.8]{contr} or Theorem~\ref{thm:kokosik}. 
On the other hand, for an $\gN$-regular $\vth$, combining Lemma~\ref{lm:D_th-N-reg}, 
Lemma~\ref{lm:otsenka-d-bullet}, and Proposition~\ref{prop:otsenka-summa} shows that 
$D_\vth\ge \sum_{j=1}^l  d_j^\bullet$. Therefore, there must be equalities in {\sf (2)} and also in {\sf (1)} 
for $j=1,\dots,l$.

Furthermore, a set of $\vth$-generators $H_1,\dots,H_l$ is a {\sf g.g.s.} with respect to $\vth$ if and only if $D_\vth=\sum_{j=1}^l  d_j^\bullet$, see again~\cite{contr}.
\end{proof}

{\bf Remark.} The point of {\sf (3)} is that if $\vth$ is $\gN$-regular, then {\bf any} set of $\vth$-generators is a {\sf g.g.s.} If $\vth$ is not $\gN$-regular, then it may happen that the property of being {\sf g.g.s.} depends on the choice of $\vth$-generators.

Decomposition~\eqref{eq:multi-hom} provides the bi-homogeneous decomposition 
$H_j=\bigoplus_i H_{j,i}$, where 
\[
   H_{j,i}:=\sum_{\bia:\ p(\bia)=i} (H_j)_\bia .
\]
Then $d_j^\bullet=\max\{i\mid H_{j,i}\ne0\}$ and if $H_{j,i}\ne0$, then $i\equiv r_j\pmod m$. These 
bi-homogeneous decompositions have already been studied in~\cite{py21}. In particular,
the subalgebra of $\gS(\g)$ generated by all bi-homogeneous components $\{H_{j,i}\}$ is {\sf PC} and it 
actually coincides with $\eus Z_\times$, see \cite[Eq. (4.1)]{py21}. 

\begin{thm}    \label{thm:main2}
Let $\vth$ be an $\gN$-regular automorphism of order $m$. Then 
\begin{itemize}
\item[\sf (i)]  \ all possible bi-homogeneous components of all $H_j$ are nonzero, i.e., $H_{j,i}\ne0$ if and
only if \ $0\le i\le d_j^\bullet$ and $i\equiv r_j\pmod m$;
\item[\sf (ii)] \  all these bi-homogeneous components are algebraically independent and therefore
$\eus Z_\times$ is a polynomial algebra;
\item[\sf (iii)] \ $\sum_{j=1}^l \left(\frac{d_j^\bullet-r_j}{m}+1\right)=\bb(\g,\vth)=\trdeg\eus Z_\times$.
\end{itemize}
\end{thm}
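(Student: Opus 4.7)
The plan is to obtain (ii) directly from existing machinery and then to squeeze (i) and (iii) out of a single dimension/counting comparison. First I would dispose of (ii): since $\vth$ is $\gN$-regular, $\g_1$ contains a regular element of $\g$, so $\ind\g_{(0)}=\ind\g=l$ by the result of~\cite[Prop.\,5.3]{p09} recalled in Section~\ref{sect:index-g_0}. Theorem~\ref{thm:main1}{\sf (3)} gives that $H_1,\dots,H_l$ is a~\ggs, so Theorem~\ref{thm:2.3} applies: $\gZ_\times$ is polynomial, freely generated by the nonzero bi-homogeneous components $H_{j,i}$. In particular, $\trdeg\gZ_\times$ equals the total number of such nonzero components.

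Next I would set up the two sides of the comparison. On the one side, Theorem~\ref{thm:2.1} combined with~\cite[Cor.\,3.8]{py21} gives $\trdeg\gZ_\times=\bb(\g,\vth)$. Steinberg's theorem (Section~\ref{subs:kac}) forces $\rk\g_0=\rk\g=l$, so
\[
\bb(\g,\vth)=\tfrac{1}{2}(\dim\g-\dim\g_0)+l=D_\vth/m+l,
\]
using~\eqref{eq:D_th} and $\sum_j d_j^\bullet=D_\vth$ from Theorem~\ref{thm:main1}{\sf (2)}. On the other side, a component $H_{j,i}$ can be nonzero only if $0\le i\le d_j^\bullet$ and $i\equiv r_j\pmod m$ (the $\vth$-eigenvalue constraint on $H_j$); since $d_j^\bullet\equiv r_j\pmod m$ by Theorem~\ref{thm:main1}{\sf (1)}, the admissible range $\{r_j,r_j+m,\dots,d_j^\bullet\}$ has cardinality $(d_j^\bullet-r_j)/m+1$, and summing over $j$ yields the admissible total $\bb(\g,\vth)-\tfrac{1}{m}\sum_{j=1}^l r_j$.

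To finish, I would compare the two sides: the nonzero count is at most the admissible count, so $\bb(\g,\vth)\le\bb(\g,\vth)-\tfrac{1}{m}\sum_j r_j$, forcing $\sum_j r_j\le 0$; since each $r_j\in\{0,1,\dots,m-1\}$ is nonnegative, this means $r_j=0$ for every $j$. Plugging back, the admissible total equals $\bb(\g,\vth)$, which is (iii); and it coincides with the nonzero count, so every admissible $H_{j,i}$ must be nonzero, which is (i). The only genuinely subtle point in the plan is the forced conclusion $r_j=0$ for all $j$: it asserts that every Chevalley generator of $\gS(\g)^\g$ is $\vth$-invariant in the $\gN$-regular case. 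This is automatic when $\vth$ is inner (since $\vth={\sf Ad}(g)$ then acts trivially on $\gS(\g)^\g$), but for outer $\vth$ it is not obvious a priori and emerges here only as a byproduct of the equality between two independently computed counts.
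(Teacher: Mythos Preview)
Your argument has a genuine gap: the claim ``Steinberg's theorem forces $\rk\g_0=\rk\g=l$'' is false when $\vth$ is outer. Steinberg's theorem only gives a $\vth$-\emph{stable} Cartan subalgebra $\te$, not a pointwise-fixed one; the Cartan subalgebra of $\g_0$ is $\te^\vth$, which for outer $\vth$ has dimension $r=\rk\g^\sigma<l$ (see Section~\ref{subsub:out}). Consequently your formula $\bb(\g,\vth)=D_\vth/m+l$ is wrong in the outer case: the correct value is $\bb(\g,\vth)=D_\vth/m+\tfrac{1}{2}(l+\rk\g_0)$. Plugging this into your inequality yields only $\sum_j r_j\le \tfrac{m}{2}(l-\rk\g_0)$, which does not force $r_j=0$.

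Indeed the ``byproduct'' conclusion $r_j=0$ for all $j$ is simply false for outer $\vth$. Take $\g=\mathfrak{sl}_3$ with the diagram involution $\sigma$ (which is $\gN$-regular of order $2$, since it is the unique outer class of order $2$; cf.\ Remark~\ref{rem:N-reg-mnogo}). The generator $H_2=\tr(X^3)$ satisfies $\sigma(H_2)=-H_2$, so $r_2=1\ne 0$. Your counting argument therefore cannot close for outer $\gN$-regular automorphisms. For inner $\vth$ the argument does go through, but there it is circular in spirit: $r_j=0$ is already obvious because an inner automorphism acts trivially on $\gS(\g)^\g$, so the ``forced'' conclusion carries no new information.

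The paper's proof avoids this by not attempting to recompute $\bb(\g,\vth)$: once a {\sf g.g.s.}\ and $\ind\g_{(0)}=\ind\g$ are in hand, parts {\sf (i)}--{\sf (iii)} are read off directly from Theorems~4.3 and~4.6 of~\cite{py21}, which already contain the combinatorics relating the $r_j$, $d_j^\bullet$, and $\bb(\g,\vth)$ in full generality. If you want to repair your approach, you would need to establish independently the identity $\sum_{j=1}^l r_j=\tfrac{m}{2}(l-\rk\g_0)$ for $\gN$-regular $\vth$; with that in hand your comparison of the admissible count with $\bb(\g,\vth)$ becomes an equality and the rest of your argument goes through.
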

\begin{proof}
If $\vth$ is $\gN$-regular, then $\vth$ admits a {\sf g.g.s.}~(Theorem~\ref{thm:main1}) and the 
equality $\ind\g_{(0)}=\ind\g$ holds for the $\vth$-contraction of $\g$~\cite[Prop.\,5.3]{p09}. Therefore, all 
assertions directly follow from Theorems~4.3 and 4.6 in~\cite{py21}.
\end{proof}

There is a strong constraint on the Kac labels of $\gN$-regular inner automorphisms.
\begin{thm}    \label{thm:p_i=0,1}
Suppose that $\vth\in {\sf Int}^f(\g)$ is $\gN$-regular. Then 
\begin{itemize}
\item[\sf (i)] \ $p_i(\vth)\in\{0,1\}$ for all $i$ such that $n_i>1$; 
\item[\sf (ii)] \ if \ $p_i(\vth)>1$ for some $i$ such that $n_i=1$, then $p_j(\vth)=1$ for all other 
$j$. 
\end{itemize}
\end{thm}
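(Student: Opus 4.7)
The plan is to pin down the shape of the Kac diagram of $\vth$ by choosing a Borel adapted to a regular nilpotent element of $\g_1$. Pick $x\in\g_1$ regular nilpotent in $\g$, guaranteed by the $\gN$-regularity. The unique Borel subalgebra $\be'\subset\g$ containing $x$ is $\vth$-stable, since $\vth(x)=\zeta x\in\be'$ forces $\vth(\be')=\be'$ by uniqueness of the Borel containing a regular nilpotent. Steinberg's theorem (invoked in Section~\ref{subs:kac}) then provides a $\vth$-stable Cartan subalgebra $\te'\subset\be'$, on which $\vth$ acts trivially because $\vth$ is inner. I read off the Kac labels $(p_0',p_1',\dots,p_l')$ of $\vth$ with respect to $(\be',\te')$, with corresponding simple roots $\alpha_1',\dots,\alpha_l'$. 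Since $x$ is nilpotent and lies in $\be'$, it belongs to $\ut':=[\be',\be']$, so $x=\sum_{\gamma\in\Delta^+(\be')}c_\gamma e_\gamma$ for certain $c_\gamma\in\bbk$.

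The condition $x\in\g_1$ forces $d(\gamma)\equiv 1\pmod{m}$ on the support of $x$, and the bound $0\le d(\gamma)\le m-p_0'$ from~\eqref{eq:vysota} then leaves only $d(\gamma)=1$. By Kostant's criterion for regular nilpotent elements in a Borel, the image $\bar x\in\ut'/[\ut',\ut']=\bigoplus_{i=1}^l\g^{\alpha_i'}$ must have nonzero projection onto each summand $\g^{\alpha_i'}$; since that projection is $c_{\alpha_i'}e_{\alpha_i'}$, which can be nonzero only when $d(\alpha_i')=p_i'=1$, I conclude $p_i'=1$ for every $i=1,\dots,l$. Thus the Kac diagram of $\vth$ with respect to $(\be',\te')$ has the form $(p_0',1,1,\dots,1)$, with $p_0'=m-\sum_{i=1}^l n_i$ forced by the order~$m$.

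Finally, any two Kac labelings of the same $\vth$ differ by an automorphism of the affine Dynkin diagram $\tilde{\eus D}(\g)$, and every such automorphism permutes the nodes while preserving the marks~$n_i$. Applying a suitable diagram automorphism to $(p_0',1,\dots,1)$ recovers the original labels $(p_0,\dots,p_l)$: the label $p_0'$ lands at some node $k$ with $n_k=n_0=1$, and every other node carries the label~$1$. Both \textsf{(i)} and \textsf{(ii)} drop out at once---no node with $n_i>1$ can carry a label other than $1$, and if some label $p_i>1$ occurs at a node with $n_i=1$, that node must coincide with $k$ and all remaining labels equal~$1$. The main delicate point is the invocation of Kostant's criterion inside the Borel $\be'$ determined by $x$, coupled with the standard fact that the Kac labeling of $\vth$ is well-defined only modulo the action of $\Aut(\tilde{\eus D}(\g))$.
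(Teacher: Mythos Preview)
Your argument is correct and proves both parts in one stroke, but it follows a different path from the paper's proof. The paper treats {\sf (i)} and {\sf (ii)} separately. For {\sf (i)} it removes the node $j$ with $p_j>1$ from $\tilde{\eus D}(\g)$ to obtain a regular reductive subalgebra $\bar\g\subset\g$, observes that $p_j>1$ forces $\bar\g_1=\g_1$, and then derives a contradiction from $\co_{\sf reg}\cap\g^{\bar\vth}\ne\varnothing$ for the nontrivial inner automorphism $\bar\vth$ with $\g^{\bar\vth}=\bar\g$ (the centraliser of a regular nilpotent contains no non-central semisimple elements). For {\sf (ii)} the paper normalises to $p_0>1$ via a diagram automorphism and then, as you do, appeals to Kostant's criterion in the $\BZ$-grading. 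Your approach instead changes Borel to the one determined by $x$ and reads off the Kac labels $(p_0',1,\dots,1)$ directly, then transports back by an automorphism of $\tilde{\eus D}(\g)$. This is more uniform and avoids the regular-subalgebra detour; the paper's route for {\sf (i)}, on the other hand, makes visible the underlying obstruction that $\co_{\sf reg}$ cannot meet a proper fixed-point subalgebra of maximal rank.

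One small point of presentation: Steinberg's theorem as cited in Section~\ref{subs:kac} asserts the existence of \emph{some} $\vth$-stable pair $(\be,\te)$, not of a $\vth$-stable Cartan inside a \emph{given} $\vth$-stable Borel $\be'$. Since $\vth$ is inner this is easy to supply directly: writing $\vth=\mathrm{Ad}(s)$ with $s\in G$ semisimple, the relation $\vth(\be')=\be'$ gives $s\in N_G(B')=B'$, hence $s$ lies in some maximal torus $T'\subset B'$ and $\te'=\mathrm{Lie}(T')$ is fixed pointwise by $\vth$. This also justifies your claim that $\vth$ acts trivially on $\te'$.
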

\begin{proof}  
Let $\co_{\sf reg}$ be the $G$-orbit of regular nilpotent elements. By hypothesis, 
$\co_{\sf reg}\cap\g_1\ne \varnothing$.

{\sf (i)} \ 
Suppose that $p_j(\vth)>1$ for some $j$. Then $\g_1\subset\gN$~\cite[\S\,8.3]{vi76} (this also follows 
from the construction of the $\BZ_m$-grading in Section~\ref{sect:inner-auto}). The subdiagram of 
$\tilde{\eus D}(\g)$ without the $j$-th node gives rise to the regular semisimple subalgebra 
$\bar\g\subset\g$ with a set of simple roots $(\Pi\setminus\{\ap_j\})\cup\{-\delta\}$. Since $p_j(\vth)>1$, 
the induced $\BZ_m$-grading $\bar\g=\bigoplus_{i\in\BZ_m}\bar\g_i$ has the property that 
$\bar\g_1=\g_1$. Hence $\co_{\sf reg}\cap\bar\g\ne\varnothing$. 
On the other hand, $\bar\g$ is the fixed-point subalgebra of $\bar\vth\in {\sf Int}^f(\g)$, where $\bar\vth$
is defined by the Kac labels $p_j(\bar\vth)=1$ and $p_i(\bar\vth)=0$ for all other $i$. Hence 
$|\bar\vth|=n_j$. If $n_j>1$, then $\bar\vth$ is a non-trivial automorphism of $\g$ such that 
$\co_{\sf reg}\cap\g^{\bar\vth}\ne\varnothing$, which is impossible. Indeed, $\bar\vth={\sf Int}(x)$
for some non-central semisimple $x\in G$ and $x\in G^e$ for $e\in \co_{\sf reg}\cap\g^{\bar\vth}$. But
$G^e$ ($e\in\co_{\sf reg}$) contains no non-central semisimple elements. Thus, if $p_j(\vth)>1$, then 
$n_j=1$ and $\bar\g=\g$.

{\sf (ii)} \ Let $\Gamma$ denote the symmetry group of the affine Dynkin diagram $\tilde{\eus D}(\g)$. 
Since $\Gamma$ acts transitively on the set of nodes with $n_i=1$ and $\eus K(\vth)$ is determined up 
to the action of $\Gamma$, we may assume that $j=0$.  The remaining labels $p_1,\dots,p_m$ 
determine a $\BZ$-grading of $\g$ such that $\g(1)=\g_1$ and  $\co_{\sf reg}\cap\g(1)\ne\varnothing$.
Hence the corresponding nilradical $\n=\g({\ge}1)$ also meets $\co_{\sf reg}$. But this is only possible if
$\n=\ut=[\be,\be]$, i.e., $p_i\ge 1$ for $i=1,\dots,l$. Then 
$\g(1)=\bigoplus_{i\in \eus J}\g^{\ap_i}$, where $\eus J=\{i\in \{1,\dots,l\}\mid p_i=1\}$.
By \cite[Theorem\,4]{ko63}, this means that $\eus J=\{1,\dots,l\}$.
\end{proof}

Recall that the {\it Coxeter number\/} of $\g$ is $\mathsf h=\sum_{i=0}^l n_i=1+\sum_{i=1}^l[\delta:\ap_i]$.
\begin{cl}   \label{cor:0,1}
If $\vth$ is $\gN$-regular and $|\vth|\le \mathsf h$, then $p_i(\vth)\le 1$ for all $i$.
\end{cl}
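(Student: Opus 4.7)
The plan is to prove the corollary by contradiction, using Theorem~\ref{thm:p_i=0,1} as the sole input together with the formula $|\vth|=\sum_{i=0}^l p_i n_i$ and the definition $\mathsf h=\sum_{i=0}^l n_i$. Suppose, toward a contradiction, that $p_i(\vth)\ge 2$ for some index $i$.

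First I would invoke part {\sf (i)} of Theorem~\ref{thm:p_i=0,1} to conclude that necessarily $n_i=1$ (otherwise the hypothesis $n_i>1$ would force $p_i\in\{0,1\}$). With $n_i=1$ established, part {\sf (ii)} of the same theorem then applies, and yields $p_j(\vth)=1$ for every $j\ne i$.

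Now I would simply compute the order. Using $n_i=1$, $p_j=1$ for $j\ne i$, and the definition of the Coxeter number,
\[
   |\vth|=\sum_{j=0}^l p_j n_j = p_i n_i+\sum_{j\ne i} n_j = p_i+(\mathsf h - 1)\ge 2+\mathsf h - 1=\mathsf h+1.
\]
This contradicts the assumption $|\vth|\le \mathsf h$, so no such index $i$ can exist and all Kac labels lie in $\{0,1\}$.

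There is essentially no obstacle here: the entire content has already been extracted in Theorem~\ref{thm:p_i=0,1}, and the only remaining ingredient is the elementary arithmetic identity relating $|\vth|$ to the Coxeter number. The corollary is a one-line consequence once the dichotomy in Theorem~\ref{thm:p_i=0,1} is combined with the constraint that at least one label must exceed $1$.
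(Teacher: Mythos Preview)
Your argument is correct and is precisely the intended deduction: the paper states the corollary without proof, treating it as immediate from Theorem~\ref{thm:p_i=0,1} together with the formulas $|\vth|=\sum_i p_i n_i$ and $\mathsf h=\sum_i n_i$. Your contradiction argument spells out exactly this one-line arithmetic.
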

Next result demonstrates another extreme property of $\gN$-regular automorphisms and its relationship with existence of {\sf g.g.s.}
\begin{thm}   \label{thm:main3}
Let $\vth$ and $\vth'$ have the same data (i.e., $|\vth|=|\vth'|$ and they belong to the same connected 
component of\/ ${\sf Aut}(\g)$). Suppose that $\vth$ is $\gN$-regular. Then
\begin{itemize}
\item[\sf (i)] \ $\dim\g^\vth\le \dim\g^{\vth'}$;
\item[\sf (ii)] \  if\/ $\dim\g^\vth= \dim\g^{\vth'}$, then
$\vth'$ also admits a {\sf g.g.s.} for {\it\bfseries any} set of $\vth'$-generators $H_1,\dots,H_l$.
\end{itemize}
\end{thm}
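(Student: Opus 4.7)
The plan is to compare two upper/lower estimates for $\sum_j d_j^\bullet(\vth')$ and observe that $\gN$-regularity makes those estimates coincide for $\vth$. More precisely, I would fix a set of $\vth'$-generators $H_1,\dots,H_l$ in $\gS(\g)^\g$ and sandwich the sum $\Sigma:=\sum_{j=1}^l d_j^\bullet(\vth')$ between two quantities that depend only on the datum $(m,\vec k)$ (shared by $\vth$ and $\vth'$) and the dimension of the fixed-point subalgebra.

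First, from Theorem~\ref{thm:kokosik} applied to $\vth'$ I have the lower bound $\Sigma \ge D_{\vth'}$, and by \eqref{eq:D_th} (which uses only $\dim\g_i=\dim\g_{m-i}$, valid for every periodic $\vth'$) this reads $\Sigma \ge \tfrac{m}{2}(\dim\g - \dim\g^{\vth'})$. Second, Proposition~\ref{prop:otsenka-summa} gives the upper bound $\Sigma \le \mathfrak Y(m,\vec k)$, and this bound depends only on $m$ and $\vec k$, hence only on the common datum of $\vth$ and $\vth'$. Now for the $\gN$-regular $\vth$, Theorem~\ref{thm:main1}(2) upgrades Proposition~\ref{prop:otsenka-summa} to the equality $\mathfrak Y(m,\vec k) = D_\vth = \tfrac{m}{2}(\dim\g - \dim\g^\vth)$.

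Combining these, I get
\[
\tfrac{m}{2}(\dim\g-\dim\g^{\vth'}) \;\le\; \Sigma \;\le\; \mathfrak Y(m,\vec k) \;=\; \tfrac{m}{2}(\dim\g-\dim\g^\vth),
\]
and cancelling $\tfrac{m}{2}\dim\g$ yields $\dim\g^\vth \le \dim\g^{\vth'}$, which is part~{\sf (i)}. For part~{\sf (ii)}, the hypothesis $\dim\g^\vth=\dim\g^{\vth'}$ forces $D_{\vth'}=D_\vth=\mathfrak Y(m,\vec k)$, so all three quantities in the sandwich coincide. In particular $\Sigma=D_{\vth'}$, and the equality criterion in Theorem~\ref{thm:kokosik} gives that $H_1,\dots,H_l$ is a {\sf g.g.s.} with respect to $\vth'$. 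Since the $\vth'$-generators were arbitrary, this proves the statement.

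No step looks genuinely difficult — the whole argument is an exercise in chaining Theorems~\ref{thm:kokosik}, \ref{thm:main1} and Proposition~\ref{prop:otsenka-summa}. The only subtlety to verify carefully is that $\mathfrak Y(m,\vec k)$ really is a datum-invariant, i.e.\ that the vector $\vec k=(k_0,\dots,k_{m-1})$ depends only on $m$ and on the image $\bar\vth\in {\sf Aut}(\g)/{\sf Int}(\g)$ (this is exactly the observation made just before the definition of a datum), so both $\vth$ and $\vth'$ give rise to the same right-hand side in Proposition~\ref{prop:otsenka-summa}. Once that is in place, {\sf (i)} and {\sf (ii)} fall out of the sandwich with no further work.
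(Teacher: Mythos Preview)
Your argument is correct and matches the paper's proof essentially line for line: the same sandwich $D_{\vth'}\le \sum_j d_j^\bullet(\vth')\le \mathfrak Y(m,\vec k)=D_\vth$, followed by the formula $D_\vth=\tfrac{m}{2}(\dim\g-\dim\g^\vth)$ for part~{\sf (i)} and the equality criterion in Theorem~\ref{thm:kokosik} for part~{\sf (ii)}. Your remark on the datum-invariance of $\mathfrak Y(m,\vec k)$ is also exactly the point the paper records right after Proposition~\ref{prop:otsenka-summa}.
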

\begin{proof} Previous results of this section and \cite[Theorem\,3.8]{contr} imply that
\[
  D_{\vth'}\le \sum_{j=1}^l d_j^\bullet (\vth')\le \mathfrak Y(m,\vec{k}) =D_\vth .
\]
Since $D_\vth=\frac{m}{2}(\dim\g-\dim\g^\vth)$ for any $\vth$, we get {\sf (i)}. The above relation also 
implies that if $\dim\g^\vth= \dim\g^{\vth'}$, then  
$D_{\vth'}= \sum_{j=1}^j d_j^\bullet (\vth')=\mathfrak Y(m,\vec{k})$, and we can again refer to~\cite{contr}.
\end{proof}

\begin{rmk}
It can happen that $\sum_{j=1}^l d_j^\bullet (\vth')<\mathfrak Y(m,\vec{k})$, but still
$D_{\vth'}= \sum_{j=1}^l d_j^\bullet (\vth')$, i.e., $\vth'$ admits a {\sf g.g.s.}. If this happens to be the 
case, then not every set of $\vth'$-generators forms a {\sf g.g.s.}, and one has to make a right choice.
It is known that {\bf all} involutions of the classical Lie algebras admit a {\sf g.g.s.} regardless of 
$\gN$-regularity~\cite{contr}, and there are exactly four involutions for exceptional Lie algebras of type 
$\GR{E}{n}$  that do not admit a {\sf g.g.s.}~\cite{Y-imrn}.
\end{rmk}
The equality occurring in Theorem~\ref{thm:main3}{\sf (ii)} is not rare. 
Such non-conjugate pairs $(\vth,\vth')$ do exist for $m\ge 3$.
\begin{df}     \label{def:friendly}
We say that two non-conjugate automorphisms $\vth,\tilde\vth$ form a {\it friendly pair}, if they 
have the same data, $\vth$ is $\gN$-regular, and $\dim\g^\vth= \dim\g^{\tilde\vth}$.
\end{df}

Together with presence of {\sf g.g.s.}, the members of a friendly pair share other good properties.
To distinguish the $\BZ_m$-gradings for
$\vth$ and $\tilde\vth$, we write $\g=\bigoplus_{i=0}^{m-1}\g_i$ for $\vth$ (which is $\gN$-regular) 
and $\g=\bigoplus_{i=0}^{m-1}\tilde\g_i$ for $\tilde\vth$.

\begin{prop}
Let $(\vth,\tilde\vth)$ be a friendly pair. Then 
\begin{itemize}
\item[\sf (i)] \ $\dim\tilde\g_1\md {\tilde G}_0=\dim\g_1\md G_0=k_{m-1}$;
\item[\sf (ii)] \ if $\tilde H_1,\dots,\tilde H_l$ is any set of $\tilde\vth$-generators, then
$\{\tilde H_j\vert_{\tilde\g_1 }\mid j\in \eus P_{m-1}\}$ is a system of parameters 
in\/ $\bbk[\tilde\g_1]^{\tilde G_0}$.
\end{itemize}
\end{prop}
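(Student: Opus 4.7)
The plan is to reduce both assertions to the g.g.s.\ granted by Theorem~\ref{thm:main3}(ii) via the identification of the top bi-homogeneous component $\tilde H_j^\bullet$ with the restriction $\tilde H_j\vert_{\tilde\g_1}$ for $j\in\eus P_{m-1}$. Write $\tilde d_j=\deg\tilde H_j$. First I would unpack the proof of Theorem~\ref{thm:main3}(ii): the friendly-pair condition gives $D_{\tilde\vth}=D_\vth=\mathfrak Y(m,\vec k)$, so the chain
\[
  D_{\tilde\vth}\;\le\;\sum_{j=1}^l d_j^\bullet(\tilde\vth)\;\le\;\mathfrak Y(m,\vec k)
\]
consists of equalities. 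By Lemma~\ref{lm:otsenka-d-bullet}, this forces $d_j^\bullet(\tilde\vth)=(m-1)(\tilde d_j-1)+i$ for every $\tilde H_j$ of type $i$. Specialising to $j\in\eus P_{m-1}$ yields $d_j^\bullet=(m-1)\tilde d_j$, the absolute maximum of $p(\bia)$ among multi-indices $\bia=(i_0,\ldots,i_{m-1})$ with $\sum_k i_k=\tilde d_j$; moreover, this maximum is attained uniquely at $\bia=(0,\ldots,0,\tilde d_j)$. Thus the single bi-homogeneous component $\tilde H_j^\bullet=(\tilde H_j)_{(0,\ldots,0,\tilde d_j)}\in\gS^{\tilde d_j}(\tilde\g_{m-1})$ is nonzero for $j\in\eus P_{m-1}$.

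Next I would use the Killing form $\kappa$ to identify $\tilde\g_{m-1}$ with $\tilde\g_1^*$ (this works because $\kappa(\tilde\g_i,\tilde\g_j)=0$ unless $i+j\equiv 0\pmod m$). Under the induced isomorphism $\gS(\tilde\g_{m-1})\simeq\bbk[\tilde\g_1]$, the pure $\tilde\g_{m-1}$-component $(\tilde H_j)_{(0,\ldots,0,\tilde d_j)}$ corresponds precisely to the restriction $\tilde H_j\vert_{\tilde\g_1}$; hence $\tilde H_j\vert_{\tilde\g_1}=\tilde H_j^\bullet$ is nonzero for $j\in\eus P_{m-1}$. A short congruence check (the component $(\tilde H_j)_{(0,\ldots,0,\tilde d_j)}$ has $p(\bia)=(m-1)\tilde d_j\equiv -\tilde d_j\pmod m$, which must match $r_j$, forcing $\tilde H_j$ to be of type $m-1$) shows $\tilde H_j\vert_{\tilde\g_1}=0$ for $j\notin\eus P_{m-1}$. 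Consequently $\psi_1(\bbk[\g]^G)=\bbk[\tilde H_j\vert_{\tilde\g_1}:j\in\eus P_{m-1}]$.

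Finally, Theorem~\ref{thm:main3}(ii) ensures that $\tilde H_1^\bullet,\ldots,\tilde H_l^\bullet$ are algebraically independent, so the subfamily $\{\tilde H_j\vert_{\tilde\g_1}:j\in\eus P_{m-1}\}$ of exactly $k_{m-1}$ elements is algebraically independent too. By Proposition~\ref{prop:vklad}(i), $\bbk[\tilde\g_1]^{\tilde G_0}$ is integral over $\psi_1(\bbk[\g]^G)$, and integrality preserves Krull dimension. This yields $\trdeg\,\bbk[\tilde\g_1]^{\tilde G_0}=k_{m-1}$, proving~(i), and simultaneously makes those $k_{m-1}$ restrictions a homogeneous system of parameters in $\bbk[\tilde\g_1]^{\tilde G_0}$, proving~(ii). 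The only delicate point is the Killing-form identification linking $\tilde H_j^\bullet$ with $\tilde H_j\vert_{\tilde\g_1}$; everything else is bookkeeping.
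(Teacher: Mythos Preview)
Your proof is correct and follows essentially the same route as the paper's: identify $\tilde H_j^\bullet$ with $\tilde H_j\vert_{\tilde\g_1}$ for $j\in\eus P_{m-1}$ via the Killing-form pairing $\tilde\g_{m-1}\simeq\tilde\g_1^*$, invoke the g.g.s.\ from Theorem~\ref{thm:main3}(ii) to get algebraic independence of those restrictions, and finish with the integrality statement of Proposition~\ref{prop:vklad}(i). The only cosmetic difference is that the paper cites Proposition~\ref{prop:vklad} directly for the vanishing of $\tilde H_j\vert_{\tilde\g_1}$ when $j\notin\eus P_{m-1}$, whereas you redo that congruence computation; and the paper records the $\gN$-regular half of (i) by pointing to Theorem~\ref{thm:P05} rather than rerunning the argument.
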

\begin{proof}
If $H_1,\dots,H_l$ is any set of $\vth$-generators, then the polynomials 
$\{H_j\vert_{\g_1}\mid j\in \eus P_{m-1}\}$ freely generate $\bbk[\g_1]^{G_0}$ 
(see~\cite[Theorem\,3.5]{p05} or Theorem~\ref{thm:P05}). Therefore, we only have to prove the assertions related to $\tilde\vth$. 

We assume below that $\tilde H_1,\dots,\tilde H_l$ is a set of $\tilde\vth$-generators.
It is shown in Proposition~\ref{prop:vklad} that if $j\not\in \eus P_{m-1}$, then 
$\tilde H_j\vert_{\tilde\g_1}=0$. On the other hand, since $\tilde H_1,\dots,\tilde H_l$ is a g.g.s. with 
respect to $\tilde\vth$, one has 
\[
   d_j^\bullet=(m-1)m_j+m-1=(m-1)d_j \ \text{ for } \ j\in\eus P_{m-1} .
\]
Therefore, $\tilde H_j^\bullet=(\tilde H_j)_\bia$ with $\bia=(0,\dots,0,d_j)$. Hence 
$\tilde H_j^\bullet\in\eus S^{d_j}(\g_{m-1})$, and the latter is the set of polynomial functions of degree 
$d_j$ on $\g_1\simeq (\g_{m-1})^*$. In other words, $\tilde H_j^\bullet$ is obtained as follows. We first 
take $\tilde H_j\vert_{\g_1}=\psi_1(\tilde H_j)$ and then consider it as function on the whole of $\g$ via 
the projection $\g\to \g_1$.

Because $\tilde H_1^\bullet,\dots,\tilde H_l^\bullet$ are algebraically independent in $\eus S(\g)$, we 
obtain that $\{\tilde H_j\vert_{\tilde\g_1 }\mid j\in \eus P_{m-1}\}$ are algebraically independent in 
$\eus S(\g_{m-1})=\bbk[\g_1]$. The rest follows from Proposition~\ref{prop:vklad}.
\end{proof}

\begin{rmk}   \label{rem:ne-znaem}
(1) For a friendly pair $(\vth,\tilde\vth)$, the polynomials 
$\{\tilde H_j\vert_{\tilde\g_1 }\mid j\in \eus P_{m-1}\}$ do not always generate 
$\bbk[\tilde\g_1]^{\tilde G_0}$. 

(2) Although $\tilde\vth$ admits a {\sf g.g.s.}~(Theorem~\ref{thm:main3}), we do not know in general whether the $\tilde\vth$-contraction of $\g$ has the same index as $\g$. 
\end{rmk}

\subsection{How to determine $\eus K(\vth)$ for $\gN$-regular inner automorphisms}
We provide some hints that are sufficient in most cases.

\textbullet\quad If $m\ge {\sf h}$, then $p_i(\vth)=1$ for $i=1,\dots,l$ and $p_0=m+1-{\sf h}$.

\textbullet\quad Suppose that $m <{\sf h}$.
\\
{\bf --} \ Since $p_i(\vth)\in\{0,1\}$ (Corollary~\ref{cor:0,1}), it suffices to determine the subset $J\subset
\{0,1,\dots,l\}$ such that $p_j=1$ if and only if $j\in J$. The obvious condition is that $\sum_{j\in J}n_j=m$.
If there are several possibilities for such $J$, then one can compare $\dim\g_0$ and $\dim\g_1$
obtained from these $J$ with those required by Proposition~\ref{prop:dim-g_i}.
\\
{\bf --} \ For any $m\in\BN$, there is an explicit construction of an $\gN$-regular inner $\vth$ with $|\vth|=m$. 
Let $\g=\bigoplus_{i\in\BZ}\g(i)$ be the standard $\BZ$-grading. This means that $\te\subset\g(0)$ and
$\g(1)=\bigoplus_{\ap\in\Pi}\g^\ap$. Then $\g^\gamma\subset \g(\hot(\gamma))$ for any 
$\gamma\in\Delta$, where $\hot(\gamma)=\sum_{\ap\in\Pi}[\gamma:\ap]$. Here $\co_{\sf reg}\cap\g(1)$
is dense in $\g(1)$. Hence glueing this $\BZ$-grading module $m$ yields the unique, up to $G$-conjugacy, 
$\gN$-regular $\vth$ of order $m$. For $m< {\sf h}$, this construction does not allow us to see the Kac 
labels of $\vth$. Nevertheless, one easily determines $\g_0$, because the root system of $[\g_0,\g_0]$ is
$\Delta^{(m)}=\{\gamma\in \Delta\mid \hot(\gamma)\in m\BZ\}$. This gives a strong constraint on possible 
subsets $J$.
\\
{\bf --} \ To realise that $\vth$ is not $\gN$-regular, one can use Theorem~\ref{thm:P05}(i). That is, if 
$\bbk[\g_1]^{G_0}$ has a free generator of degree that does not belong to 
$\{d_j \mid j\in \eus P_{m-1}\}$, then $\vth$ cannot be $\gN$-regular.

In our examples of friendly pairs, the Kac 
labels belong to $\{0,1\}$, and the zero labels are omitted. Let 
$\overrightarrow{\dim}(\vth)$ be the vector $(\dim\g_0,\dim\g_1,\dots,\dim\g_{m-1})$ 
for $\vth$ with $|\vth|=m$. The numbers $\dim\g_0$ and $\dim\g_1$ can directly be read off the Kac 
diagram, see Section~\ref{subs:g_0}. Since $\dim\g_i=\dim\g_{m-i}$ for $i\ne 0$, the knowledge of
$\dim\g_0$ and $\dim\g_1$ is sufficient for obtaining $\overrightarrow{\dim}(\vth)$, if $m\le 5$. 
The Lie algebra of an $n$-dimensional algebraic torus is denoted by $\te_n$.

\begin{ex}   \label{ex:friendly-pairs} 
{\bf 1}$^o$. For $\g$ of type $\GR{E}{7}$, we consider the following inner automorphisms:  

$\eus K(\vth)$ :\quad \raisebox{-3.8ex}{\begin{tikzpicture}[scale= .70, transform shape]
\tikzstyle{every node}=[circle, draw, fill=white!55] 
\node (h) [label=above:{\it\bfseries 1}] at (-1.1,0) {};
\node (a) at (0,0) {};
\node (b)  at (1.1,0) {};
\node (c)  at (2.2,0) {};
\node (d) [label=above:{\it\bfseries 1}] at (3.3,0) {};
\node (e) at (4.4,0) {};
\node (f)  at (5.5,0) {};
\node (g) at (2.2,-1.1) {};
\tikzstyle{every node}=[circle, draw, fill=violet!55] 
\foreach \from/\to in {h/a, a/b, b/c, c/d, d/e, e/f, c/g}  \draw[-] (\from) -- (\to);
\end{tikzpicture}}  \qquad
$\eus K(\vth')$ :\quad \raisebox{-3.8ex}{\begin{tikzpicture}[scale= .70, transform shape]
\tikzstyle{every node}=[circle, draw, fill=white!55] 
\node (h) at (-1.1,0) {};
\node (a) at (0,0) {};
\node (b) at (1.1,0) {};
\node (c)  [label=above:{\it\bfseries 1}] at (2.2,0) {};
\node (d) at (3.3,0) {};
\node (e) at (4.4,0) {};
\node (f)  at (5.5,0) {};
\node (g) at (2.2,-1.1) {};
\tikzstyle{every node}=[circle, draw, fill=violet!55] 
\foreach \from/\to in {h/a, a/b, b/c, c/d, d/e, e/f, c/g}  \draw[-] (\from) -- (\to);
\end{tikzpicture}} 

\vspace{.8ex}
Then $\g^\vth=\GR{A}{4}\oplus\GR{A}{2}\oplus\te_1$, \ 
$\g^{\vth'}=\GR{A}{3}\oplus\GR{A}{3}\oplus\GR{A}{1}$, \ 
$\vth$ is $\gN$-regular and $|\vth|=|\vth'|=4$. 
Here
\\[.6ex] 
\centerline{
$\overrightarrow{\dim}(\vth)=(33,35,30,35)$ \ and \ $\overrightarrow{\dim}(\vth')=(33,32,36,32)$.
}
\\[.6ex]
Therefore $(\vth,\vth')$ is a friendly pair and $\vth'$ also admits a {\sf g.g.s.} 

{\bf 2}$^o$. For $\g$ of type $\GR{E}{6}$, we consider the following inner automorphisms of order $4$:

\centerline{
$\eus K(\vth)$ :\quad \raisebox{-3.8ex}{\begin{tikzpicture}[scale= .70, transform shape]
\tikzstyle{every node}=[circle, draw, fill=white!55] 
\node (a) at (0,0) {};
\node (b) at (1.1,0) {};
\node (c) [label=above:{\it\bfseries 1}] at (2.2,0) {};
\node (d) at (3.3,0) {};
\node (e) at (4.4,0) {};
\node (f)  at (2.2,-1.1) {};
\node (g) [label=left:{\it\bfseries 1}] at (2.2,-2.2) {};
\tikzstyle{every node}=[circle, draw, fill=violet!55] 
\foreach \from/\to in {a/b, b/c, c/d, d/e, c/f, f/g}  \draw[-] (\from) -- (\to);
\end{tikzpicture}}  \qquad
$\eus K(\vth')$ :\quad \raisebox{-3.8ex}{\begin{tikzpicture}[scale= .70, transform shape]
\tikzstyle{every node}=[circle, draw, fill=white!55] 
\node (a) [label=above:{\it\bfseries 1}] at (0,0) {};
\node (b) at (1.1,0) {};
\node (c) at (2.2,0) {};
\node (d)  at (3.3,0) {};
\node (e) [label=above:{\it\bfseries 1}] at (4.4,0) {};
\node (f)  [label=left:{\it\bfseries 1}] at (2.2,-1.1) {};
\node (g) at (2.2,-2.2) {};
\tikzstyle{every node}=[circle, draw, fill=violet!55] 
\foreach \from/\to in {a/b, b/c, c/d, d/e, c/f, f/g}  \draw[-] (\from) -- (\to);
\end{tikzpicture}}
}
\vskip1ex \noindent
Then $\g^\vth=\GR{A}{2}\oplus\GR{A}{2}\oplus\GR{A}{1}\oplus\te_1$ and 
$\g^{\vth'}=\GR{A}{3}\oplus\GR{A}{1}\oplus\te_2$.
Here $\vth$ is $\gN$-regular and
$\overrightarrow{\dim}(\vth)=\overrightarrow{\dim}(\vth')=(20,20,18,20)$.

{\bf 3}$^o$. For $\g=\mathfrak{sl}_{4n}$, $n\ge2$, we consider two {\bf outer} automorphisms of order $4$. The corresponding twisted affine Dynkin diagram is $\GR{A}{4n-1}^{(2)}$. It has $2n+1$ nodes.
\\[.6ex]  \indent
$\eus K(\vth)$ : \raisebox{-2.6ex}{\begin{tikzpicture}[scale= .70, transform shape]
\tikzstyle{every node}=[circle, draw, fill=white!55] 
\node (a) [label=left:{\it\bfseries 1}] at (0,.6) {};
\node (b) at (0,-.6) {};
\node (c) at (1.1,0) {};
\node (d) at (2.2,0) {};
\node (f) at (4.4,0) {};
\node (g) [label=above:{\it\bfseries 1}] at (5.5,0) {};
\tikzstyle{every node}=[circle] 
\node (e) at (3.3,0) {$\cdots$};
\node (h) at (4.72,0) {${\bf <}$};
\foreach \from/\to in {a/c, b/c, c/d, d/e, e/f}  \draw[-] (\from) -- (\to);
\draw (4.75, .07) -- +(.55,0);
\draw (4.75, -.07) -- +(.55,0);
\end{tikzpicture}} 
      \qquad
$\eus K(\vth')$ :\quad \raisebox{-2.8ex}{\begin{tikzpicture}[scale= .70, transform shape]
\tikzstyle{every node}=[circle, draw, fill=white!55] 
\node (a) at (0,.6) {};
\node (b) at (0,-.6) {};
\node (c) at (1.1,0) {};
\node (e) at (3.3,0) {};
\node (f) [label=above:{\it\bfseries 1}] at (4.4,0) {};
\node (g)  at (5.5,0) {};
\node (i)  at (7.7,0) {};
\node (j) at (8.8,0) {};
\tikzstyle{every node}=[circle] 
\node (d) at (2.2,0) {$\cdots$};
\node (h) at (6.6,0) {$\cdots$};
\node (k) at (8.02,0) {${\bf <}$};
\foreach \from/\to in {a/c, b/c, c/d, d/e, e/f, f/g, g/h, h/i}  \draw[-] (\from) -- (\to);
\draw (8.05, .07) -- +(.55,0);
\draw (8.05, -.07) -- +(.55,0);
{\color{darkblue}\draw (4.9, 0.3) -- (4.9,-0.8);}
{\color{darkblue}\draw (9.2, 0.3) -- (9.2,-0.8);}
\draw[<->] (5,-.5) -- (9.1,-.5) node[pos=.5,below] {\footnotesize $n$};
\end{tikzpicture}} 
\\[1ex]
Then $\g^{\vth}=\mathfrak{gl}_{2n}$ and $\g^{\vth'}=\spn\oplus\sone$. Here $\vth$ is $\gN$-regular,
and $\overrightarrow{\dim}(\vth)=\overrightarrow{\dim}(\vth')=(4n^2,4n^2,4n^2-1,4n^2)$. A similar example 
can be given for $\mathfrak{sl}_{4n-2}$.

{\bf 4}$^o$. A general idea is that if $\gcd(i,|\vth|)=1$, then $|\vth|=|\vth^i|$ and $\g^\vth=\g^{\vth^i}$.  Then it is not hard to provide examples, where $\vth$ and $\vth^i$ are not $G$-conjugate. For $|\vth|=5$, 
the dimension vector is of the form $\overrightarrow{\dim}(\vth)=(a,b,c,c,b)$ and hence 
$\overrightarrow{\dim}(\vth^2)=(a,c,b,b,c)$. Therefore, if $b\ne c$, then $\vth$ and $\vth^2$ are not 
$G$-conjugate, while $\dim\g^\vth=\dim\g^{\vth^2}=a$. For instance, this applies if $\g$ is of type $\GR{E}{6}$ 
and $\vth$ is $\gN$-regular, where $\overrightarrow{\dim}(\vth)=(16,16,15,15,16)$. 
\end{ex}

\end{document}